\numberwithin{equation}{section}
\newtheorem{thm}{Theorem}[section]
\newtheorem{lem}[thm]{Lemma}
\theoremstyle{definition}
\newtheorem{defn}[thm]{Definition}
\newtheorem{ques}[thm]{Question}
\newtheorem{rem}[thm]{Remark}
\newcommand{\bbc}{\mathbb{C}}
\newcommand{\bbs}{\mathbb{S}}
\newcommand{\bbr}{\mathbb{R}}
\newcommand{\bbn}{\mathbb{N}}
\newcommand{\bbz}{\mathbb{Z}}
\newcommand{\degg}{\mathrm{deg}}
\newcommand{\fix}{\mathrm{Fix}}
\newcommand{\per}{\mathrm{Per}}
\newcommand{\lift}{\mathcal{L}}
\newcommand{\calc}{\mathcal{C}}
\newcommand{\calu}{\mathcal{U}}
\newcommand{\calv}{\mathcal{V}}
\newcommand{\htop}{h_{\mathrm{top}}}
\newcommand{\css}{C(\bbs,\bbs)}
\newcommand{\lcm}{\mathrm{lcm}}
\begin{document}

\title{Some properties of circle maps with zero topological entropy}
\author{Yini Yang} 
\address{Department of Mathematics,
	Shantou University, Shantou, Guangdong, 515063, China}
\email{ynyangchs@foxmail.com}
\subjclass[2010]{37E10, 37B40, 54H20}

\begin{abstract}
For a circle map $f\colon\bbs\to\bbs$ with zero topological entropy,
we show that
a non-diagonal pair $\langle x,y\rangle\in \bbs\times \bbs$ is non-separable if and only if it is an IN-pair if and only if it is an IT-pair.
We also show that if a circle map is topological null then the maximal pattern entropy of every open cover is of polynomial order.
\end{abstract}

\keywords{circle maps; non-separable pair; IN-pair; IT-pair; 
	topological null; maximal pattern entropy}

\maketitle

\section{Introduction}
The study of the complexity of dynamical system is still an important topic nowadays. 
Different versions of chaos were proposed to represent the complexity in various senses. We refer the reader to the survey \cite{LY16} and references therein for more aspects and details.

For interval maps, there are many known results.
Here we want to name a few. 
In 1975, in their seminal work \cite{LT75}, Li and Yorke showed that
if an interval maps has a periodic point with period $3$ then it is chaotic. 
Since then, the phenomenon introduced in \cite{LT75} is called 
Li-Yorke chaos.
In 1986, Smital \cite{Smital 1986} showed that for an interval map with zero topological entropy, the map is Li-Yorke chaotic if and only if it has a non-separable pair.
In 1989, Kuchta and Smital \cite{M. Kuchta 1989} showed that for a continuous interval map, one scrambled pair
implies Li-Yorke chaos. 
In 1991, Franzova and Smital \cite{N. Franzova 1991} 
showed that a interval map is Li-Yorke chaotic if and only if it has positive topological sequence entropy. 
We refer the reader to the book \cite{R17} for more details on this topic.

Some of the results for interval maps 
have been extended to circle maps or even graph maps. 
In 1990, Kuchta \cite{M. Kuchta 1990} showed that for a circle map, one scrambled pair implies Li-Yorke chaos.
In 2000, Hirc \cite{R. Hric 2000} showed that a circle map is Li-Yorke chaotic if and only if it has positive
topological sequence entropy.
In 2014, Ruette and Snoha \cite{S. Ruette 2014} showed that for a graph map, one scrambled pair implies Li-Yorke chaos.
In 2017, Li, Oprocha, Yang and Zeng \cite{Li J 2017} showed that 
a graph map is Li-Yorke chaotic if and only if it has positive topological sequence entropy. 

In the framework of so-called "local entropy theory", 
lots of notions were introduced to describe specific dynamical properties, see \cite{GY09} for a recent survey.
Among them there are IN-pairs and IT-pairs.
In 2011, Li \cite{Li J 2011} proved that for an interval map with zero topological entropy, a non-diagonal pair is non-separable which is related to Li-Yorke chaos if and only if it is an IN-pair if and only if it is an IT-pair. 
In 2017, Li, Oprocha, Yang and Zeng \cite{Li J 2017} proved that a graph map is Li-Yorke chaotic if and only if it has an IN-pair if and only if it has an IT-pair. 
The authors in \cite{Li J 2017} also proposed an open question as follows. 

\begin{ques}
Let $f\colon G\rightarrow G$ be a graph map with zero topological entropy. Is it true that:
$\left\langle x, y\right\rangle $ is an IN-pair if and only if $\left\langle x, y\right\rangle $ is an IT-pair?
\end{ques}

One of the main results of this paper is to answer this question for circle maps. 
Actually, we have the following theorem.

\begin{thm}\label{thm:main-result1}
	Let $f\colon \bbs\to\bbs$ be a circle map with zero topological entropy and $x\neq y\in \bbs$. Then the following conditions are equivalent:
	\begin{enumerate}
	\item $\left\langle x, y\right\rangle$  is a non-separable pair;
	\item $\left\langle x, y\right\rangle$ is an IT-pair;
	\item $\left\langle x, y\right\rangle$ is an IN-pair.
	\end{enumerate}
\end{thm}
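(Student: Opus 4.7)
The plan is to establish the cyclic chain of implications $(2) \Rightarrow (3) \Rightarrow (1) \Rightarrow (2)$. The implication $(2) \Rightarrow (3)$ is immediate from the definitions: an IT-pair admits an infinite independence set for every choice of neighborhoods, which automatically yields independence sets of arbitrarily large finite size and hence the IN-pair property.

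For $(3) \Rightarrow (1)$, I would argue the contrapositive. Suppose $\langle x, y\rangle$ is separable, meaning there is a periodic orbit $P$ that separates the orbits of $x$ and $y$ in Smital's sense, so that for large enough iterates $f^n(x)$ and $f^n(y)$ consistently lie in distinct arcs of $\bbs\setminus P$. Choose disjoint neighborhoods $U\ni x$ and $V\ni y$ small enough to respect this separation. Because $f$ has zero topological entropy, the action of $f$ on the arcs of $\bbs\setminus P$ is essentially cyclic, so the preimages $f^{-j}(U)$ and $f^{-j}(V)$ are confined to disjoint, combinatorially restricted collections of arcs. Consequently, for sufficiently large candidate independence sets $J$, certain ``mixed'' assignments $\omega\colon J\to\{U,V\}$ cannot be realized by any single point, contradicting the IN-pair property.

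The substantive implication is $(1)\Rightarrow(2)$, which I would split according to whether $f$ has a periodic point. If $f$ has no periodic point, the classical theory of degree-one zero-entropy circle maps shows $f$ is monotone and semi-conjugate to an irrational rotation via a monotone factor map $h\colon\bbs\to\bbs$; in this regime a non-separable pair with $x\neq y$ must lie in a common $h$-fiber (or in an arc of wandering intervals), and the minimality of the rotation factor allows one to construct an infinite IT-independence set explicitly by tracking returns of large fibers/arcs into prescribed neighborhoods. If $f$ has a periodic point, I would cut the circle along its orbit $P$, obtaining finitely many closed arcs $I_1,\dots,I_k$ that are cyclically permuted by $f$; a suitable power $f^N$ then fixes each arc and restricts to an interval map of zero topological entropy. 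The non-separability of $\langle x,y\rangle$ under $f$ should descend, after finitely many iterates, to non-separability under $f^N|_{I_j}$ for an appropriate arc. Li's 2011 theorem for interval maps then yields an infinite IT-independence set for this restriction, which one lifts to an IT-independence set for $f$ by rescaling indices by $N$ and shrinking the neighborhoods.

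The main obstacle lies in the rigorous handling of the reduction and lifting across the two regimes. One must carefully verify that non-separability survives the cutting procedure at $P$, that an IT-independence set for $f^N|_{I_j}$ genuinely yields an IT-independence set for the original map $f$ on $\bbs$ (rather than only for $f^N$), and that degenerate configurations where $x$ or $y$ is itself a periodic point or lies on the cutting orbit $P$ are accommodated, for instance by perturbing the choice of $P$ or by working with half-open arcs. The periodic-point-free case likewise demands care: one must argue that non-separable pairs with $x\neq y$ really exist in this setting only under restrictive conditions dictated by the rotation semi-conjugacy, and that the explicit independence construction respects the rigid asymptotic behavior imposed by the rotation.
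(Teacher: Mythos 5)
There is a genuine gap, and in fact the most serious problem is in your periodic-point-free case. There you propose to prove $(1)\Rightarrow(2)$ by explicitly constructing an infinite independence set from returns of fibers of the semi-conjugacy onto the irrational rotation. This cannot succeed: by Hric's theorem (Lemma~\ref{lem:f-entropy-no-period-point}) a circle map without periodic points is topological null, and by Kerr--Li (Theorem~\ref{thm:null-IN}) a null system has no non-diagonal IN-pairs, hence no non-diagonal IT-pairs. A Denjoy-type map is tame, and disjoint neighborhoods of $x\neq y$ never carry arbitrarily large independence sets. The correct resolution, and the one the paper uses, is that in this regime all three sets are empty off the diagonal: no periodic points means no cycles of proper periodic intervals, so there are no non-separable pairs either, and the equivalence holds vacuously.

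In the periodic case your reduction is also not sound as stated. Cutting $\bbs$ along a periodic orbit $P$ does not in general produce arcs that are ``cyclically permuted by $f$'': the image of a closed arc of $\bbs\setminus P$ is a connected set with endpoints in $P$ that may cover several arcs, so no power of $f$ need leave an arc invariant. This is exactly why the paper works instead with a lifting $F$ and an $F$-invariant interval $I\subset\bbr$ with $1\leq |I|<2$ (Lemma~\ref{lem:I-extension}); the invariant interval can genuinely be longer than $1$, so no arc of the circle itself suffices. The paper then proves $e\times e(NS(I,F))=NS(\bbs,f)$ (Lemma~\ref{lem:non-sep-e}), transfers IN- and IT-pairs both ways through the factor map $e$ via Lemma~\ref{INIT-conjugate}, and applies Li's interval theorem (Theorem~\ref{thm:interval-NS-IN-IT}) to $(I,F)$; the passage from $f$ to $f^p$ is handled by $NS(\bbs,f)=NS(\bbs,f^p)$ and $IN=IN(f^p)$, $IT=IT(f^p)$. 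Finally, your direct contrapositive for $(3)\Rightarrow(1)$ is incomplete on two counts: it ignores the requirement in the definition of a non-separable pair that $x$ and $y$ lie in a common $\omega$-limit set, and the key claim that ``mixed'' assignments cannot be realized is asserted rather than proved --- this is the substantive combinatorial content that the paper instead inherits from Li's interval result through the lifting.
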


In 2009, Huang and Ye \cite{Huang W Ye X} proposed the concept of maximal pattern entropy. 
They proved that the maximal pattern entropy of a dynamical system is equivalent to the supremum of all topological sequence entropy. 
And they also proved that the maximal pattern entropy can take only discrete values. 
In \cite{Huang W Ye X}, Huang and Ye conjectured that for each topological null system, the  maximal pattern entropy is of polynomial order for every open cover. 
And they proved that the 
conjecture holds for a zero dimensional system. 
In 2011, Li \cite{Li J 2011} proved that the conjecture holds for interval maps. 

Another main result of our paper is to show that the conjecture holds for circle maps by proving the following theorem.

\begin{thm}\label{thm:main-result2}
 A circle map $f\colon \bbs\to\bbs$ is  topological null if and only if $p^*_{\bbs,\calu}$ is of polynomial order for any open cover $\calu$ of $\bbs$.
\end{thm}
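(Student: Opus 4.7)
The plan is to establish the forward (easy) implication first and then attack the harder reverse implication by reducing to the interval map case of Li \cite{Li J 2011}, splitting according to the rotation number.

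The forward implication is routine: if $p^*_{\bbs,\calu}(n)$ grows at most polynomially in $n$ for every finite open cover $\calu$, then for any increasing sequence $S$ one has $h_S(f,\calu)\le \limsup_n \tfrac{1}{n} \log p^*_{\bbs,\calu}(n)=0$, and so $f$ is topologically null. For the reverse direction, assume $f$ is null, whence in particular $\htop(f)=0$. Since any zero-entropy circle map must have degree $\pm 1$, and since passing to $f^2$ preserves both nullness and polynomial order of the pattern complexity (up to replacing $n$ by a bounded multiple of $n$), I may assume $\deg(f)=1$, so the rotation number $\rho(f)$ is well defined. I then split into two cases.

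If $\rho(f)=p/q$ is rational, the iterate $f^q$ fixes some point $z\in\bbs$; cutting the circle at $z$ produces a null continuous interval map $\tilde f\colon I\to I$. Any open cover $\calu$ of $\bbs$ pulls back to an open cover $\tilde\calu$ of $I$ (after a bounded refinement to handle the cut), Li's theorem gives a polynomial bound for $p^*_{I,\tilde\calu}(n)$, and a direct combinatorial comparison of the refinements $\bigvee_{i\in S}f^{-i}\calu$ and $\bigvee_{i\in S}\tilde f^{-i}\tilde\calu$ yields a polynomial bound for $p^*_{\bbs,\calu}$. If $\rho(f)$ is irrational then $f$ has no periodic point and is monotonically semi-conjugate to the irrational rotation $R_\rho$ via a surjection $\pi\colon\bbs\to\bbs$. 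The point-preimages of $\pi$ are either singletons or closed wandering arcs, on each of which the first return map is a null interval map controlled again by Li's theorem. The rotation factor itself is equicontinuous, so it contributes only a linearly growing number of refinement cells; combining this with the polynomial bound from each wandering arc should deliver the desired polynomial growth on the whole circle.

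The main obstacle is exactly the irrational case: one must bound uniformly and polynomially in $n$ the number of wandering arcs that can contribute genuinely new cells to $\bigvee_{i\in S}f^{-i}\calu$ for an arbitrary subset $S\subset\bbn$ of size $n$, and then sum the per-arc polynomial bounds without losing polynomial order. Equicontinuity of $R_\rho$ shows that a single cover element can meet only boundedly many $\pi$-fibers at the scale of $\calu$, and nullness prevents the orbit from shuffling cover elements in too many ways on each wandering arc; fitting these two pieces together is the technical heart of the argument. Every other step is essentially a bookkeeping reduction to the interval result in \cite{Li J 2011}.
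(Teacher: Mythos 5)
Your forward implication and your rational-rotation-number (equivalently, $\per(f)\neq\emptyset$) case match the paper's strategy: reduce to an interval map and invoke Li's theorem, then transfer polynomial order through the factor map. Two caveats there. First, ``any zero-entropy circle map must have degree $\pm1$'' is false: degree $0$ is possible (e.g.\ constant maps); those maps do have fixed points, so they land in your rational case, but your reduction as written skips them. Second, ``cutting the circle at a fixed point $z$'' does not automatically produce a continuous interval map, because points near $z$ on one side may map past $z$ to the other side. The paper avoids this by constructing (Lemma~\ref{lem:I-extension}, using non-extensibility from Lemma~\ref{lem:zero-entropy-non-ext}) an $F$-invariant interval $I$ of length in $[1,2)$ in the lift, not the literal cut circle. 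Your ``bounded refinement to handle the cut'' does not address this continuity issue.

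The genuine gap is your irrational case, and it is not merely technical. A wandering arc has, by definition, no return to itself, so ``the first return map on each wandering arc is a null interval map controlled by Li's theorem'' is vacuous: there is no return map, and Li's theorem has nothing to act on. The correct observation is the opposite of what you propose: because the arcs are wandering, their forward images are pairwise disjoint, so their lengths sum to at most $1$ and only finitely many ($k$, say) exceed $\epsilon/2$; one then codes an $(A,n,\epsilon)$-spanning set by which sub-segment of these finitely many long arcs each $f^{a_i}(x)$ lands in, giving a bound of the form $n\lbrack 2/\epsilon\rbrack + n\,k\,N^{k}$, linear in $n$. This is exactly the step you flag as ``the technical heart'' and leave open, and it is the content of the paper's Case~2 of the aperiodic argument. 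The paper also handles the aperiodic homeomorphism case by a direct count of $(A,n,\epsilon)$-separated sets using orientation preservation (the $k$ gaps between consecutive points sum to at most $1$ at every time $a_i$, and each gap must exceed $\epsilon$ at some $a_i$, forcing $k\leq n(\lbrack 1/\epsilon\rbrack+1)$) -- no interval-map theorem is needed or usable in the aperiodic regime. As written, your proposal does not close the reverse implication when $\per(f)=\emptyset$.
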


The paper is organized as follows. In Section $2$, we recall some basic notions of topological dynamical systems
which will be used latter.  In Section $3$, several concepts and important lemmas related to circle maps are listed, We emphasize properties of the circle maps without periodic points and the structure of $\omega$-limit sets. In Section $4$, we study the relationship of IN-pairs, IT-pairs and non-separable pairs, and then prove Theorem~\ref{thm:main-result1}. In Section $5$, we review some properties on maximal pattern entropy and prove Theorem~\ref{thm:main-result2}.

\section{Preliminaries}

Throughout this paper, let $\bbn$, $\bbr$ and $\bbc$ denote the set of all non-negative integers, real numbers, and  complex numbers, respectively. Let $\bbs$ be the set $\{z\in \bbc\colon |z|=1\}$. Let $M$ denote a non-degenerate closed interval $I$ or the circle $\bbs$ and let $C(M,M)$ denote the set of all continuous maps of $M$ into itself. Denote by $\Delta$ all the diagonal pairs of $X^{2}$ where $X$ is a compact metric space. The length of an interval $I$ is denoted by $|I|$. The cardinality of a finite set $A$ is denoted by $\#(A)$. Denote by $\{a\}$ to take the fractional part of the number $a$. 

Now we introduce some basic notions in topological dynamics. We start with topological dynamical system and then introduce some concepts, including $\omega$-limit set, periodic points, and eventually periodic points. 

\begin{defn}
By a  topological dynamical system, we mean a pair $(X,f)$, where $X$ is a compact metric space with a metric $d$ and $f$ is a continuous map.
\end{defn}

\begin{defn}
	Let $(X,f)$ be a topological dynamical system. A point $x\in X$ is called \emph{periodic} 
	(denoted by $x\in \per(f)$) with \emph{period} $n$
	if $f^n(x)=x$ and $f^i(x)\neq x$ for any $1\leq i<n$. The set of periods of $f$ is denoted by $P(f)$ that is if $x\in \per(f)$ has period $n$ then $n\in P(f)$.
	A periodic point with period $1$ is called a \emph{fixed point} (denoted by $x\in \fix(f)$).	  
\end{defn}

\begin{defn}
	Let $(X,f)$ be a topological dynamical system and $x\in X$.
	We define the $\omega$-limit set of $x$ as
	\[\omega (x,f)=\bigcap_{n\geq 1}\overline{\left\lbrace f^i(x)\colon i\geq n\right\rbrace }\] 
	and the $\omega$-limit set of $f$ as
	\[\omega(f)=\bigcup_{x\in X} \omega(x,f).\]
\end{defn}

\begin{defn}
	Let $(X,f)$ and $(Y,g)$ be two topological dynamical systems. 
	A continuous map $\pi\colon X\rightarrow Y$ is a \emph{semi-conjugacy} between $f$ and $g$
	if $\pi$ is onto and $\pi\circ f=g\circ \pi$. If in addition $\pi$ is a  homeomorphism, then $\pi$ is called a \emph{conjugacy} between $f$ and $g$.
\end{defn}

\begin{defn}
	Let $X$ be a compact metric space and $\calc_X^o$ 
	be the set of open covers of $ X$.
	For $\calu\in \calc_X^o$, we define $N(\calu)$ as the minimum cardinality of subcovers of $\calu$.
	The \emph{join} of two open covers $\calu,\calv\in \calc_X^o$ is
	\[
	\calu\vee\calv=\{U\cap V\colon U\in\calu, V\in\calv\}.
	\]
\end{defn}

\begin{defn}
	Let $(X,f)$ be a topological dynamical system and $\calu,\calv\in \calc_X^o$. \emph{The topological entropy of $f$ with respect to $\calu$} is defined as  
	\[\htop(f,\calu)=\lim_{n \to \infty}\frac{1}{n}\log\biggl (N\bigg (\bigvee_{i=1}^{n}f^{-i}(\calu)\biggr )\bigg ),\]  
	and \emph{the topological entropy of $(X,f)$} is defined as 
	\[\htop(f)=\sup_{\calu\in\calc_X^o}\htop(f,\calu)\]   
	\end {defn}
	
	\begin{defn}
		Let $(X,f)$ be a topological dynamical system and $A=\left\lbrace a_{1}<a_{2}<a_{3}<\cdots\right\rbrace $ an increasing sequence of positive integers. Take $\calu\in \calc_X^o$. \emph{The topological sequence entropy of $f$ with respect to $\calu$ along the sequence $A$} is defined as 
		\[\htop ^A(f,\calu)=\lim_{n \to \infty}\frac{1}{n}\log \bigg (N \bigg (\bigvee_{i=1}^n f^{-a_i}(\calu) \bigg ) \bigg ),\]
	     \emph{the topological sequence entropy of $(X,f)$ along the sequence $A$} is defined as
		\[\htop^A(f)=\sup_{\calu\in\calc_X^o}\htop^A(f,\calu).\] 
		and \emph{the topological sequence entropy} of $(X,f)$ is 
		\[
		\htop^\infty(f)=\sup_A \htop^A(f),
		\]
		where the supreme takes over all increasing sequences of positive integers. 
		
		Moreover,  $(X,f)$ is called \emph{topological null} if $\htop^\infty(f)=0$ and 
		otherwise it is called \emph{non-null}. 
	\end{defn}

The following two lemmas can be verified easily by the definition of topological semi-conjugacy.

\begin{lem}\label{lem:W-limit set-conjugate} 
	Let $(X,f)$ and $(Y,g)$ be two topological dynamical systems and  $\pi\colon X\rightarrow Y$ is a \emph{semi-conjugacy} between $f$ and $g$.
	It holds that for any $x\in X$,  $\pi (\omega(x,f))=\omega(\pi(x),g)$.
\end{lem}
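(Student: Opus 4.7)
The plan is to prove the equality as two set inclusions, using the sequential characterization of $\omega$-limit sets: $z\in\omega(x,f)$ if and only if there exists a sequence $n_k\to\infty$ with $f^{n_k}(x)\to z$. A preliminary step is to iterate the semi-conjugacy relation: a trivial induction on $\pi\circ f=g\circ\pi$ yields $\pi\circ f^n=g^n\circ\pi$ for every $n\geq 0$, which will be the workhorse identity in both inclusions.

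For the forward inclusion $\pi(\omega(x,f))\subseteq\omega(\pi(x),g)$, I would take any $z\in\omega(x,f)$, pick $n_k\to\infty$ with $f^{n_k}(x)\to z$, and push the convergence forward under $\pi$. By continuity of $\pi$ and the iterated commutation identity, $g^{n_k}(\pi(x))=\pi(f^{n_k}(x))\to\pi(z)$, so $\pi(z)\in\omega(\pi(x),g)$.

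The reverse inclusion $\omega(\pi(x),g)\subseteq\pi(\omega(x,f))$ is the step where compactness of $X$ enters. Given $w\in\omega(\pi(x),g)$ with $g^{n_k}(\pi(x))\to w$, the corresponding sequence $f^{n_k}(x)$ need not converge, but since $X$ is compact it admits a convergent subsequence $f^{n_{k_j}}(x)\to z$, and this $z$ lies in $\omega(x,f)$ by definition. Applying continuity of $\pi$ and the commutation identity along this subsequence gives $\pi(z)=\lim_j g^{n_{k_j}}(\pi(x))=w$, hence $w\in\pi(\omega(x,f))$.

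I do not anticipate a genuine obstacle here; the only point requiring care is remembering to extract a convergent subsequence via compactness of $X$ in the reverse inclusion. It is worth noting that surjectivity of $\pi$ is not actually used in the argument, so the conclusion holds for any continuous intertwining map between $f$ and $g$.
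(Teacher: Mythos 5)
Your proof is correct and is exactly the standard verification the paper has in mind (the paper omits the proof, stating only that the lemma "can be verified easily by the definition"): push forward convergent subsequences via $\pi\circ f^n=g^n\circ\pi$ for one inclusion, and use compactness of $X$ to extract a convergent subsequence for the other. Your remark that surjectivity of $\pi$ is not needed is also accurate.
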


\begin{lem}\label{lem: conjugate and entropy}
	Let $\varphi\colon (X,f)\to (Y,g)$ be a topological semi-conjugacy. Then $\htop(g)\leq \htop(f)$ and $\htop^{\infty}(g)\leq \htop^{\infty}(f)$.
\end{lem}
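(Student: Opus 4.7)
The lemma is a purely formal consequence of the definitions, so the plan is a short verification rather than a new idea. The one input I need is that the surjectivity of $\varphi$ lets me compare covers of $Y$ with their $\varphi$-preimages in $X$. Concretely, for any $\calv \in \calc_Y^o$, the family $\varphi^{-1}(\calv) := \{\varphi^{-1}(V) : V \in \calv\}$ is an open cover of $X$ (open by continuity of $\varphi$, a cover because $\varphi$ is onto). Moreover, if $\{\varphi^{-1}(V_1), \ldots, \varphi^{-1}(V_m)\}$ is a minimal subcover of $\varphi^{-1}(\calv)$ witnessing $N(\varphi^{-1}(\calv)) = m$, then by surjectivity the corresponding $\{V_1, \ldots, V_m\}$ already covers $Y$, so $N(\calv) \leq N(\varphi^{-1}(\calv))$.

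Next I would use the semi-conjugacy identity, which iterates to $g^n \circ \varphi = \varphi \circ f^n$ for every $n \geq 0$. Since preimage under $\varphi$ commutes both with the operations $g^{-n}$ and with intersections, for any increasing sequence $a_1 < a_2 < \cdots < a_n$ and any $\calv \in \calc_Y^o$ I get
\[
\varphi^{-1}\left(\bigvee_{i=1}^n g^{-a_i}(\calv)\right) = \bigvee_{i=1}^n f^{-a_i}(\varphi^{-1}(\calv)).
\]
Feeding this joined cover into the inequality from the previous paragraph yields
\[
N\left(\bigvee_{i=1}^n g^{-a_i}(\calv)\right) \leq N\left(\bigvee_{i=1}^n f^{-a_i}(\varphi^{-1}(\calv))\right).
\]

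Taking $\frac{1}{n}\log$ and letting $n \to \infty$ with the choice $a_i = i$ gives $\htop(g,\calv) \leq \htop(f,\varphi^{-1}(\calv)) \leq \htop(f)$, and taking the supremum over $\calv \in \calc_Y^o$ produces $\htop(g) \leq \htop(f)$. Running the same passage to the limit along a general increasing sequence $A = \{a_1 < a_2 < \cdots\}$ gives $\htop^A(g,\calv) \leq \htop^A(f,\varphi^{-1}(\calv)) \leq \htop^A(f)$, and taking the supremum first over $\calv$ and then over all such $A$ yields $\htop^\infty(g) \leq \htop^\infty(f)$. There is no real obstacle here; the only point requiring care is to use surjectivity in the correct direction, namely to deduce $N(\calv) \leq N(\varphi^{-1}(\calv))$ rather than the (also true) reverse inequality, since only this direction converts cover growth on $Y$ into an upper bound by cover growth on $X$.
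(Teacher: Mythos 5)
Your argument is correct and is the standard verification; the paper actually states this lemma without proof, remarking only that it ``can be verified easily by the definition of topological semi-conjugacy,'' so your write-up supplies exactly the intended argument. One cosmetic point: $\varphi^{-1}(\calv)$ covers $X$ simply because $\varphi^{-1}(Y)=X$, with no surjectivity needed --- surjectivity enters only where you correctly use it later, namely to pass from a subcover of $\varphi^{-1}(\calv)$ back to a subcover of $\calv$ and conclude $N(\calv)\leq N(\varphi^{-1}(\calv))$.
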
 

\begin{defn}
	Let $(X,f)$ be a topological dynamical system and 
	$ A_1, A_2,\cdots, A_k \subset X$.
	We call $I\subset\bbn$ 
	\emph{an independence set} of $\left\lbrace A_1, A_2,\cdots, A_k \right\rbrace$ if for any non-empty finite subset $J$ of $I$ and $S\in \{1,2,\dotsc,k\}^J$, 
	$\bigcap\nolimits_{i\in J}f^{-i}A_{S(i)}\neq \emptyset$.
\end{defn}

\begin{defn}
	Let $(X,f)$ be a topological dynamical system.
	A pair $\left\langle x, y\right\rangle  \in X\times X$ is called an \emph{IN-pair} (reps.\ an \emph{IT-pair}) if for any neighborhoods $U_1$ and $U_2$
	of $x$ and $y$ respectively, $\left\lbrace U_1, U_2 \right\rbrace $ has arbitrarily large finite independence sets 
	(reps.\ $\left\lbrace U_1, U_2 \right\rbrace $ has an infinite  independence set).
	Denote the set of IN-pairs and IT-pairs of $(X,f)$ by
	$IN(X,f)$ and $IT(X,f)$ respectively.
	
	It is clear that every IT-pair is also an IN-pair.
\end{defn}

\begin{thm}[\cite{Kerr 2007}]\label{thm:null-IN}
A topological dynamical system is topological null if and only 
if every IN-pair is diagonal.	
\end{thm}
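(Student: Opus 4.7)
The proof splits into two contrapositive arguments, each of which I would develop from the independence-set formulation of IN-pairs.

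\emph{If $\langle x,y\rangle$ is a non-diagonal IN-pair, then $\htop^\infty(f)>0$.} Pick open neighborhoods $U_1\ni x$ and $U_2\ni y$ with $\overline{U_1}\cap\overline{U_2}=\emptyset$, and set $\calv=\{V_1,V_2\}$ with $V_j=X\setminus\overline{U_{3-j}}$; disjoint closures force $\calv$ to be an open cover of $X$. The first step is to show that for every independence set $I$ of $\{U_1,U_2\}$, $N(\bigvee_{i\in I}f^{-i}\calv)\geq 2^{|I|}$: for each $S\in\{1,2\}^I$ pick $z_S\in\bigcap_{i\in I}f^{-i}U_{S(i)}$; since $f^i(z_S)\in V_{S(i)}$ but $f^i(z_S)\notin V_{3-S(i)}$, the $2^{|I|}$ witnesses lie in pairwise distinct atoms of the join. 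Using that subsets of independence sets are independence sets and that $\{U_1,U_2\}$ admits finite independence sets of every size, I can choose, for each $k$, an independence set $I_k$ of cardinality $2^k$ lying above every previously selected index; enumerating $A=\bigcup_k I_k$ in increasing order gives $N(\bigvee_{i=1}^{m_k}f^{-a_i}\calv)\geq 2^{2^k}$ with $m_k\leq 2\cdot 2^k$, whence $\htop^A(f,\calv)\geq \tfrac{1}{2}\log 2$ and $\htop^\infty(f)>0$.

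\emph{If $\htop^\infty(f)>0$, then a non-diagonal IN-pair exists.} Fix $A=\{a_1<a_2<\dots\}$ and a finite open cover $\calu=\{U_1,\dots,U_k\}$ with $\htop^A(f,\calu)>0$; a Lebesgue-number refinement lets me also assume any prospective pair $U_\alpha,U_\beta$ can be shrunk to have disjoint closures. Exponential growth of the nonempty atoms $\bigcap_{i=1}^n f^{-a_i}U_{S(i)}$ in $S\in\{1,\dots,k\}^n$ means that the family of realized $\calu$-names has cardinality at least $e^{cn}$ for some $c>0$. Applying the Sauer--Shelah lemma to the two-valued set systems induced by pairs of cover elements produces indices $\alpha\neq\beta$ and a density $\delta>0$ such that for every $n$ there is an independence set $J_n$ of $\{U_\alpha,U_\beta\}$ of size at least $\delta n$. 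To lift the family $\{J_n\}$ to an IN-pair, pick witnesses $z_S^n\in\bigcap_{i\in J_n}f^{-i}U_{S(i)}$ for $S\in\{\alpha,\beta\}^{J_n}$; a diagonal compactness argument over nested sub-indices and bit-flips produces $\xi\in\overline{U_\alpha}$ and $\eta\in\overline{U_\beta}$ such that every pair of neighborhoods of $\xi,\eta$ still sees arbitrarily large finite independence sets. Then $\langle\xi,\eta\rangle\in IN(X,f)$, and it is non-diagonal since $\overline{U_\alpha}\cap\overline{U_\beta}=\emptyset$.

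The principal obstacle is the Sauer--Shelah extraction in the second direction: passing from the exponential count of $\calu$-names to a single pair of cover elements whose $\{0,1\}$-patterns are fully shattered on linearly many coordinates. The subsequent compactness extraction is delicate because shrinking neighborhoods of the candidate IN-pair $(\xi,\eta)$ can destroy witnesses; a positive-density version of Sauer--Shelah, rather than a mere existence statement, is what lets independence survive the limit, yielding the required non-diagonal IN-pair.
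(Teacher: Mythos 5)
This theorem is not proved in the paper at all: it is quoted from Kerr and Li \cite{Kerr 2007}, so there is no in-paper argument to compare against, and your attempt must be judged on its own. Your first implication (a non-diagonal IN-pair forces $\htop^{\infty}(f)>0$) is essentially correct and complete: the cover $\calv=\{X\setminus\overline{U_2},\,X\setminus\overline{U_1}\}$ is indeed an open cover when $\overline{U_1}\cap\overline{U_2}=\emptyset$, each witness $z_S$ lies in exactly one atom of $\bigvee_{i\in I}f^{-i}\calv$, and concatenating independence sets $I_k$ of size $2^k$ (placed above all earlier indices, which is legitimate since subsets of independence sets are independence sets) gives $N\bigl(\bigvee_{i=1}^{m_k}f^{-a_i}\calv\bigr)\geq 2^{2^k}$ with $m_k\leq 2\cdot 2^k$, hence positive sequence entropy along $A$. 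This is the standard easy direction.

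The converse, however, contains genuine gaps that you name but do not close. First, the combinatorial core is not the classical Sauer--Shelah lemma: what is required is the Karpovsky--Milman-type lemma of Section 3 of \cite{Kerr 2007}, which extracts from exponentially many realized $k$-valued names \emph{two} of the $k$ symbols together with a positive-density set of coordinates on which those two symbols occur independently. That lemma is the technical heart of Kerr--Li's paper and cannot be invoked as a black box in a self-contained proof. Second, your non-diagonality argument fails as stated: the pair $U_\alpha,U_\beta$ produced by the combinatorial extraction may overlap, and no ``Lebesgue-number refinement'' entitles you to assume disjoint closures in advance; with overlapping $U_\alpha,U_\beta$ your compactness limit can collapse to a diagonal pair, which proves nothing. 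The standard repair is to work with the closed complements $X\setminus U_j$ and produce an IN-tuple $(x_1,\dots,x_k)$ with $x_j\in X\setminus U_j$ for every $j$; since $\bigcap_j(X\setminus U_j)=\emptyset$, the coordinates cannot all coincide, and any two distinct coordinates of an IN-tuple form a non-diagonal IN-pair. Third, the final compactness extraction needs positive independence density to survive every refinement of the candidate neighborhoods, which forces a reapplication of the combinatorial lemma at each stage of the nested construction; you correctly identify this as the obstacle, but identifying it is not resolving it. As written, the second implication is a plan rather than a proof.
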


\begin{lem}[\cite{Kerr 2007}]\label{INIT-conjugate}
	Let $(X,f)$ and $(Y,g)$ be two topological dynamical systems and  $\pi\colon X\rightarrow Y$ is a semi-conjugacy between $f$ and $g$.       
	Then $\pi\times \pi (IN(X,f))=IN(Y,g)$ 
	and $\pi\times \pi (IT(X,f))=IT(Y,g)$.
\end{lem}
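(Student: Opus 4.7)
The plan is to prove each of the two set equalities $\pi\times\pi(IN(X,f))=IN(Y,g)$ and $\pi\times\pi(IT(X,f))=IT(Y,g)$ by establishing the two inclusions separately, using essentially the same strategy in both the IN and IT versions. The forward inclusions follow from a short manipulation with the semi-conjugacy relation, while the reverse inclusions require a compactness-plus-Ramsey lifting argument, which I expect to be the main obstacle.

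For the forward inclusion $\pi\times\pi(IN(X,f))\subseteq IN(Y,g)$ I would fix $\langle x,y\rangle\in IN(X,f)$ and arbitrary open neighborhoods $U_1, U_2$ of $\pi(x), \pi(y)$ in $Y$, making $\pi^{-1}(U_1), \pi^{-1}(U_2)$ open neighborhoods of $x, y$ in $X$. Iterating $\pi\circ f=g\circ\pi$ gives $f^{-i}\pi^{-1}(U)=\pi^{-1}(g^{-i}(U))$ for every open $U\subseteq Y$ and every $i\geq 1$. Hence for any finite $J\subseteq\bbn$ and $S\colon J\to\{1,2\}$,
\[
\bigcap_{i\in J}f^{-i}\pi^{-1}(U_{S(i)})=\pi^{-1}\bigg(\bigcap_{i\in J}g^{-i}(U_{S(i)})\bigg),
\]
so any independence set for $\{\pi^{-1}(U_1),\pi^{-1}(U_2)\}$ in $(X,f)$ is automatically one for $\{U_1, U_2\}$ in $(Y,g)$. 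The IN property transfers, and the identical computation with infinite independence sets handles IT-pairs.

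For the reverse inclusion $IN(Y,g)\subseteq\pi\times\pi(IN(X,f))$, given $\langle x',y'\rangle\in IN(Y,g)$ I would argue by contradiction: assume no $(x,y)\in\pi^{-1}(x')\times\pi^{-1}(y')$ is an IN-pair. Then for each such $(x,y)$ I can pick a product open neighborhood $U_{x,y}\times V_{x,y}$ and a threshold $N_{x,y}\in\bbn$ witnessing the failure, and use compactness of $\pi^{-1}(x')\times\pi^{-1}(y')$ to extract a finite subcover by rectangles $\{U_j\times V_j\}_{j=1}^k$ with uniform bound $N=\max_j N_j$. Since $\pi$ is a closed map (as $X$ is compact and $Y$ is Hausdorff), I can then choose open neighborhoods $U'\ni x'$ and $V'\ni y'$ with $\pi^{-1}(U')\subseteq\bigcup_j U_j$ and $\pi^{-1}(V')\subseteq\bigcup_j V_j$.

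The main obstacle is now the combinatorial step of converting a large independence set $I$ for $\{U',V'\}$ in $Y$ into a large independence set for some single pair $\{U_{j^*},V_{j^*}\}$ in $X$. After lifting witnesses $y_S\in Y$ (for $S\in\{1,2\}^I$) to $x_S\in X$ via surjectivity of $\pi$, each iterate $f^i(x_S)$ lies in $U_j$ (if $S(i)=1$) or $V_j$ (if $S(i)=2$) for some index $j\in\{1,\dots,k\}$, inducing a coloring of the pairs $(i,S)$ by $\{1,\dots,k\}$. A pigeonhole/Ramsey extraction then produces a common index $j^*$ and a subset $I^*\subseteq I$ of cardinality at least $N_{j^*}$ on which $\{U_{j^*},V_{j^*}\}$ has an independence set, contradicting the construction. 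For IT-pairs the infinite Ramsey theorem applied to the induced coloring of an infinite independence set yields an infinite monochromatic sub-independence set immediately; for IN-pairs one needs quantitative control on how large $|I|$ must be taken to guarantee $|I^*|\geq N$, which is the delicate point.
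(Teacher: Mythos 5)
The paper does not actually prove this lemma---it is quoted directly from Kerr and Li \cite{Kerr 2007}---so your attempt can only be judged on its own terms. The forward inclusion $\pi\times\pi(IN(X,f))\subseteq IN(Y,g)$ (and likewise for IT) is correct and complete: the identity $\bigcap_{i\in J}f^{-i}\pi^{-1}(U_{S(i)})=\pi^{-1}\bigl(\bigcap_{i\in J}g^{-i}(U_{S(i)})\bigr)$ together with surjectivity of $\pi$ does exactly what you say.

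The reverse inclusion is where the real content lies, and there your proof has a genuine gap. The compactness/closed-map reduction to finitely many ``bad'' rectangles $U_j\times V_j$ is fine, but the step you describe as ``a pigeonhole/Ramsey extraction'' is precisely the hard core of the theorem and is not a routine pigeonhole. What you need is: if $I$ is an independence set for $(A,B)$ with $A\subseteq\bigcup_j A_j$ and $B\subseteq\bigcup_j B_j$, then some pair $(A_{j_1},B_{j_2})$ admits an independence subset of $I$ of size proportional to $|I|$ (for IN), respectively an infinite one (for IT). For a subset $I^*$ to be an independence set for $(U_{j^*},V_{j^*})$ you must realize \emph{every} pattern $\tau\in\{1,2\}^{I^*}$ with the \emph{same} index $j^*$ at every coordinate of every witness, whereas your colouring assigns an index to each pair $(i,S)$ separately; pigeonhole over these colours does not produce such an $I^*$. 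The quantitative statement needed for IN is a Sauer--Shelah/Karpovsky--Milman-type lemma (Lemmas 3.3 and 3.5 of \cite{Kerr 2007}), proved by a counting argument, and the infinite (IT) version is Lemma 6.3 of \cite{Kerr 2007}, whose Ramsey argument colours finite subsets of $I$, not pairs $(i,S)$ (when $I$ is infinite, $S$ ranges over an uncountable family, so the ``induced colouring'' you invoke is not in the scope of the infinite Ramsey theorem, and the extraction is certainly not ``immediate''). A second, smaller issue: your rectangles only certify that the \emph{same-index} pairs $(U_j,V_j)$ are bad, so an extraction landing on $(U_{j_1},V_{j_2})$ with $j_1\neq j_2$ yields no contradiction; one must either split one set of the pair at a time (as Kerr--Li do) or first refine the cover so that every cross pair that can arise is bad.
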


The following  is a ``folklore'' result, see e.g. \cite[Theorem 2.6]{TYZ10} but without proofs.
Here we provide a proof for completeness.

\begin{lem}\label{IN-IT-p}
Let $(X,f)$ be a topological dynamical system. 
Then for every $p\in\bbn$, $IN(X,f)=IN(X,f^p)$ and  $IT(X,f)=IT(X,f^p)$.
\end{lem}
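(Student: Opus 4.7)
The plan is to handle both equalities $IN(X,f)=IN(X,f^p)$ and $IT(X,f)=IT(X,f^p)$ simultaneously, since the argument differs only in whether we are dealing with arbitrarily large finite independence sets or with a single infinite one. Each equality I would establish by two inclusions. The inclusions $IN(X,f^p)\subseteq IN(X,f)$ and $IT(X,f^p)\subseteq IT(X,f)$ are immediate from the identity $(f^p)^{-j}=f^{-pj}$: an independence set $J$ for $\{U_1,U_2\}$ under $f^p$ yields the independence set $pJ=\{pj\colon j\in J\}$, of the same size, under $f$, using the same witness points and the function $S'(pj)=S(j)$.

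The substantive direction is $IN(X,f)\subseteq IN(X,f^p)$, together with its IT analogue, which I would handle with a pigeonhole argument combined with a shift by a power of $f$. Let $\langle x,y\rangle$ be an IN-pair for $f$, fix neighborhoods $V_1,V_2$ of $x$ and $y$, and fix a target size $N$. By hypothesis $\{V_1,V_2\}$ admits a finite independence set of size $pN$ under $f$, and a standard pigeonhole extracts $N$ elements $i_1<\cdots<i_N$ lying in a common residue class $r\pmod{p}$; write $i_l=pj_l+r$. For any $S\colon\{1,\dots,N\}\to\{1,2\}$, the $f$-independence yields some $z\in\bigcap_{l}f^{-i_l}V_{S(l)}$, and the key observation is that $(f^p)^{j_l}(f^r(z))=f^{i_l}(z)\in V_{S(l)}$, so $f^r(z)\in\bigcap_{l}(f^p)^{-j_l}V_{S(l)}$. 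Hence $\{j_1,\dots,j_N\}$ is an independence set of the required size under $f^p$, which shows $\langle x,y\rangle\in IN(X,f^p)$.

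The IT inclusion $IT(X,f)\subseteq IT(X,f^p)$ proceeds identically, except the pigeonhole is invoked in its infinite form: an infinite $f$-independence set must contain an infinite subset in a single residue class modulo $p$, and the same shift-by-$f^r$ trick converts it into an infinite $f^p$-independence set for $\{V_1,V_2\}$. There is no serious obstacle in this argument: the only point worth highlighting is that after pigeonhole fixes the residue $r$, the witness $z$ for $f$-independence becomes the witness $f^r(z)$ for $f^p$-independence of the shifted indices $j_l=(i_l-r)/p$, which is precisely what makes the passage between the two systems work for one and the same pair $\langle x,y\rangle$.
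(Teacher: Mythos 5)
Your proposal is correct and follows essentially the same route as the paper's proof: the easy inclusion by multiplying indices by $p$, and the substantive inclusion by pigeonholing a size-$pN$ (resp.\ infinite) independence set into a single residue class $r \pmod p$ and converting witnesses via the identity $f^{-r}\bigl(\bigcap_l (f^p)^{-j_l}V_{S(l)}\bigr)=\bigcap_l f^{-(pj_l+r)}V_{S(l)}$, which is just the preimage form of your shift-by-$f^r$ observation.
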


\begin{proof}
	We only prove the case $IN(X,f)=IN(X,f^p)$, as the case $IT(X,f)=IT(X,f^p)$ is similar.

$(\Rightarrow)$ 
Let $\left\langle x,y \right\rangle $ be an IN-pair of $(X,f)$. 
By the definition, for any neighborhoods $U_1$ of $x$ and $U_2$ of $y$ respectively and any $m\in\bbn$, $\{U_1, U_2\}$ has an independence set $I$ with length $mp$, since the number of the remainders of mod $p$ is finite, if we denote $Q_r$ as the set of all numbers of the independece set $I$ which mod $p$ has the remainder $r$, there exist at least a $r\in \{0,1,\cdots, p-1\}$ such that the number of $Q_r$ is larger than or equal to $m$. We choose one $r$ which satisfies this condition and choose $m$ elements  $\{q_1,q_2,\cdots, q_m\}$ from $Q_r$. Denote $I_{m}=\{l_1,l_2,\cdots, l_m\}$ such that for all 
$i\in\left\lbrace 1,2,\cdots, m \right\rbrace $, $q_i=l_i+r$.  
Now we will show that 	$I_m$ is an independence set of $\{U_1, U_2\}$ under $f^p$ which has length $m$.
For all $J_{m}\subset I_{m}$ which $J_{m}$ is a finite set, 
there exists a finite set $J\subset I$ such that $J=p\cdot J_m +r$. 
For any $\sigma=\left\lbrace 1,2 \right\rbrace^{J_m}$, 
\[
f^{-r}\biggl(\bigcap_{i\in J_m}(f^p)^{-i}U_{\sigma(i)}\biggr)=\bigcap_{i\in J_m}f^{-r}(f^{-pi}U_{\sigma(i)})=\bigcap_{i\in J_m}f^{-r-pi}U_{\sigma(i)}=\bigcap_{j\in J}f^{-j}U_{\sigma(i)}.
\]
Take $\sigma'=\left\lbrace 1,2 \right\rbrace^{J}$ such that
for all $j=r+pi\in J$, $\sigma'(j)=\sigma(i)$. 
Since $I$ is an independence set of $f$, 
\[\bigcap_{j\in J}f^{-j}U_{\sigma^{'}(j)}=\bigcap_{j\in J}f^{-j}U_{\sigma(i)}\neq \emptyset. \]
Therefore $f^{-r}(\bigcap_{i\in J_m}(f^p)^{-i}U_{\sigma(i)})\neq \emptyset$ and $\bigcap_{i\in J_m}(f^p)^{-i}U_{\sigma(i)}\neq \emptyset$ obviously.
Thus $\left\langle x,y \right\rangle $ is an IN-pair of $(X,f^p)$.
	
$(\Leftarrow)$ 
Let $\left\langle x,y \right\rangle $ be an IN-pair of $(X,f^p)$.
By the definition, for any neighborhoods $U_1$ of $x$ and $U_2$ of $y$ respectively and any $m\in\bbn$, there exists an independence set $I_m$ of $\{U_1, U_2\}$ under $f^p$ with length $m$.
 Now we will claim that $I_m'=pI_m$ is an independence set of 
$\{ U_1, U_2 \} $ under $f$ with length $m$.
	For all $J_m'\subset I_m'$, 
	there exists $J_m \subset I_m$ such that $J_m'=pJ_m$. 
	For any $\sigma'=\{1,2\}^{J_m'}$, there exists $\sigma=\{1,2\}^{J_m}$ such that for all
	 $j=pi\in J_m'$, 
	$\sigma'(j)=\sigma(i)$. Then 
	\[\bigcap_{j\in J_m'}f^{-j}U_{\sigma'(j)}
	= \bigcap_{i\in J_m}f^{-pi}U_{\sigma(i)}
	= \bigcap_{i\in J_m}(f^p)^{-i}U_{\sigma(i)}\neq \emptyset.\]
Hence $\left\langle x,y \right\rangle $ is an IN-pair of $f$.
\end{proof}

\section{Preparations on circle maps}
\subsection{General notions}
In this subsection we introduce natural projection, the ordering and metric of the circle, and 
orientation-preserving homeomorphism. Besides we introduce the definition of lifting, degree and rotation number of the circle.
We refer the readers to the textbook \cite{Alseda 2001} and \cite{Boris Hasselblatt 2003} for more informations. 

\begin{defn}
The \emph{natural projection}
	 $e\colon \mathbb{R}\rightarrow \bbs$ is the map $e(x)=\exp(2\pi i x)$.
	 
According to the natural projection, we also view the circle $\bbs$
as $[0,1)\pmod 1$.
\end{defn}

\begin{defn}\label{defn:metric of circle} 
	We define a metric $d$ on $\bbs$ as follows: for any $x,y\in \bbs$, 
	\[d(x,y)=\min\left\lbrace \left| x'-y'\right| \colon e(x')=x,  e(y')=y\right\rbrace. \]
	It is easy to see that the distance between $a$ and $b$ can be described as follows 
	\[d(a,b)=\begin{cases}
	\left| a-b \right|, \quad\left| a-b \right|\leq\frac{1}{2},\\
	1-\left| a-b \right|, \quad  \left| a-b \right|>\frac{1}{2}.
	\end{cases}\]
\end{defn}

\begin{defn}
	\label{defn:order of circle} 
	Given $x_0, x_1, x_2, \cdots, x_n \in \bbs$, take
	$\tilde{x}_0, \tilde{x}_1, \tilde{x}_2, \cdots, \tilde{x}_n \in [\tilde{x}_0,\tilde{x}_0+1)\subset\mathbb{R}$ such that $e(\tilde{x}_i)=x_i$, 
	\emph{the ordering of $(x_0, x_1, x_2, \cdots, x_n)$ on $\bbs$} is the 
	permutation of $\left\lbrace 1,2, \cdots ,n\right\rbrace $ which satisfies that
	$\tilde{x}_0<\tilde{x}_{\sigma_{(1)}}<\tilde{x}_{\sigma_{(2)}}, \cdots< \tilde{x}_{\sigma_{(n)}}<\tilde{x}_0+1$.
\end{defn}

\begin{defn}
	A circle map $f\in \css$ is called \emph{orientation-preserving homeomorphism} if $f$ is a homeomorphism and it preserves the ordering which is defined in Definition~\ref{defn:order of circle} under $f$.
\end{defn}

The following theorem is well known, see the textbook~\cite{Boris Hasselblatt 2003} for more detail.

\begin{thm} 
	Let $f\in \css$. We call a continuous function $F\colon \mathbb{R}\rightarrow\mathbb{R}$ a \emph{lifting} of $f$ if  
	$e\circ F=f\circ e$. 
	The integer $d\in\bbz$
	such that $F(x+1)=F(x)+d$ exists for all $x\in \bbr$ and is called the \emph{degree} of $f$ (or $F$), denoted by $\degg(f)$.
	
	In later parts of this paper, we denote by $\lift_1$ the set of all liftings of the circle maps with $\degg(f)=1$.
\end{thm}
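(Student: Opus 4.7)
The plan is to split the statement into two parts: first, prove that every $f\in\css$ admits a continuous lifting $F\colon\bbr\to\bbr$ with $e\circ F = f\circ e$; second, prove that any such lifting satisfies $F(x+1)-F(x)=d$ for some integer $d$ independent of $x$.

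For the existence of $F$, I would invoke the path-lifting property of the covering map $e\colon\bbr\to\bbs$. Fix a basepoint $x_0\in\bbr$, choose some $y_0\in e^{-1}(f(e(x_0)))$, and set $F(x_0)=y_0$. For an arbitrary $x\in\bbr$, the assignment $t\mapsto f(e(x_0+t(x-x_0)))$ defines a continuous path $\gamma_x\colon [0,1]\to\bbs$, and because $e$ is a covering map this path admits a unique continuous lift $\tilde\gamma_x\colon[0,1]\to\bbr$ with $\tilde\gamma_x(0)=y_0$; define $F(x):=\tilde\gamma_x(1)$. To check continuity of $F$, note that around each point $x\in\bbr$ there is a neighborhood $V$ of $f(e(x))$ in $\bbs$ that is evenly covered by $e$, with a continuous inverse branch $e^{-1}_V$; on a sufficiently small neighborhood of $x$ the lifting $F$ agrees with $e^{-1}_V\circ f\circ e$ (up to a fixed integer translation dictated by the previous patch), which is continuous by construction. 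One could alternatively construct $F$ by exhibiting a locally finite cover of $\bbr$ by small compact intervals, using on each interval the local branch of $\log/(2\pi i)$, and patching the branches together using uniqueness of lifts from the basepoint.

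For the degree relation, observe that $e(x+1)=e(x)$ forces
\[
e(F(x+1)) = f(e(x+1)) = f(e(x)) = e(F(x)),
\]
so $F(x+1)-F(x)\in\bbz$ for every $x\in\bbr$. The function $x\mapsto F(x+1)-F(x)$ is continuous on the connected space $\bbr$ and takes only integer values, hence it is constant; calling this common integer $d$ yields the claimed relation $F(x+1)=F(x)+d$.

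The main obstacle is the existence and continuity of the lifting $F$, which genuinely relies on covering-space theory; all of the topological content of the theorem is concentrated there. Once $F$ is in hand the integrality and constancy of $F(x+1)-F(x)$ follow immediately from continuity together with the discreteness of the fibers of $e$, and independence of $d$ from the particular choice of lifting $F$ comes from the observation that any two liftings differ by an integer constant, which cancels in the difference $F(x+1)-F(x)$.
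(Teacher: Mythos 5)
Your proof is correct and is the standard covering-space argument: existence of the lifting via path lifting along the covering $e\colon\bbr\to\bbs$ (equivalently, the lifting criterion applied to $f\circ e$ on the simply connected space $\bbr$), followed by the observation that $x\mapsto F(x+1)-F(x)$ is a continuous integer-valued function on a connected space, hence a constant $d$, with independence of $d$ from the choice of lifting because any two liftings differ by a constant integer. The paper states this result without proof, citing the textbook of Hasselblatt and Katok, and your argument is precisely the classical one given there, so there is nothing to compare beyond noting that your write-up fills in the omitted details correctly.
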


\begin{defn} 
	Let $F\in \lift_1$ and $x\in \mathbb{R}$. We define $\overline{\rho}(x)$ and $\underline{\rho}(x)$ as follows $\colon$
	\[\overline{\rho}(x)=\limsup_{n\to\infty}\frac{F^n(x)-x}{n}\quad\text{and}\quad \underline{\rho}(x)=\liminf_{n\to\infty}\frac{F^n(x)-x}{n}\]
	When $\overline{\rho}(x)=\underline{\rho}(x)$ we write only $\rho(x)$. The number
	$\rho(x)$ is called the \emph{rotation number of $x$ with respect to $F$}.
\end{defn}

The following lemma is classical, 
see  e.g.\ \cite[Chapter III, Section 7]{Alseda 2001} for details.
\begin{lem}
	Let $F\in \lift_1$ and be a non-decreasing map. Then for any $x\in\mathbb{R}$, the limit
	\[\lim_{n\to\infty}\frac{F^n(x)-x}{n}\]
	exists and is independent of $x$. Therefore in the later parts of our paper, we can define $\rho(F)$ as $\rho_F(x)$ for any $x\in \bbr$. Moreover, the limit is a rational number if and only if $f$ has periodic points.
\end{lem}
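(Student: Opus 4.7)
The plan is to prove existence and $x$-independence of the limit by an almost-subadditivity argument applied to $a_n := F^n(0)$, and then deduce the rationality equivalence via a fixed-point argument for an auxiliary lift. The key structural input is that $F \in \lift_1$ together with induction on $n$ gives $F^n(x+1) = F^n(x) + 1$, so the function $x \mapsto F^n(x) - x$ is $1$-periodic. Writing $a_n = k_n + \alpha_n$ with $k_n \in \bbz$ and $\alpha_n \in [0,1)$, the commutation relation yields $a_{m+n} = F^m(a_n) = F^m(\alpha_n) + k_n$, while monotonicity of $F^m$ on $[0,1]$ combined with $F^m(1) = F^m(0) + 1$ forces $F^m(\alpha_n) \in [F^m(0), F^m(0) + 1]$. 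Unpacking these inequalities yields the almost-subadditive estimate $|a_{m+n} - a_m - a_n| \leq 1$, so $b_n := a_n + 1$ is subadditive; Fekete's lemma then produces the limit $\rho(F) := \lim_{n \to \infty} a_n/n$, and finiteness follows from the induction $|a_n| \leq n|a_1| + (n-1)$. For the independence claim, pick an arbitrary $x \in \bbr$ and set $k = \lfloor x \rfloor$; monotonicity gives $F^n(k) \leq F^n(x) \leq F^n(k) + 1$, and $F^n(k) = a_n + k$ together with $k - x \in [-1,0]$ yields $|F^n(x) - x - a_n| \leq 1$, so $(F^n(x) - x)/n \to \rho(F)$ as well.

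For the rationality equivalence, one direction is immediate: if $e(x_0)$ has $f$-period $p$, lift to get $F^p(x_0) = x_0 + q$ for some $q \in \bbz$, whence $F^{np}(x_0) = x_0 + nq$ and $\rho(F) = q/p \in \mathbb{Q}$. Conversely, assume $\rho(F) = p/q$ with $\gcd(p,q) = 1$ and form $G := F^q - p$. A routine check shows $G \in \lift_1$ is non-decreasing with $\rho(G) = q\rho(F) - p = 0$, so it suffices to find $x_0 \in \bbr$ with $G(x_0) = x_0$, since then $F^q(x_0) = x_0 + p$ forces $f^q(e(x_0)) = e(x_0)$. If $G$ had no fixed point, continuity and $1$-periodicity of $G(x) - x$ would force it to be of constant sign, hence bounded away from $0$ by some $\varepsilon > 0$ on $[0,1]$ and therefore on all of $\bbr$; iterating would give $|G^n(0)| \geq n\varepsilon$, contradicting $\rho(G) = 0$.

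The main technical obstacle is the almost-subadditivity estimate $|a_{m+n} - a_m - a_n| \leq 1$, which packages together the degree-one property and monotonicity into a single quantitative bound. Once it is in place, existence follows from Fekete's lemma, $x$-independence follows from a one-step monotonicity sandwich, and the rationality equivalence reduces to an elementary dichotomy for the sign of $G(x) - x$.
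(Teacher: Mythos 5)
Your proof is correct and complete. The paper itself gives no proof of this lemma --- it is stated as a classical fact with a citation to Alseda--Llibre--Misiurewicz --- and your argument is precisely the standard one found there: the almost-subadditivity bound $|a_{m+n}-a_m-a_n|\leq 1$ obtained from degree one plus monotonicity, Fekete's lemma for existence, the one-step sandwich $|F^n(x)-x-a_n|\leq 1$ for independence of $x$, and the fixed-point dichotomy for $G=F^q-p$ for the rationality equivalence.
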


\begin{rem}
Let $f$ be an orientation preserving homeomorphism. It is easy to see that for any two liftings $F_1$ and $F_2$ of $f$, $\rho(F_1)-\rho(F_2)\in \bbz$. Therefore we can define $\rho(f)=\{\rho(F)\}$ for any lifting $F$.
\end{rem}
\begin{lem}[{\cite[Lemma 4.7.1]{Alseda 2001}}]\label{lift-entropy} 
Let $f\in \css$ and $F$ be a lifting of $f$.
Assunme that there exists a closed interval $I$ invariant for $F$. Then $\htop(F|_I)\leq\htop(f)$.
\end{lem}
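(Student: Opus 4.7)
The plan is to realize $(I,F|_I)$ as a finite-to-one extension of a subsystem of $(\bbs,f)$ through the natural projection, and then invoke a standard entropy comparison for such extensions.

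First, I would set $J := e(I) \subseteq \bbs$. As the continuous image of a compact set, $J$ is closed in $\bbs$. Moreover, using the defining relation $e\circ F = f\circ e$ of the lifting, we get $f(J) = f(e(I)) = e(F(I)) \subseteq e(I) = J$, so $(J,f|_J)$ is a subsystem of $(\bbs,f)$, and therefore $\htop(f|_J) \leq \htop(f)$ by the standard monotonicity of entropy on closed invariant subsets.

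Next, I would examine the restriction $\pi := e|_I \colon I \to J$. This is a continuous surjection, and the same relation gives $\pi \circ F|_I = f|_J \circ \pi$, so $\pi$ is a semi-conjugacy from $(I,F|_I)$ to $(J,f|_J)$ in the sense of the paper. The crucial additional feature is that $\pi$ is \emph{boundedly} finite-to-one: since $e^{-1}(y) = \{t\in\bbr : t-\tilde y\in\bbz\}$ for any chosen preimage $\tilde y$ of $y$, the fiber $\pi^{-1}(y)$ contains at most $|I|+1$ points, uniformly in $y$.

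The final step is then to invoke the classical fact that a continuous factor map with uniformly bounded fibers preserves topological entropy (this is a special case of Bowen's theorem relating the entropy of an extension to the fiberwise entropy of the base). This yields $\htop(F|_I) = \htop(f|_J) \leq \htop(f)$, as required. The only real obstacle is justifying the entropy preservation for the bounded-to-one factor; if one prefers a self-contained route, one can refine any open cover $\calu$ of $I$ by open sets of diameter less than $1$, on each of which $e$ is a homeomorphism onto its image, so that $\calv := \{e(V) : V\in\calu'\}$ is an open cover of $J$ with $\pi^{-1}\calv$ refining $\calu'$ up to a bounded multiplicity. Comparing $N\bigl(\bigvee_{i=0}^{n-1}F^{-i}\calu'\bigr)$ with $N\bigl(\bigvee_{i=0}^{n-1}f^{-i}\calv\bigr)$ gives only a multiplicative constant depending on the fiber bound, which disappears after dividing by $n$ and letting $n\to\infty$.
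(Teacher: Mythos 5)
The paper itself offers no proof of this lemma---it is imported verbatim from \cite[Lemma 4.7.1]{Alseda 2001}---so the only comparison to make is with the standard argument, and your main line of reasoning is exactly that argument and is correct. Setting $J=e(I)$, checking that $(J,f|_J)$ is a closed invariant subsystem with $\htop(f|_J)\leq\htop(f)$, and observing that $\pi=e|_I\colon (I,F|_I)\to (J,f|_J)$ is a semi-conjugacy whose fibers $I\cap(\tilde y+\mathbb{Z})$ have at most $\lfloor |I|\rfloor+1$ points are all sound steps; Bowen's inequality $\htop(F|_I)\leq\htop(f|_J)+\sup_{y}\htop\bigl(F|_I,\pi^{-1}(y)\bigr)$ then finishes the proof, since finite fibers carry zero entropy. (Note that only this direction is needed; the automatic inequality $\htop(f|_J)\leq\htop(F|_I)$ plays no role here.)

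The one genuine weak spot is the ``self-contained route'' you sketch at the end: as written it does not work. First, for $V$ open in $I$ of small diameter, $e(V)$ need not be open in $J$: with $I=[0,3/2]$ and $V=[0,1/10)$ (open in $I$), the set $e(V)$ contains $e(0)$ but misses $e(1-\varepsilon)\in J$ for all small $\varepsilon>0$. Second, and more seriously, the ``bounded multiplicity'' comparison runs the wrong way: $\pi^{-1}(e(V))$ is a union of up to $\lfloor|I|\rfloor+1$ integer translates of $V$ intersected with $I$, so an element of $\bigvee_{i}(f|_J)^{-i}\calv$ pulls back to a union of up to $(\lfloor|I|\rfloor+1)^{n}$ sets of the relevant form---an exponential, not a bounded, multiplicity---and the naive count would only add $\log(\lfloor|I|\rfloor+1)$ to the entropy rather than $0$. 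Overcoming exactly this is the content of Bowen's theorem, which you correctly identify as ``the only real obstacle''; the sketch does not circumvent it. Since your primary argument already invokes Bowen's theorem (or, equivalently, the bounded-to-one semi-conjugacy lemma in \cite{Alseda 2001}), the proof stands, but the alternative sketch should be dropped or replaced by the separated-set argument underlying Bowen's inequality.
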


\begin{rem}
If $|I|\geq 1$, by lemma~\ref{lem: conjugate and entropy}, we get the conclusion that 
$\htop(F|_I)=\htop(f)$
\end{rem}

In order to describe the properties of the periodic points of interval maps and circle maps, we introduce the following definition.

\begin{defn}
	Let $\bbn_{sh}=\bbn\cup \{2^\infty\}$.
	We define an order of the numbers in $\bbn_{sh}$ as follow: 
	\begin{align*}
	&3 \succ 5\succ 7\succ \cdots  \\
	&\succ2 \cdot 3\succ 2 \cdot 5\succ2 \cdot 7\succ \cdots\\
	 &\succ 2^2 \cdot 3 \succ2^2 \cdot 5 \succ2^2 \cdot 7\succ  \cdots\\
	 &\cdots\\
	 &\succ2^\infty \succ  \cdots \succ2^n \succ  \cdots \succ4\succ2 \succ1,
	\end{align*}
 and call $(\bbn_{sh},\succ)$ \textit{Sharkovsky order}.
	
	Let $a\in \bbn_{sh}$ and denote $ S(a)=\{ m\in\bbn_{sh}\colon a\succeq m\}$.
\end{defn}

The following result is the well-known Sharkovsky theorem.

\begin{thm}[see e.g.\@ \cite{Alseda 2001}]\label{sharkovskii theorem}
	Let $f\in C(I,I)$, then there exists $n\in \bbn_{sh}$ such that $P(f)=S(n)$.
\end{thm}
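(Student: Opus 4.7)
The plan is to follow the classical covering-interval approach. Write $J \to K$ for two closed subintervals of $I$ when $f(J) \supseteq K$, and extend this to loops $J_0 \to J_1 \to \cdots \to J_{n-1} \to J_0$. I would begin by recording two elementary consequences of the intermediate value theorem: (i) if $J \to J$ then $f$ has a fixed point in $J$; and (ii) given any loop of length $n$, there is an $x \in J_0$ with $f^i(x) \in J_i$ for every $i < n$ and $f^n(x) = x$. Lemma (ii) follows by a standard nested-preimages argument combined with the connectedness of $J_0$.

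Next, given a periodic orbit $O = \{x_1 < x_2 < \cdots < x_n\}$ of minimal period $n$, form its \emph{Markov graph} on the vertex set of basic intervals $I_k = [x_k, x_{k+1}]$, with a directed edge $I_j \to I_k$ whenever $f(I_j) \supseteq I_k$. The central ingredient is Stefan's lemma: if $n \geq 3$ is odd and $f$ has no periodic orbit of smaller odd period greater than $1$, then after a suitable relabelling one finds a self-loop $I_{j_0} \to I_{j_0}$ together with a simple cycle $I_{j_0} \to I_{j_1} \to \cdots \to I_{j_{n-2}} \to I_{j_0}$ of length $n-1$ sharing the vertex $I_{j_0}$. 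Granting this structure, for each $k$ with $n \succeq k$ one splices together copies of the self-loop and the long cycle to form a loop of length $k$ in the graph that is not a proper power of a shorter loop, and then (ii) delivers a periodic point of minimal period $k$.

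To descend through the full Sharkovsky order I would use the doubling trick. If $f$ has a point of period $2^s m$ with $m$ odd and $m > 1$, set $g = f^{2^s}$; then $g$ has a periodic point of odd minimal period dividing $m$. Applying the odd case to $g$ yields a $g$-periodic point for every odd integer $\preceq m$, and pulling back under $f$ recovers the corresponding $f$-orbit of period $2^s$ times the $g$-period (the minimality of the new period being checked by a $\gcd$-consideration on the orbit). A separate short induction on $s$ handles the case where $P(f)$ is contained in the powers of two. Since any non-empty subset of $\bbn$ that is downward-closed in $(\bbn_{sh},\succ)$ coincides with $S(n)$ for some $n \in \bbn_{sh}$, this completes the proof.

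The main obstacle is Stefan's lemma. The delicate point is the combinatorial analysis of the cyclic permutation that $f$ induces on the ordered orbit: minimality of the odd period $n$ must be shown to force exactly one self-loop and one simple cycle of length $n-1$ in the Markov graph, while precluding any shorter odd cycle in it. This rests on a careful case split according to the side of the orbit hit by the image of a well-chosen basic interval, together with repeated applications of the intermediate value theorem to locate the edges of the graph.
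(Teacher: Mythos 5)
The paper does not prove this statement at all: it is the classical Sharkovsky theorem, quoted with a pointer to Alsed\`a--Llibre--Misiurewicz, so there is no in-paper argument to compare yours against. Your outline is the standard covering/Markov-graph proof from that reference, and the overall architecture (covering lemmas, Stefan cycle, doubling reduction, identification of tails of the order with the sets $S(n)$) is the right one.

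There is, however, a concrete gap in the step that is supposed to produce all periods forced by an odd period $n\geq 5$. As you state Stefan's lemma, the Markov graph is only guaranteed to contain a self-loop at $I_{j_0}$ and a simple cycle of length $n-1$ through $I_{j_0}$. Every loop you can splice from these two has length $a+b(n-1)$ with $a,b\geq 0$, and any loop of length $k<n-1$ is forced to consist of $k$ copies of the self-loop, which only yields the fixed point in $I_{j_0}$. Consequently your construction produces periods $1$, $n-1$, and all integers $\geq n-1$, but none of the even periods $2,4,\dots,n-3$, all of which lie in $S(n)$; already for $n=5$ you cannot reach period $2$, and for $n=9$ you cannot reach period $6$ (nor can you recover $6$ indirectly from the periods you do obtain, since a period-$10$ point only gives $f^2$ a point of period $5$, and $5\not\succeq 3$). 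The full Stefan structure includes the additional chords from $I_{j_{n-2}}$ back to the even-indexed intervals $I_{j_{2i}}$, which supply simple cycles of every even length $\leq n-1$; these must be part of the lemma you prove. A second, smaller issue is in the doubling step: if $y$ has $g=f^{2^s}$-period $\ell$, the $\gcd$ computation only gives that its $f$-period is $2^t\ell$ for some $0\leq t\leq s$, not $t=s$; when $t<s$ the resulting $f$-period sits \emph{higher} in the Sharkovsky order than $2^sm$, so you need an extra induction (or a direct "period $2^sm$ forces period $2^{s+1}$ and period $2^s(m+2)$" argument) to close this case.
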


For interval maps with zero topological entropy, we characterize the properties of periodic points as follow.

\begin{thm}[see e.g.\@ \cite{Alseda 2001}] \label{thm:interval-entropy-0}
	Let $f\in C(I,I)$ with $\htop(f)=0$. 
	Then there exists $n\in \bbn_{sh}$ such that $2^\infty \succeq n$ and $P(f)=S(n)$.
\end{thm}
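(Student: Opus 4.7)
The plan is to combine Sharkovsky's theorem (Theorem~\ref{sharkovskii theorem}) with the classical fact that an interval map with a periodic orbit of non-power-of-two period has strictly positive topological entropy. By Theorem~\ref{sharkovskii theorem} alone there is already some $n\in\bbn_{sh}$ with $P(f)=S(n)$; the only new content is to show that under the hypothesis $\htop(f)=0$, this $n$ satisfies $2^\infty\succeq n$, i.e.\ $n$ is either a power of $2$ or $n=2^\infty$.

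I would argue the contrapositive. Suppose $n\succ 2^\infty$; then $n$ is of the form $2^k\cdot m$ with $k\geq 0$ and $m\geq 3$ odd. In particular $2^k\cdot m\in P(f)$, so $f$ has a periodic orbit of period $2^k\cdot m$. Pick a point $y$ on this orbit; then $y$ is periodic for $f^{2^k}$ with exact odd period $m\geq 3$. Since $\htop(f^{2^k})=2^k\htop(f)$, producing a positive lower bound for $\htop(f^{2^k})$ produces one for $\htop(f)$, so the problem reduces to the following claim: any $g\in C(I,I)$ possessing a periodic point of odd period $m\geq 3$ has $\htop(g)>0$.

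This last claim is the main obstacle and the genuine content of the theorem. The standard route is the \v{S}tefan cycle: enumerate the periodic orbit in the natural order of $I$, record how $g$ permutes the resulting partition points, and analyze the associated Markov graph on the $m-1$ consecutive subintervals. Because $m$ is odd and $\geq 3$, this graph necessarily contains a loop that is not a pure rotation. Translating such a loop back to the interval produces two compact subintervals $J_1,J_2$ with disjoint interiors and an iterate $g^j$ (with $j\leq m$) for which $g^j(J_i)\supset J_1\cup J_2$ for $i=1,2$. This horseshoe yields $\htop(g^j)\geq\log 2$ and hence $\htop(g)\geq(\log 2)/j>0$, which chains back up to $\htop(f)>0$, contradicting the hypothesis. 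Since the \v{S}tefan-cycle / horseshoe construction is classical and worked out in detail in \cite{Alseda 2001}, I would invoke it rather than reproduce the combinatorics in full.
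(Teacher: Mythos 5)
The paper does not prove this statement at all: it is quoted as a classical fact with a pointer to \cite{Alseda 2001}, so there is no in-paper argument to compare against. Your proposal is a correct reconstruction of the standard proof from that reference --- Sharkovsky's theorem gives $P(f)=S(n)$, the period-power formula reduces a hypothetical period $2^k m$ ($m\geq 3$ odd) to an odd period for $f^{2^k}$, and the \v{S}tefan-cycle/horseshoe argument (which could also be closed off with the paper's own Lemma~\ref{lem:horseshoe-positive-entorpy}) forces positive entropy, contradicting $\htop(f)=0$.
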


The properties of periodic points of circle maps are more complicated, we just consider the case of circle maps with zero topological entropy.

In the following theorem, see Theorem 3.5.3 and Theorem 3.6.8 of the textbook\cite{Alseda 2001} when $\degg(f)=0$ and $\degg(f)=-1$ $\degg(f)=-1$. About the case when $\degg(f)=1$, we refer the readers to Corollary 3.9.7 and Theorem 3.10.1.  
\begin{thm} \label{thm:deg-1-period}
	Let $f\in \css$ with $|\deg(f)|\leq 1$, $\htop(f)=0$ and $\per(f)\neq\emptyset$.
	Then there exist $k\in\bbn$ and $n\in \bbn_{sh}$ such that $2^\infty \succeq n$
	and 
	$P(f)=kS(n)$.
	Moreover if $\deg(f)=0$ or $\deg(f)=-1$, then $f$ has a fixed point, therefore $k=1$.
\end{thm}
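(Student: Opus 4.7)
The plan is a case analysis on $\degg(f)\in\{-1,0,1\}$, reducing each case to an interval map and then invoking the Sharkovsky-type classification (Theorem~\ref{thm:interval-entropy-0}).

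The case $\degg(f)=0$ is the most direct. Pick a lifting $F$ of $f$, so that $F(x+1)=F(x)$; then $F$ is $\bbz$-periodic with bounded image $J:=F(\bbr)$, a closed interval, and the restriction $h:=F|_J\colon J\to J$ is a continuous interval self-map. Lemma~\ref{lift-entropy} gives $\htop(h)\le\htop(f)=0$, and since $h$ sends a closed interval into itself it has a fixed point, so $1\in P(f)$ and $k=1$. The $\bbz$-periodicity of $F$ yields a period-preserving correspondence between $\per(f)$ and $\per(h)$: if $y\in\per(f)$ has minimal period $\ell$ and $x\in J$ lifts $y$, then $F^\ell(x)=x+m$ for some $m\in\bbz$, and because $F^\ell$ is also $\bbz$-periodic the shifted lift $x+m\in J$ is fixed by $F^\ell$; conversely $h$-periodic points project to $f$-periodic points of the same minimal period. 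Theorem~\ref{thm:interval-entropy-0} applied to $h$ therefore gives $P(f)=P(h)=S(n)$ with $2^\infty\succeq n$.

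For $\degg(f)=-1$, one first produces a fixed point directly: the lifting satisfies $F(x+1)=F(x)-1$, so $F(x)-x$ decreases by $2$ under each unit translation and hence must vanish, producing $p_0\in\fix(f)$. Then $g:=f^2$ has degree $1$ and still satisfies $\htop(g)=0$; applying the degree-$1$ case to $g$ and relating $P(f)$ to $P(g)$ through the elementary observation that the $f$-period of any periodic point is either the $g$-period or twice it transfers the conclusion, with $k=1$ because $1\in P(f)$.

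The main case is $\degg(f)=1$. Under the hypotheses $\htop(f)=0$ and $\per(f)\neq\emptyset$, the structure theory of zero-entropy degree-one circle maps from \cite{Alseda 2001} shows that all points share a common rational rotation number $\rho(F)=p/q$ in lowest terms and that a bounded-deviation property holds. Set $\tilde F:=F^q-p$, which is a lifting of $f^q$ with rotation number $0$; bounded deviation then gives a compact $\tilde F$-invariant interval $J$ containing a fixed point of $\tilde F$. Lemma~\ref{lift-entropy} yields $\htop(\tilde F|_J)=0$, so Theorem~\ref{thm:interval-entropy-0} gives $P(\tilde F|_J)=S(n)$ with $2^\infty\succeq n$. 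Any periodic point of $f^q$ lifts to some $\tilde x\in J$ with $\tilde F^k(\tilde x)=\tilde x+m$, and $\rho(\tilde F)=0$ forces $m=0$, so periods are preserved and $P(f^q)=S(n)$. Finally, because $\rho(f)=p/q$ is the common rotation number, every $y\in\per(f)$ has $f$-period equal to $q$ times its $f^q$-period, giving $P(f)=qS(n)$. The principal obstacle lies precisely here: the uniqueness of the rotation number and the bounded-deviation step needed to construct $J$ are the nontrivial ingredients, and these are the facts drawn from Chapter~3 of \cite{Alseda 2001} rather than from earlier material in the present paper.
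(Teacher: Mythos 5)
First, a point of reference: the paper does not prove this theorem at all --- it is quoted from the textbook of Alseda--Llibre--Misiurewicz, with pointers to Theorems 3.5.3 and 3.6.8 (degrees $0$ and $-1$) and Corollary 3.9.7 and Theorem 3.10.1 (degree $1$). So there is no in-paper argument to compare against, and your proposal is an attempted reconstruction. Your overall strategy --- reduce to an interval map via a lifting and an invariant interval, then apply Theorem~\ref{thm:interval-entropy-0} --- is the right one and is exactly how the paper handles everything else about circle maps (cf.\ Lemma~\ref{lem:I-extension} and Lemma~\ref{lem:F-Per-f}). Your degree-$0$ case is essentially complete: the fact $|F(\bbr)|<1$, which makes $e$ injective on $J$ and kills the integer shift $m$ in both directions of the period correspondence, follows from non-extensibility (Lemma~\ref{lem:zero-entropy-non-ext}). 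In the degree-$1$ case you correctly isolate the two hard inputs (degeneracy of the rotation interval for zero-entropy maps, and the bounded-deviation construction of the invariant interval for $F^q-p$) and defer them to \cite{Alseda 2001}; since the paper itself only cites that book, this is defensible, though the paper's own Lemma~\ref{lem:I-extension} shows the invariant interval can be obtained more cheaply from non-extensibility once one passes to the iterate with a fixed point.

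The genuine gap is in the degree-$(-1)$ case. From $P(f^2)=S(n')\subseteq\{2^i\colon i\geq 0\}$ and the observation that the $f$-period of a point is either its $f^2$-period or twice it, you obtain only $P(f)\subseteq\{2^i\colon i\geq 0\}$ together with $1\in P(f)$; you do \emph{not} obtain that $P(f)$ is downward closed in the Sharkovsky order, which is what $P(f)=S(n)$ asserts. Concretely, nothing in your argument excludes $P(f)=\{1,4\}$: a point of $f$-period $4$ yields $2\in P(f^2)$, hence points of $f^2$-period $2$, but each such point may have $f$-period $4$ rather than $2$, and the $f^2$-fixed points may all already be $f$-fixed. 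Ruling this out needs an argument specific to degree $-1$, which is precisely the content of Theorem 3.6.8 of \cite{Alseda 2001}. Be warned also that the period correspondence through the lifting is treacherous in this case: with $F$ and $I=[a,b]$ as in Lemma~\ref{lem:I-extension}(3), $F$ swaps $[a,b-1]$ and $[a+1,b]$, so a fixed point of $f$ typically lifts to a point of $F$-period $2$ (one gets $F(x)=x+1$ and $F^2(x)=x$); minimal periods upstairs and downstairs do not match, so any transfer between $P(F|_I)$ and $P(f)$ must be done with care rather than asserted.
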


\begin{rem}\label{rem:deg-1-period-interval}
	Let $f\in \css$ with $|\deg(f)|\leq 1$ and $\htop(f)=0$. By Lemma~\ref{thm:deg-1-period}, there exist $k\in\bbn$ and $n\in \bbn_{sh}$ 
	such that $2^\infty \succeq n$
	and $P(f)=kS(n)$. In other words the minimal period of the periodic points of $f$ is $k$, and the other periods are all multiple of $k$.
	If $J\subset \bbs$ is a periodic interval with period $m>1$,
	then there exists $x\in J$ such that $x$ is a perodic point with period $m$. Thus $m$ is also the multiple of $k$.
\end{rem}

\subsection{Circle maps without periodic points}\label{subsection:Circle maps}
In this subsection we recall some properties of the circle maps without periodic points. 

\begin{defn}
Let $\alpha\in [0,1)$ and $R_{\alpha}\colon [0,1)\rightarrow [0,1)$,  $x\rightarrow x+\alpha \pmod 1$. We call $R_{\alpha}$ the rotation of
the circle with rotation number $\alpha$.  
\end{defn}

\begin{lem}[\cite{J. Auslander}, see also {\cite[Section 4.1]{Boris Hasselblatt 2003}}]\label{thm:Poincare} 
Let $f\in\css$ without periodic points. Assume that $f$ is a homeomorphism. Then for any $ x,y\in \bbs$, $\omega(x,f)=\omega(y,f)$. Denote $W=\omega(x,f)$ and $\rho(f)$ the rotation number of $f$, 
$\rho(f)\notin \mathbb{Q}$ since $f$ has no periodic points. In this case $W$ is a perfect set and exactly one of the following alternatives holds $\colon$
	\begin{enumerate}
		\item if $f$ is topological transitive, $W=\bbs$ and $f$ topological conjugate to $R_{\rho(f)}$.
		\item if $f$ is not topological transitive, $W$ is a nowhere dense set and $f$ topological semi-conjugate to $R_{\rho(f)}$.
	\end{enumerate} 
\end{lem}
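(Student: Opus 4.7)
The plan is to follow Poincaré's classical classification theorem. I would begin by observing that a homeomorphism of $\bbs$ without periodic points has no fixed points, and since every orientation-reversing circle homeomorphism has at least two fixed points, $f$ must be orientation-preserving with $\degg(f)=1$. Consequently there exists a lifting $F\in\lift_1$, and $F$ is a strictly increasing bijection of $\bbr$ with $F(x+1)=F(x)+1$. By the lemma already established in the paper, the rotation number $\rho(F)=\lim_{n\to\infty}(F^n(x)-x)/n$ exists, is independent of $x$, and is irrational because $\per(f)=\emptyset$.

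The central step is the construction of a semi-conjugacy with the rotation $R_{\rho(f)}$. Fixing a lift $\tilde x_0$ of some $x_0\in\bbs$, I would consider the countable set $A=\{F^m(\tilde x_0)+k\colon m,k\in\bbz\}$. The key order-preserving fact is that for all $m,n,k,l\in\bbz$,
\[
F^m(\tilde x_0)+k < F^n(\tilde x_0)+l \iff m\rho(F)+k < n\rho(F)+l;
\]
its proof reduces to showing that for each $p\in\bbn$ and $q\in\bbz$ the continuous map $F^p-\mathrm{id}$ never takes the value $q$ (else $f$ would have a period-$p$ point), hence has constant sign relative to $q$, and iterating compares this sign with that of $p\rho(F)-q$. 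This lets me define $\phi\colon A\to\bbr$ by $\phi(F^m(\tilde x_0)+k)=m\rho(F)+k$ and extend $\phi$ to a continuous non-decreasing surjection $H\colon\bbr\to\bbr$ via $H(t)=\sup\{\phi(a)\colon a\in A,\;a\leq t\}$. A direct check gives $H(x+1)=H(x)+1$ and $H\circ F=T_{\rho(F)}\circ H$ where $T_\rho(x)=x+\rho$, so $H$ descends to a continuous surjection $h\colon\bbs\to\bbs$ with $h\circ f=R_{\rho(f)}\circ h$.

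With $h$ in hand I would identify $W$. Because $R_{\rho(f)}$ is minimal on $\bbs$, we have $\omega(h(x),R_{\rho(f)})=\bbs$ for every $x\in\bbs$, and Lemma~\ref{lem:W-limit set-conjugate} gives $h(\omega(x,f))=\bbs$. The fibers of $h$ are either singletons or nondegenerate closed intervals, the latter being precisely the wandering intervals of $f$, which are pairwise disjoint across all $f$-iterates. A standard argument then shows these wandering intervals fill out the complement of $\omega(x,f)$ for any $x$, so $W:=\omega(x,f)$ is independent of $x$, and $W$ is perfect because it is minimal and cannot be a single periodic orbit.

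For the dichotomy, if $f$ is topologically transitive then some orbit is dense in $\bbs$, forcing $W=\bbs$; hence $h$ has no nontrivial constancy interval and is a homeomorphism, which provides the conjugacy with $R_{\rho(f)}$. Otherwise $W\subsetneq\bbs$, and a standard argument using that $f$ permutes the components of $\bbs\setminus W$ without fixing any (since $f$ has no periodic points) shows $\bbs\setminus W$ has infinitely many components with lengths tending to zero, so $W$ is nowhere dense. I expect the main obstacle to be the order-preserving property for $\phi$ on $A$, as this is precisely where the irrationality of $\rho(F)$ enters in a nontrivial way and is what underlies the whole construction; the remaining steps, while requiring care, are routine manipulations once that key lemma is in hand.
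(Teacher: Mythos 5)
The paper does not prove this lemma at all: it is quoted as a classical result (Poincar\'e's classification of circle homeomorphisms without periodic points) with pointers to Auslander--Katznelson and to Section 4.1 of Hasselblatt--Katok, so there is no in-paper argument to compare against. Your proposal reconstructs precisely the standard proof from those references: orientation-reversing homeomorphisms have fixed points, so $\degg(f)=1$; the order-comparison lemma for $F^{m}(\tilde x_0)+k$ versus $m\rho(F)+k$ (whose proof via the sign of $F^{p}-\mathrm{id}-q$ you state correctly, and which is indeed where irrationality enters); the monotone extension $H$ and the induced semi-conjugacy $h$; and the dichotomy according to whether $h$ has nondegenerate fibers. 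The identification of $W$ is compressed but sound: interiors of nondegenerate fibers are wandering and hence miss every $\omega$-limit set, while $h(\omega(x,f))=\bbs$ together with the maximality of each fiber forces both endpoints of every nondegenerate fiber, and every singleton fiber, to lie in $\omega(x,f)$; this gives independence of $x$, and perfectness follows since an infinite minimal set has no isolated points. The one step that does not follow as written is the last one: ``$\bbs\setminus W$ has infinitely many components with lengths tending to zero, so $W$ is nowhere dense'' is a non sequitur (a fat Cantor set together with an arc has a complement of exactly that form yet is not nowhere dense). The intended conclusion is immediate from what you already have: $\partial W$ is a nonempty closed invariant subset of the minimal set $W$, hence $\partial W=W$, so $W$ has empty interior. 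With that one-line repair your argument is complete and is the same proof the cited sources give.
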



\begin{lem}[\cite{J. Auslander}]\label{lem:not-homermorphism and nowhere dense limit set}
	Let $f\in\css$ without periodic points. Assume that $f$ is not a homeomorphism. Then for any $x,y\in \bbs$, $\omega(x,f)=\omega(y,f)$. Denote $W=\omega(x,f)$. Then $W$ is a nowhere dense perfect set. Moreover, $f|_W$ is orientation preserving or orientation reserving.
\end{lem}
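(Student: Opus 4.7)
The plan is to exploit the absence of periodic points to reduce $f$ to an irrational rotation through a monotone semi-conjugacy, and then read off the asserted properties of $\omega$-limit sets from those of the rotation. First I would pin down the degree. The hypothesis that $f$ has no fixed points rules out $\degg(f)\in\{0,-1\}$, because for either value and any lift $F$, the function $G(x)=F(x)-x$ satisfies $G(x+1)=G(x)+\degg(f)-1$, so after an integer shift of $F$ one has $G(0)\geq 0>G(1)$ and the intermediate value theorem produces a fixed point of $f$. On the other hand, $|\degg(f)|\geq 2$ forces $\htop(f)\geq\log|\degg(f)|>0$, and positive topological entropy on the circle always produces periodic points. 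Hence $\degg(f)=1$, and we fix a lift $F\in\lift_1$.

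Next, I would establish that $F$ has a well-defined irrational rotation number. Since $F$ is not assumed monotone, I would sandwich it between the non-decreasing degree-$1$ maps $F^+(x)=\sup_{y\leq x}F(y)$ and $F^-(x)=\inf_{y\geq x}F(y)$, yielding the rotation interval $[\rho(F^-),\rho(F^+)]$ of $F$. Any rational $p/q$ in this interval would force a periodic orbit of $f$ of period dividing $q$ (classical, see~\cite{Alseda 2001}), contradicting the hypothesis; hence the rotation interval collapses to a single irrational $\alpha$. A standard construction (based on the bounded-deviation estimate $|F^n(x)-x-n\alpha|\leq C$) then produces a continuous non-decreasing $H\colon\bbr\to\bbr$ with $H(x+1)=H(x)+1$ and $H\circ F=T_\alpha\circ H$, where $T_\alpha(x)=x+\alpha$; this descends to a continuous monotone degree-$1$ semi-conjugacy $h\colon\bbs\to\bbs$ from $f$ to the irrational rotation $R_\alpha$.

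Since $R_\alpha$ is minimal, Lemma~\ref{lem:W-limit set-conjugate} gives $h(\omega(x,f))=\bbs$ for every $x\in\bbs$. The fibres of $h$ are either points or closed arcs; let $Z\subset\bbs$ be the at-most-countable set of values with non-degenerate fibre and let $W$ be the complement in $\bbs$ of the union of the interiors of these arcs. Using that $Z$ is $R_\alpha$-invariant and that $f$ permutes the collapsed arcs compatibly, I would verify that every $\omega(x,f)$ is a minimal $f$-invariant subset of $W$ projecting onto $\bbs$ under $h$, and that the unique such set is $W$, yielding the stated independence of the base point. The set $W$ is then closed, nowhere dense (because the removed open arcs project to a dense countable subset of $\bbs$ under the minimal rotation) and perfect (because a minimal infinite set in a compact metric space has no isolated points). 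Finally, the relation $h\circ f=R_\alpha\circ h$ forces $f|_W$ to permute the pairs of endpoints of the removed arcs coherently, and hence to act on the cyclic order of $W$ either as an orientation-preserving or as an orientation-reversing map.

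The main obstacle is the rotation-number and semi-conjugacy step: without monotonicity of $F$ one cannot simply invoke the classical Poincar\'e theory, so showing that the rotation interval degenerates to a single irrational point and then constructing the semi-conjugacy $h$ is where the technical heart of the argument sits. Once $h$ has been built, all the properties of $W$ follow by routine bookkeeping through the semi-conjugacy.
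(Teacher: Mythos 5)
First, a point of comparison: the paper does not actually prove this lemma --- it is quoted from Auslander--Katznelson \cite{J. Auslander} --- so there is no internal argument to measure yours against, only the cited source. Judged on its own terms, your preliminary steps are sound: the intermediate-value argument on $G(x)=F(x)-x$ does force $\degg(f)=1$ (indeed the same computation handles every degree $d\neq 1$ at once, since $G(x+1)-G(x)=d-1\neq 0$ makes $G([0,1])$ contain an integer, so the entropy detour for $|\degg(f)|\geq 2$ is an unnecessary appeal to a much harder theorem), and sandwiching $F$ between the monotone maps $F^{\pm}$ correctly shows that the rotation interval is a single irrational $\alpha$ and that $|F^{n}(x)-x-n\alpha|<1$.

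The genuine gap is exactly the step you flag as ``the technical heart'' and then do not carry out: producing a continuous \emph{non-decreasing} $H$ with $H\circ F=T_{\alpha}\circ H$ from the bounded-deviation estimate. The construction you call standard is standard only for homeomorphisms: Poincar\'e's argument defines $H$ on a single orbit by $F^{n}(x_{0})+m\mapsto n\alpha+m$ and extends monotonically, which uses injectivity and monotonicity of $F$ essentially; and the sup-type formula $H(x)=\sup_{n\geq 0}\bigl(F^{n}(x)-n\alpha\bigr)$ is neither obviously monotone (the iterates $F^{n}$ are not monotone here) nor obviously continuous, nor does it satisfy the conjugacy equation without further work. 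This omission is not cosmetic, because the known route to that semi-conjugacy for non-invertible maps without periodic points --- the one in \cite{J. Auslander}, and the one this paper itself relies on in Case 2 of the proof of Theorem~\ref{thm:main-result2} --- goes in the opposite direction: one first proves, by a direct argument with free arcs, that all $\omega$-limit sets coincide and that $f$ restricted to that set is injective and preserves the circular order, and only then extends $f|_{W}$ to a circle homeomorphism to which Poincar\'e theory applies. In other words, the structure of $W$ that you propose to deduce from $h$ is precisely the input normally needed to build $h$, so as written your plan defers the entire content of the lemma to an unproved step. Even granting $h$, several of your concluding assertions need arguments you only gesture at: you must note that $Z\neq\emptyset$ (else $h$ is a conjugacy and $f$ a homeomorphism, contrary to hypothesis); the density in $\bbs$ of the projections of the collapsed arcs requires an invariance property of $Z$ under $R_{\alpha}$ that is not immediate when $f$ is not invertible; and to get $\omega(x,f)=W$ independently of $x$ you must show that \emph{both} endpoints of every collapsed arc lie in every $\omega$-limit set, not merely one point of each fibre.
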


\begin{lem}
[{\cite[Theorem 1]{R. Hric 2000}}]\label{lem:f-entropy-no-period-point}
Let $f\in\css$. Then $f$ is not Li-Yorke chaotic if and only if $f$ is topological null. In particular if $f$ has no periodic points, then $f$ is topological null.
\end{lem}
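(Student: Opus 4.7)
The first equivalence is exactly Hric's theorem from~\cite{R. Hric 2000}, and I would quote it directly. The only content that then needs genuine justification is the ``in particular'' clause: a circle map $f$ without periodic points is not Li-Yorke chaotic, so topological nullness follows from the first equivalence.

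Assume $f\in\css$ has no periodic points. I would split the argument on whether $f$ is a homeomorphism. When $f$ is a homeomorphism, Lemma~\ref{thm:Poincare} supplies a common $\omega$-limit set $W$ for all orbits, together with either a conjugacy of $f$ to an irrational rotation $R_{\rho(f)}$ (the transitive case) or only a semi-conjugacy $\pi\colon\bbs\to\bbs$ onto $R_{\rho(f)}$ (the Denjoy case). When $f$ is not a homeomorphism, Lemma~\ref{lem:not-homermorphism and nowhere dense limit set} provides a common nowhere dense perfect $\omega$-limit set $W$ on which $f|_W$ is orientation preserving or orientation reversing.

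In the transitive homeomorphism case $R_{\rho(f)}$ is an isometry, hence equicontinuous, so it has no non-asymptotic proximal pair, and conjugacy transports this to $f$. In the Denjoy subcase, any pair $\langle x,y\rangle$ with $\pi(x)\neq\pi(y)$ satisfies $d(f^n(x),f^n(y))\geq\delta$ for some $\delta>0$ independent of $n$, by uniform continuity of $\pi$ and the isometric action of $R_{\rho(f)}$; hence every proximal pair of $f$ lies in a single fibre of $\pi$. Since the non-trivial fibres of $\pi$ are the countably many closed gaps of $W$, the proximal pairs are confined to a countable union of closed intervals, and a short analysis using the fact that $f$ cyclically permutes these gaps without periodic orbits rules out an uncountable scrambled set. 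The non-homeomorphism case proceeds analogously by restricting to $W$, exploiting the orientation structure of $f|_W$, and noting that every point outside $W$ is attracted to $W$.

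The main obstacle I foresee is the Denjoy subcase: one must verify that the semi-conjugacy $\pi$ really does force all proximal pairs into its countably many non-trivial fibres, and that the internal dynamics of these fibres cannot create an uncountable scrambled set. The key ingredients are Lemma~\ref{lem:W-limit set-conjugate} controlling $\omega$-limit sets under $\pi$, the absence of scrambled pairs for the equicontinuous factor $R_{\rho(f)}$, and the combinatorial fact that $f$ permutes the gaps of $W$ as an infinite orbit. Once these are in place, the first equivalence of the lemma finishes the proof.
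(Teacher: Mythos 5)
The paper does not prove this lemma at all: it is imported verbatim as Theorem~1 of Hric's paper \cite{R. Hric 2000}, and the ``in particular'' clause is treated as part of that citation (the paper later gives an independent, quantitative argument for the no-periodic-point case in the proof of Theorem~\ref{thm:main-result2}, via $(A,n,\epsilon)$-separated and spanning sets, but that is not invoked here). So your decision to quote Hric for the equivalence and to supply an argument only for the ``in particular'' clause matches the paper's economy, and your overall route --- no periodic points $\Rightarrow$ no Li--Yorke pair $\Rightarrow$ null by the quoted equivalence, split according to whether $f$ is a homeomorphism using Lemma~\ref{thm:Poincare} and Lemma~\ref{lem:not-homermorphism and nowhere dense limit set} --- is a correct and natural reconstruction.

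Two places in your sketch need to be tightened. In the Denjoy subcase, ``the proximal pairs are confined to a countable union of closed intervals'' does not by itself rule out an uncountable scrambled set, since a single interval is uncountable; the argument that actually closes this is that each nontrivial fibre of $\pi$ is the closure of a gap $J$ of $W$, the gaps $f^n(J)$ are pairwise disjoint (a recurrence of a gap would produce a periodic point), hence $\sum_n |f^n(J)|\leq 1$ and $|f^n(J)|\to 0$, so any two points in the same fibre are \emph{asymptotic} and there are no scrambled pairs whatsoever. In the non-homeomorphism case, ``proceeds analogously'' hides the one genuinely new step: you must show every point off $W$ is asymptotic to a specific point of $W$, which again follows because each contiguous interval of $W$ is wandering and its images (contiguous intervals or points of $W$) are pairwise disjoint, so their diameters tend to $0$; combined with the absence of Li--Yorke pairs inside $W$ (via the orientation-preserving extension of $f|_W$ to a circle homeomorphism without periodic points, reducing to your first case) this finishes the argument. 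Both gaps are fillable with the wandering-interval length estimate, which is the single ingredient your sketch does not name explicitly.
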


\subsection{The structure of $\omega$-limit sets of circle maps with zero topological entropy}

\begin{defn}
	Let $f\in C(M,M)$.
	A closed subinterval $J$ of  $M$ is called \emph{periodic}
	if there exists a positive integer $n$ such that 
	$J$, $f(J)$, $\dotsc$, $f^{n-1}(J)$ are pairwise
	disjoint and $f^n(J)=J$.
	In this case, $n$ is the \emph{period} of $J$ and $\lbrace f^i(J)\rbrace_{i=1}^{n-1} $ 
	is called a \emph{cycle of intervals}.
\end{defn}


The following result is due to Sharkovsky,
see e.g.\@ Proposition 5.23 and Remarks on graph maps in page 132 of \cite{R17}.
\begin{lem}\label{lem:omega-x}
	Let $f\in C(M,M)$ with $\htop(f)=0$.
	For any $x\in M$, either $\omega(x,f)$ is a periodic orbit
	or $\omega(x,f)$ is infinite and $\omega(x,f)\cap \per(f)=\emptyset$.
\end{lem}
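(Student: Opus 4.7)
The plan is to dichotomize on whether $\omega(x,f)$ is finite or infinite. In the finite case I will show (using only compactness and continuity) that $\omega(x,f)$ is automatically a single periodic orbit. In the infinite case I will use $\htop(f)=0$ together with the Sharkovsky--Misiurewicz structure theory to force $\omega(x,f)\cap\per(f)=\emptyset$.

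For the finite case, I first establish strong invariance $f(\omega(x,f))=\omega(x,f)$: forward invariance is automatic, and if $f^{n_k}(x)\to y$, extracting a convergent subsequence of $f^{n_k-1}(x)$ yields $z\in\omega(x,f)$ with $f(z)=y$ by continuity. Hence $f$ restricts to a permutation of the finite set $\omega(x,f)$, so every point is periodic. To collapse these into a single orbit, I would choose $\varepsilon>0$ small enough that the open $\varepsilon$-balls around distinct points of $\omega(x,f)$ are pairwise disjoint; for all large $n$, $f^n(x)$ lies in a unique such ball, say the one around $g(n)\in\omega(x,f)$. Continuity of $f$ then forces $g(n+1)=f(g(n))$ for large $n$, so the sequence $(g(n))$ traces a single $f$-cycle, and this cycle must exhaust $\omega(x,f)$ because every $y\in\omega(x,f)$ is approached by some $f^{n_k}(x)$, giving $g(n_k)=y$ eventually.

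For the infinite case, suppose toward contradiction that $\omega(x,f)$ is infinite and contains a periodic point $p$ of period $m$. Using the decomposition $\omega(x,f)=\bigcup_{i=0}^{m-1}\omega(f^i(x),f^m)$, together with the fact $f(\omega(f^i(x),f^m))=\omega(f^{i+1}(x),f^m)$ (so the summands have equal cardinality), each summand is infinite; pick $i_0$ with $p\in\omega(f^{i_0}(x),f^m)$. Since $\htop(f^m)=m\,\htop(f)=0$, I may replace $(f,x)$ by $(f^m,f^{i_0}(x))$ and reduce to the case that $p$ is a fixed point and $\omega(x,f)$ is an infinite set containing $p$. Now I invoke Theorems~\ref{thm:interval-entropy-0} and~\ref{thm:deg-1-period} to deduce that the set of periods of $f$ is (a multiple of) $S(n)$ for some $n\preceq 2^\infty$; the classical Misiurewicz--Smital theory on the interval, transferred to $\bbs$ via Lemma~\ref{lift-entropy}, then extracts a strictly nested sequence of cycles of intervals $\mathcal J_1\supset\mathcal J_2\supset\cdots$ with periods tending to infinity, whose intersection is a Cantor-type solenoid $Q$ containing every infinite $\omega$-limit set but no periodic point. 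This forces $\omega(x,f)\subseteq Q$, contradicting $p\in\omega(x,f)\cap\per(f)$.

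The main obstacle is the solenoid construction on $\bbs$ in the infinite case. When $\degg(f)\in\{-1,0\}$, or $\degg(f)=1$ with a periodic point, Theorem~\ref{thm:deg-1-period} supplies enough structure to cut $\bbs$ at a fixed point or periodic orbit and reduce to the known interval theory. The remaining sub-case $\degg(f)=1$ with $\per(f)=\emptyset$ cannot actually arise in the argument above (since we assumed $p\in\per(f)$), but for context it is handled directly by Lemmas~\ref{thm:Poincare} and~\ref{lem:not-homermorphism and nowhere dense limit set}, which identify $\omega(x,f)$ as a perfect minimal set and give $\omega(x,f)\cap\per(f)=\emptyset$ trivially. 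The bookkeeping across the three degree cases is the only genuine complication beyond the classical Misiurewicz--Smital solenoid argument.
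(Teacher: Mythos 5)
First, a point of reference: the paper does not prove this lemma at all --- it is stated as a folklore result of Sharkovsky with a pointer to \cite{R17} --- so there is no proof of record to match yours against. Judged on its own terms, your finite case is correct and complete: strong invariance of $\omega(x,f)$ makes $f$ a permutation of that finite set, and the shadowing-by-nearest-point argument (with $\varepsilon$ below a third of the minimal gap and a uniform-continuity modulus for $f$) does collapse the set to a single cycle. Note that this half uses neither $\htop(f)=0$ nor the one-dimensionality of $M$; it is a general fact.

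The infinite case, however, is not really a proof but a citation in disguise, and the citation is attached to the wrong transfer mechanism. The ``strictly nested sequence of cycles of intervals with periods tending to infinity whose intersection contains $\omega(x,f)$ but no periodic point'' is exactly Lemma~\ref{rem: period portion} of the paper (Blokh's decomposition), whose item (3) literally asserts the conclusion you are trying to establish; once you invoke it, the entire reduction to a fixed point of $f^m$ is superfluous, and the actual content remains unproven. If you instead want to derive the circle case from the classical interval theory, Lemma~\ref{lift-entropy} is not the right tool --- it compares entropies only and says nothing about $\omega$-limit sets. The transfer has to go through Lemma~\ref{lem:I-extension} (to get $F$ and $I$ with $F(I)\subset I$ and $e(I)=\bbs$), Lemma~\ref{lem:W-limit set-conjugate} (to get $e(\omega(\tilde z,F))=\omega(z,f)$) and Lemma~\ref{lem:F-Per-f}; and one genuinely has to check that the specific preimage $\tilde p\in\omega(\tilde z,F)$ of your periodic point $p$ is itself $F$-periodic, since a priori $F$ could send $\tilde p$ to $\tilde p\pm1$ (this does hold, but it needs the case analysis on $\degg(f)$ from Lemma~\ref{lem:I-extension}, not just continuity). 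Finally, there is no single solenoid $Q$ containing every infinite $\omega$-limit set --- the nested cycles depend on $x$ --- so that sentence should be localized. None of these defects is fatal, and the overall strategy (iterate to fix $p$, cut the circle, quote the interval structure theory) is sound; but as written the key step rests on quoting a statement that already subsumes the lemma.
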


The following result can be easily confirmed by the definition.

\begin{lem}\label{lem:omega-limit-periodic-interval}
	Let $f\in C(M,M)$ with $\htop(f)=0$ and $J$ be a periodic subinterval of $M$ with period $n$.
	For any $x\in M$, either $\omega(x,f)\subset \bigcup_{i=0}^{n-1}f^i(J)$ or $\omega(x,f)\cap \bigcup_{i=0}^{n-1}f^i(J)=\emptyset$. 
\end{lem}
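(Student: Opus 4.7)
The strategy is to exploit the fact that $K:=\bigcup_{i=0}^{n-1} f^i(J)$ is closed and forward-invariant with finite topological boundary in $M$. Indeed, $f^n(J)=J$ together with pairwise disjointness of the $f^i(J)$ gives $f(K)=K$; and because each $f^i(J)$ is a closed sub-interval (or closed arc) of $M$, the boundary $\partial K$ contains at most $2n$ points. First I would dispatch the easy case: if $f^m(x)\in K$ for some $m\geq 0$, then forward invariance of $K$ gives $f^k(x)\in K$ for all $k\geq m$, so $\omega(x,f)\subseteq K$, which is the first alternative.

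Next, assume the forward orbit of $x$ stays entirely in $M\setminus K$. If $\omega(x,f)\cap K=\emptyset$ we land in the second alternative, so I may assume there exists $p\in\omega(x,f)\cap K$. Because $p$ is approached by orbit points lying in $M\setminus K$, we have $p\in\overline{M\setminus K}=M\setminus K^{\circ}$; combined with $p\in K$, this forces $p\in\partial K$. For every $i\geq 0$ the iterate $f^i(p)$ is again in $\omega(x,f)$ by $f$-invariance of the $\omega$-limit, is again approached by orbit points in $M\setminus K$, and is again in $K$ because $f(K)\subseteq K$; the same argument gives $f^i(p)\in\partial K$. So the entire forward orbit of $p$ lies in the finite set $\partial K$ and is eventually periodic, producing a periodic point inside $\omega(x,f)$. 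Lemma~\ref{lem:omega-x}, which uses $\htop(f)=0$, then forces $\omega(x,f)$ to be a single periodic orbit; since it contains $p$, it equals the orbit of $p$, which lies in $\partial K\subseteq K$. Hence $\omega(x,f)\subseteq K$, and the dichotomy is established.

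The main obstacle is this second case. A priori, an orbit in $M\setminus K$ can accumulate on boundary points of $K$ from outside, so one cannot simply argue ``the orbit avoids $K$, therefore so does its $\omega$-limit.'' The crucial mechanism is that joint $f$-invariance of $\omega(x,f)$ and of $K$, together with finiteness of $\partial K$, traps the entire forward orbit of any problematic $\omega$-limit point inside the finite set $\partial K$, after which the zero-entropy structure theorem (Lemma~\ref{lem:omega-x}) collapses $\omega(x,f)$ to a periodic orbit sitting inside $K$.
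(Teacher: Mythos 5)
Your proof is correct. Note that the paper does not actually supply an argument here: it introduces the lemma with the remark that it ``can be easily confirmed by the definition,'' so there is nothing to compare step by step. Your write-up is therefore a genuine addition, and it is sound: the set $K=\bigcup_{i=0}^{n-1}f^i(J)$ satisfies $f(K)=K$ and is closed, which disposes of the case where the orbit of $x$ ever enters $K$; in the remaining case any point of $\omega(x,f)\cap K$ is forced into the finite set $\partial K$, its whole forward orbit stays there by the invariance of both $\omega(x,f)$ and $K$, and Lemma~\ref{lem:omega-x} then collapses $\omega(x,f)$ to a periodic orbit inside $\partial K\subset K$.

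One point worth emphasizing, since it runs counter to the paper's ``easily by the definition'' framing: your argument makes essential use of the hypothesis $\htop(f)=0$ through Lemma~\ref{lem:omega-x}, and this is unavoidable. Without it the dichotomy fails: take $J=[0,1/2]$ with $f(J)=J$ and $f$ transitive on $[1/2,1]$ with $f(1/2)=1/2$; then for suitable $x$ one gets $\omega(x,f)=[1/2,1]$, which meets $K=J$ only in the boundary point $1/2$ yet is not contained in $K$. So the boundary case you isolate as ``the main obstacle'' is exactly where the content of the lemma lies, and your mechanism for handling it (finiteness of $\partial K$ plus the zero-entropy structure of $\omega$-limit sets) is the right one.
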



We will need the following structure of $\omega$-limit sets of interval maps and circle maps.

\begin{lem}[\cite{Blokh90a}] \label{rem: period portion}
	Let $f\in C(M,M)$ with $\htop(f)=0$ and $\per(f)\neq\emptyset$. For any $x\in M$, if $\omega(x,f)$ is infinite, then there is a period portion of the $\omega$-limit set $\omega(x,f)$, i.e. 
	there exists a sequence of periodic intervals $ (J_n)_{n\in\bbn}$ with periods $(k_n)_{n\in\bbn}$ such that$\colon$
	\begin{enumerate}
		\item $k_1\geq 1$,  and $k_{n+1}$ is the multiple of $k_n$ for all $n\geq 1$; 
		\item $J_{n+1}\subset J_{n}$, and every connected components of $\bigcup_{i=0}^{k_n-1} J_n$ contains the same numbers of the connected components of $\bigcup_{i=0}^{k_{n+1}-1} J_{n+1}$;
		\item  $\omega(x,f)\subset \bigcap_{n\geq 1} \bigcup_{i=0}^{k_n-1} f^i(J_n)$ and  $\omega(x,f)$ does not contain periodic points.
	\end{enumerate}
\end{lem}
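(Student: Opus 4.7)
The plan is to construct the nested sequence $(J_n,k_n)_{n\in\bbn}$ by induction on $n$, at each step invoking the Sharkovsky-type structure of periodic points (Theorem~\ref{thm:interval-entropy-0} for intervals and Theorem~\ref{thm:deg-1-period} for the circle) together with Lemma~\ref{lem:omega-x} and Lemma~\ref{lem:omega-limit-periodic-interval} to extract, from the infiniteness of $\omega(x,f)$, a strictly finer periodic sub-interval whose iterates still capture the $\omega$-limit set.

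For the base case $n=1$, I would reduce to an interval-type situation. If $M=I$, simply take $J_1=M$ and $k_1=1$. If $M=\bbs$, the assumptions $\htop(f)=0$ and $\per(f)\neq\emptyset$ force $|\degg(f)|\le 1$ (since $|\degg(f)|\ge 2$ already yields positive entropy), so Theorem~\ref{thm:deg-1-period} supplies $k\in\bbn$ and $n_0\preceq 2^\infty$ with $P(f)=kS(n_0)$. Picking a periodic orbit of period $k$ and letting $J_1$ be one of the closed arcs cut off by this orbit yields a cycle of intervals of period $k_1=k$ under $f$; since $\omega(x,f)\cap\per(f)=\emptyset$ by Lemma~\ref{lem:omega-x}, the inclusion $\omega(x,f)\subset\bigcup_{i=0}^{k_1-1}f^i(J_1)$ follows automatically.

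For the inductive step, suppose $J_n$ with period $k_n$ has been built, with $\omega(x,f)\subset\bigcup_{i=0}^{k_n-1}f^i(J_n)$. Pick $y\in\omega(x,f)\cap J_n$ and consider $g_n:=f^{k_n}|_{J_n}$, a zero-entropy interval map by Lemma~\ref{lem: conjugate and entropy}. The set $\omega(y,g_n)\subset J_n$ is infinite, for otherwise Lemma~\ref{lem:omega-limit-periodic-interval} combined with the disjointness of the $k_n$ iterates of $J_n$ would force $\omega(x,f)$ finite; moreover it is disjoint from $\per(g_n)$ by Lemma~\ref{lem:omega-x}. Apply Theorem~\ref{thm:interval-entropy-0} to $g_n$: its period set is $S(m)$ for some $m\preceq 2^\infty$, and the infiniteness of $\omega(y,g_n)$ forces $m\succ 1$. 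Hence $g_n$ carries a period-$2$ orbit $\{a<b\}\subset J_n$, from which one extracts a proper closed sub-interval $J_{n+1}\subsetneq J_n$ that is $g_n$-periodic of some period $m_{n+1}=2^r\ge 2$ and satisfies $\omega(y,g_n)\subset\bigcup_{j=0}^{m_{n+1}-1}g_n^j(J_{n+1})$ by reapplying Lemma~\ref{lem:omega-limit-periodic-interval}. Setting $k_{n+1}=m_{n+1}k_n$ makes $J_{n+1}$ an $f$-periodic interval of period $k_{n+1}$, and $f$-equivariance propagates the inclusion $\omega(x,f)\subset\bigcup_{i=0}^{k_{n+1}-1}f^i(J_{n+1})$ to the next stage.

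Properties (1) and (3) are then immediate from the construction and Lemma~\ref{lem:omega-x}. Property (2), the uniform branching condition, holds because $f$ cyclically permutes the $k_n$ iterates of $J_n$ and the $k_{n+1}$ iterates of $J_{n+1}$ equivariantly, so each component $f^i(J_n)$ contains precisely the $m_{n+1}$ intervals $f^{i+jk_n}(J_{n+1})$ for $0\le j<m_{n+1}$. The hardest part I anticipate is the extraction of $J_{n+1}$ from the period-$2$ orbit of $g_n$: one must argue that between $a$ and $b$ (or within one of the two $g_n^2$-invariant sides) sits a proper closed sub-interval which is $g_n^2$-invariant, is not just a fixed interval carrying only a fixed point, and which absorbs infinitely many points of $\omega(y,g_n)$. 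This is the technical core of Blokh's theorem, and it depends on the fine fact that between two points of a period-$2$ orbit of a zero-entropy interval map one finds a fixed point of $g_n^2$ suitable as an endpoint for $J_{n+1}$, after which the dichotomy of Lemma~\ref{lem:omega-limit-periodic-interval} guarantees that the infinite $\omega$-set concentrates in one of the two resulting periodic chains.
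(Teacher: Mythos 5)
The paper does not actually prove this lemma; it is imported from Blokh \cite{Blokh90a} (see also the remarks in \cite{R17}) and stated without proof, so there is no in-paper argument to measure yours against. On its own terms your sketch has the right overall shape --- a nested period-doubling induction reducing to interval maps --- but it is incomplete at two concrete points, one of which is fatal as written.

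First, the base case on the circle fails. If $P$ is a periodic orbit of period $k$, a closed arc cut off by $P$ need not be a periodic interval: $f$ maps such an arc onto a connected set joining the images of its endpoints, and nothing prevents that image from covering several complementary arcs or the iterates from overlapping. Even granting periodicity, $\bigcup_{i=0}^{k-1}f^i(J_1)$ is the orbit of \emph{one} arc, not the union of all $k$ arcs, so the inclusion $\omega(x,f)\subset\bigcup_{i=0}^{k-1}f^i(J_1)$ does not ``follow automatically'' from $\omega(x,f)\cap\per(f)=\emptyset$. Worse, in the generic case $k=1$ (forced whenever $\deg(f)=0$ or $\deg(f)=-1$ by Theorem~\ref{thm:deg-1-period}, and common for $\deg(f)=1$) your $J_1$ is the whole circle, so $g_1=f^{k_1}|_{J_1}=f$ is still a circle map, Theorem~\ref{thm:interval-entropy-0} does not apply to it, and the induction never enters the interval setting. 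The reduction that actually works is the one the paper uses elsewhere: pass to the lifting $F$ and the $F$-invariant interval $I$ of Lemma~\ref{lem:I-extension} (for a suitable power of $f$), prove the statement for the interval map $F|_I$, and project the cycles of intervals back by $e$ as in Lemma~\ref{lem:non-sep-e}. Second, the inductive step is precisely where the substance of Blokh's theorem lives, and you defer it: the existence of a period-$2$ point of $g_n$ does not by itself yield a proper $g_n$-periodic subinterval whose cycle absorbs $\omega(y,g_n)$. One needs the doubling lemma that for a zero-entropy interval map with infinite $\omega$-limit set there is a fixed point $c$ splitting $\omega(y,g_n)$ into two pieces exchanged by $g_n$, whose convex hulls form a $2$-cycle of intervals; you explicitly flag this as ``the hardest part'' without proving it. As it stands the argument is a correct roadmap with its technical core missing, so either carry out that doubling lemma or cite \cite{Blokh90a} as the paper does.
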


\begin{lem}\label{lem:periodic-splitting-intersect}
Let $f\in C(M,M)$ with $\htop(f)=0$ and $\per(f)\neq\emptyset$.
Assume that $x\in M$ and $\omega(x,f)$ is infinite.
Let $(J_n)_{n\in\bbn}$ and $(k_n)_{n\in\bbn}$ 
provided by Lemma~\ref{rem: period portion}.
Assume that $K$ is a periodic  subinterval of $M$ with period  $m$
and $\omega(x,f)\cap \bigcup_{i=0}^{m-1}f^i(K)\neq\emptyset$.
Then there exists some $n\in\bbn$ such that 
$\bigcup_{i=0}^{k_n-1}f^i(J_n)\subset \bigcup_{i=0}^{m-1}f^i(K)$.
\end{lem}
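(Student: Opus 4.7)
The plan is to argue by contradiction, exploiting nested components of $A_n$ around a chosen base point in $\omega(x,f)$. Set $B:=\bigcup_{i=0}^{m-1}f^i(K)$. By Lemma~\ref{lem:omega-limit-periodic-interval} applied to $K$, the hypothesis $\omega(x,f)\cap B\neq\emptyset$ upgrades to $\omega(x,f)\subset B$. Since $\partial B$ is finite (at most $2m$ points) while $\omega(x,f)$ is infinite, one may pick $z\in\omega(x,f)\setminus\partial B\subset B^\circ$; let $K_0$ denote the component of $B$ containing $z$, so $z\in K_0^\circ$. By cyclic $f$-invariance, every component of $A_n:=\bigcup_{i=0}^{k_n-1}f^i(J_n)$ meets $\omega(x,f)$, and using connectedness of components of $A_n$ together with disjointness of components of $B$, one verifies that $A_n\subset B$ is equivalent to the component of $A_n$ containing $z$ being contained in $K_0$.

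For each $n$, let $L_n$ be the component of $A_n$ containing $z$; then $L_{n+1}\subset L_n$. Suppose for contradiction that $L_n\not\subset K_0$ for every $n$. Then $L_n$ is connected, meets $K_0^\circ$ (at $z$) and meets the open set $M\setminus K_0$, so $L_n\cap\partial K_0\neq\emptyset$; choose $p_n\in L_n\cap\partial K_0$. Since $\partial K_0$ has only two elements, pigeonhole yields a single point $p\in\partial K_0$ with $p=p_n$ for infinitely many $n$, and the nesting of $(L_n)$ upgrades this to $p\in L_n$ for every $n$. Invoking the structural fact from the Sharkovsky--Blokh theory of zero-entropy interval and circle maps that endpoints of a periodic subinterval are themselves periodic points, $p$ is periodic with $f$-period $\pi$ dividing $2m$. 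The identity $f^\pi(p)=p\in f^\pi(L_n)$ gives $p\in L_n\cap f^\pi(L_n)$; since $f^\pi(L_n)$ is one of the pairwise disjoint components of $A_n$, this forces $L_n=f^\pi(L_n)$, so $k_n\mid\pi\leq 2m$ for every $n$.

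This contradicts $k_n\to\infty$, itself a standard consequence of $\omega(x,f)$ being infinite with $\htop(f)=0$: were $k_n$ to stabilize at some $k^*$, the nested intersection $\bigcap_n J_n$ would be a nondegenerate $f^{k^*}$-invariant interval on which applying Lemma~\ref{rem: period portion} to the restricted zero-entropy system would furnish a periodic subinterval of strictly larger $f$-period, contradicting the stabilization. The two structural ingredients I expect to require the most care are (i) the periodicity of endpoints of periodic subintervals of zero-entropy maps, and (ii) the divergence $k_n\to\infty$ for infinite $\omega$-limit sets; both are standard outputs of the Sharkovsky--Blokh theory. Granting them closes the contradiction, forcing $L_n\subset K_0$ for some $n$, and hence $A_n\subset B$.
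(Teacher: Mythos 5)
Your reduction to showing that some component $L_n$ of $A_n=\bigcup_{i=0}^{k_n-1}f^i(J_n)$ lies inside a single component $K_0=f^{i_0}(K)$ of $B$ is sound, as is the pigeonhole producing a point $p\in\partial K_0$ with $p\in L_n$ for all $n$. The gap is precisely the step you flagged: the assertion that endpoints of a periodic subinterval of a zero-entropy map are themselves periodic points is not a standard fact, and it is false. Take $f\colon[0,1]\to[0,1]$ with $f(x)=2x$ on $[0,\tfrac12]$ and $f(x)=\tfrac54-\tfrac{x}{2}$ on $[\tfrac12,1]$. Then $f([0,1])=[0,1]$, so $[0,1]$ is a periodic interval of period $1$; every orbit converges to one of the fixed points $0$ or $\tfrac56$, so $\htop(f)=0$; yet the endpoint $1$ has orbit $1\mapsto\tfrac34\mapsto\tfrac78\mapsto\cdots\to\tfrac56$ and is not periodic. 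The underlying difficulty is that $f^m$ maps $f^{i_0}(K)$ onto itself but need not carry endpoints to endpoints. Without periodicity of $p$ you have no exponent $\pi$ with $f^{\pi}(p)=p$, hence no identity $L_n=f^{\pi}(L_n)$ and no bound on $k_n$, and the contradiction never materializes. Rescuing the claim under the extra hypotheses in force (that $p\in\bigcap_n L_n$ and $\omega(x,f)$ is infinite) would itself require a substantial argument, so as written the proof is incomplete.

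The paper closes the argument by a different device that never touches $\partial K_0$: since the periods $k_n$ tend to infinity, the nested intersection $\bigcap_n A_n$ has uncountably many connected components, each meeting $\omega(x,f)\subset B$, so some $f^{i_0}(K)$ contains at least three of them; choosing a middle component $C$, for $n$ large the component of $A_n$ containing $C$ is disjoint from the two flanking components and is therefore squeezed between them, hence contained in $f^{i_0}(K)$. Substituting that squeezing argument for your endpoint-periodicity claim repairs the proof, since your final step (one component of $A_n$ inside one $f^{i_0}(K)$ propagates to $A_n\subset B$ by periodicity of $J_n$ and $K$) is correct. Note also that both arguments need $k_n\to\infty$, which Lemma~\ref{rem: period portion} does not literally assert; your proposed justification is circular, since it applies Lemma~\ref{rem: period portion} again to draw a conclusion about the very sequence that lemma already produced.
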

\begin{proof}
As $\{k_n\}$ is strictly increasing, $\bigcap_{n=1}^\infty \bigcup_{i=0}^{k_n-1}f^i(J_n)$ has uncountably many connected components 
and at most countably many
of them can be non-degenerate.
Moreover, each connected component  intersects $\omega(x,f)$.
By Lemma~\ref{lem:omega-limit-periodic-interval}, 
$\omega(x,f)\subset \bigcup_{i=0}^{m-1}f^i(K)$.
So there exists $i_0\in\{0,1,\dotsc,m-1\}$ such that $f^{i_0}(K)$ contains at least three
non-degenerate  connected components of $\bigcap_{n=1}^\infty \bigcup_{i=0}^{k_n-1}f^i(J_n)$.
Then there exist $n\in\bbn$ and $j_0\in\{0,1,\dotsc,k_n-1\}$ 
such that $f^{j_0}(J_n)\subset  f^{i_0}(K)$.
As $J_n$ and $K$ are periodic, we have
$\bigcup_{i=0}^{k_n-1}f^i(J_n)\subset \bigcup_{i=0}^{m-1}f^i(K)$. 
\end{proof}

\subsection{Some auxiliary lemmas of circle maps}

Our purpose is to study the properties of circle maps with zero topological entropy. At first of this subsection, we use the following lemmas to illustrate that
we can restrict our study object to the circle maps with $|\degg(f)|\leq 1$, more 
precisely, non-extensible circle maps with $|\degg(f)|\leq 1$.

First we define extensible and non-extensible circle maps. This concept was first proposed by Mai in \cite{Mai J 1997}. Here we modify the original definition in \cite{Mai J 1997} according to what we need and describe as follow.
\begin{defn} 
	Let $f\in \css $ and $F$ be a lifting of $f$.
	We call	$f$ \emph{extensible} if there exists $r\in\bbr$ and $n\in\bbn$ such that  $|F^n([r,r+1])|\ge \left| \degg(f)\right| +1$. 
	Otherwise it is called \emph{non-extensible}.
\end{defn}

In order to investigate the relationship between circle maps with zero entropy and the
non-extensible circle maps with $|\degg(f)|\leq 1$, we introduce horseshoe and the relationship between horseshoe and positive entropy. 
  
\begin{defn}\label{defn:cicle-horseshoe}
	Let $f\in C(M,M)$. We say that two closed non-degenerate subintervals $K_{1}$ and $K_{2}$ of $M$ with disjoint interiors form a \emph{horseshoe} if there exist some $n\in \mathbb{N}$ and subintervals $K_{i1}$ and $K_{i2}$ of $K_{i}$ such that $f^n(K_{i1})=K_{1}$ and $f^n(K_{i2})=K_{2}$ for $i=1,2$. When $M$ denote a non-degenerate closed interval $n=1$.
\end{defn}

\begin{lem}[\cite{LM93}]
	\label{lem:horseshoe-positive-entorpy}
Let $f\in C(M,M)$. If $f$ has a horseshoe then $\htop(f)>0$. 
\end{lem}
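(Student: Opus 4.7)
The plan is to exhibit a full one-sided shift on two symbols inside the dynamics of $g=f^n$, which forces $\htop(g)\ge \log 2$ and hence $\htop(f)\ge \frac{\log 2}{n}>0$. This is the classical argument that a horseshoe produces exponentially many distinguishable orbits.

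First I would record the combinatorial consequence of the horseshoe: writing $K_{ij}\subset K_i$ for the subintervals with $g(K_{ij})=K_j$ provided by Definition~\ref{defn:cicle-horseshoe}, I claim that for every finite word $w=(w_0,w_1,\dots,w_{m-1})\in\{1,2\}^m$ there is a non-degenerate closed subinterval $I_w\subset K_{w_0}$ with $g^j(I_w)\subset K_{w_j}$ for $j=0,1,\dots,m-1$. I would prove this by backward induction on $m$. The base case $m=1$ is $I_{(w_0)}=K_{w_0}$. For the inductive step, given an interval $J\subset K_{w_1}$ with $g^j(J)\subset K_{w_{j+1}}$, the horseshoe condition gives $g(K_{w_0 w_1})=K_{w_1}\supset J$, so by the intermediate value theorem there is a closed subinterval $I_w\subset K_{w_0 w_1}\subset K_{w_0}$ with $g(I_w)=J$; hence $g^{j+1}(I_w)\subset K_{w_{j+1}}$ as required.

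Next I would convert this into an entropy lower bound. Pick interior points $p_i\in\mathrm{int}(K_i)$ (which exist since $K_1,K_2$ are non-degenerate) and let $\varepsilon>0$ be less than the distance between $p_1$ and $p_2$, which is positive because $K_1$ and $K_2$ have disjoint interiors. In the backward induction above I can shrink each pulled-back interval slightly so that its image under $g^j$ contains a small neighborhood of $p_{w_j}$ in $\mathrm{int}(K_{w_j})$; this is possible because $g$ maps the relevant subintervals onto $K_{w_j}$, which contains $p_{w_j}$ in its interior, so continuity produces the required room. Selecting any $x_w\in I_w$ adjusted this way, the family $\{x_w:w\in\{1,2\}^m\}$ is $(m,\varepsilon)$-separated for $g$, since two distinct words must disagree in some coordinate $j$ and the corresponding iterates $g^j(x_w),g^j(x_{w'})$ lie near $p_1$ and $p_2$ respectively. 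Thus $s_m(g,\varepsilon)\ge 2^m$, giving $\htop(g)\ge \log 2$ and therefore $\htop(f)=\frac{1}{n}\htop(f^n)\ge \frac{\log 2}{n}>0$.

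The main obstacle is the possibility that $K_1$ and $K_2$ share boundary points, which would make a naive symbolic coding $x\mapsto (w_j)_{j\ge 0}$ defined by ``$f^{nj}(x)\in K_{w_j}$'' ambiguous and potentially discontinuous. My proposal avoids this entirely by working with separated sets rather than with open covers or with the coding map itself: I only need points whose orbit segments are forced into the interiors of the prescribed $K_{w_j}$ at each stage, and that is exactly what the shrinking step in the backward construction guarantees. The argument is manifestly the same on the interval and on the circle, since the horseshoe definition already builds in the iterate $n$ needed on $\bbs$ and the intermediate value theorem applies on each non-degenerate subinterval regardless of whether $M=I$ or $M=\bbs$.
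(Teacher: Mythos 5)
The paper does not actually prove this lemma; it is quoted from Llibre and Misiurewicz \cite{LM93}, so your argument can only be judged on its own terms. The pullback construction of the intervals $I_w$ with $g^j(I_w)\subset K_{w_j}$ is correct and standard, and you are right to flag the shared-boundary problem as the crux. But the remedy you propose --- shrinking the pulled-back intervals so that every $x_w\in I_w$ has $g^j(x_w)$ near a pre-chosen interior point $p_{w_j}$ for all $j$ --- does not work. The constraint $g^{j+1}(I_w)\subset K_{w_{j+1}}$ already forces $g^j(I_w)\subset K_{w_j}\cap g^{-1}(K_{w_{j+1}})$, and this set can be a tiny interval clinging to the common boundary point of $K_1$ and $K_2$, nowhere near $p_{w_j}$; shrinking $I_w$ only makes its images smaller, it never relocates them. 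Concretely, take $M=[0,1]$, $K_1=[0,1/2]$, $K_2=[1/2,1]$, let $f$ vanish on $[0,3/8]\cup[5/8,1]$ and be piecewise linear on $[3/8,5/8]$ with values $0,\,1/2,\,1,\,1/2,\,0$ at the points $3/8,\,7/16,\,1/2,\,9/16,\,5/8$. This is a horseshoe with $n=1$, but $f^{-1}(K_2)=[7/16,9/16]$, so $I_{(1,2)}\subset[7/16,1/2]$ and $I_{(2,2)}\subset[1/2,9/16]$: \emph{any} choice of points in these two intervals are within $1/8$ of each other at time $0$ and both lie in $K_2$ at time $1$, so they are not $(2,\varepsilon)$-separated for any $\varepsilon>1/8$ --- in particular not for an $\varepsilon$ chosen just below $d(p_1,p_2)$ with, say, $p_1=1/4$ and $p_2=3/4$. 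Since the threshold $1/8$ here depends on the map (it can be made arbitrarily small), no choice of $\varepsilon$ determined only by $K_1$ and $K_2$ can rescue the count $s_m(g,\varepsilon)\ge 2^m$.

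The lemma is of course true --- in the example the exponential growth of separated sets happens at scales below $1/16$ --- but handling the shared boundary requires an additional idea. The standard routes are: (i) first upgrade the horseshoe to one whose two intervals are genuinely \emph{disjoint}, possibly for a higher iterate of $f$; then the itinerary map on $\bigcap_{j\ge 0}g^{-j}(K_1\cup K_2)$ is a well-defined continuous semi-conjugacy onto the full one-sided $2$-shift, and $\htop(g)\ge\log 2$ follows from monotonicity of entropy under factors (this is essentially how the matter is treated in \cite{R17}); or (ii) a counting argument with open covers that absorbs the at most two points of $K_1\cap K_2$ instead of trying to separate individual orbits at a fixed scale tied to interior reference points. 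As written, your separation step asserts a statement that is false for some horseshoes, so the proof has a genuine gap.
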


The following lemma tells us that a circle map with zero topological entropy is a non-extensible circle map  
with $|\degg(f)|<1$. It is an analogous result with Theorem 1.1 in \cite{Mai J 1997} but still have some differences since the definitions of non-extensible are not the same.   Here we give our proof.

\begin{lem}\label{lem:zero-entropy-non-ext}
Let $f\in \css$ with $\htop(f)=0$. Then $f$ is a non-extensible circle map with $|\degg(f)|\leq 1$.
\end{lem}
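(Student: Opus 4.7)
The plan is to prove the contrapositive: if $|\degg(f)| \geq 2$ or $f$ is extensible, then some iterate of $f$ admits a horseshoe, so by Lemma~\ref{lem:horseshoe-positive-entorpy} together with the identity $\htop(f^n) = n\htop(f)$ we obtain $\htop(f) > 0$, contradicting the hypothesis.

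The technical core is a geometric claim: if $G\colon [r, r+1] \to \bbr$ is continuous with image $[m, M]$ of length at least $2$, then there exist disjoint closed subintervals $J_1, J_2 \subset [r, r+1]$ with $G(J_1) = [m, m+1]$ and $G(J_2) = [M-1, M]$. To prove it, I would pick $y_m, y_M \in [r, r+1]$ attaining $m$ and $M$, say $y_m < y_M$, and define $b = \inf\{y \in [y_m, y_M] : G(y) = m+1\}$ and $c = \sup\{y \in [y_m, y_M] : G(y) = M-1\}$. Setting $J_1 = [y_m, b]$ and $J_2 = [c, y_M]$, the extremality of $m, M$ combined with the intermediate value theorem gives $G(J_1) = [m, m+1]$ and $G(J_2) = [M-1, M]$, and $b \leq c$ follows from $m + 1 \leq M - 1$ (otherwise $G$ could not revisit values needed to reach $M$ at $y_M$). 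Projecting via $e$, the arcs $e(J_i) \subset \bbs$ have disjoint interiors (they lie within a single period $[r, r+1]$) and each maps onto $\bbs$ under the induced circle map, so they form a horseshoe.

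Using this, three cases arise. If $|\degg(f)| \geq 2$, then $|F([0, 1])| \geq |\degg(f)| \geq 2$ for any lifting $F$, and the claim with $G = F$ gives a horseshoe for $f$ directly. If $|\degg(f)| = 1$ and $f$ is extensible, then there exist $r, n$ with $|F^n([r, r+1])| \geq |\degg(f)| + 1 = 2$; since $F^n$ is a lifting of $f^n$, the claim with $G = F^n$ produces a horseshoe for $f^n$, hence $\htop(f^n) > 0$. In either case this contradicts $\htop(f) = 0$.

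The remaining case, $\degg(f) = 0$ with $f$ extensible, is the main obstacle. Here we only have $|F^n([r, r+1])| \geq |\degg(f)| + 1 = 1$, so the geometric claim does not apply. Instead I would exploit that $\degg(f^n) = 0$ forces $F^n$ to be $1$-periodic, hence $F^n(y_m + 1) = F^n(y_m) = m$. Taking $y_m < y_M$ in $[r, r+1]$ attaining the extrema of $F^n|_{[r, r+1]}$, the wrapped interval $[y_M, y_m + 1]$ projects to the arc of $\bbs$ complementary to $e([y_m, y_M])$, and the restriction $F^n|_{[y_M, y_m + 1]}$ goes from $M$ to $m$, with image containing $[m, M]$ of length $\geq 1$. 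Thus both complementary arcs $e([y_m, y_M])$ and $e([y_M, y_m + 1])$ map under $f^n$ onto $\bbs$, giving a horseshoe for $f^n$. The subtlety here is that a single lifted interval of length $\geq 1$ is not large enough to produce two disjoint horseshoe arcs inside it; exploiting the circle structure via the $1$-periodicity of $F^n$ is essential to close up on the complementary side of $\bbs$.
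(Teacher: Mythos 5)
Your proposal is correct and takes essentially the same route as the paper: both arguments reduce everything to producing a horseshoe (Lemma~\ref{lem:horseshoe-positive-entorpy}) via the lifting, split into the cases $|\degg(f)|\ge 2$, $|\degg(f)|=1$ extensible, and $\degg(f)=0$ extensible, and in the degree-zero case both exploit the $1$-periodicity of the lifting to close up with the two complementary arcs determined by the points realizing $\min F$ and $\max F$. Your write-up merely makes explicit the choice of the two subintervals with unit-length images (via the inf/sup construction) and, in the extensible cases, works with $F^n$ directly rather than with the iterate built into the paper's definition of horseshoe — differences of presentation, not of substance.
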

\begin{proof}
Let $F$ be a lifting of $f$. 
If $|\degg(f)|\geq 2$, then  $|F([0,1])|\geq |\degg(f)|\geq 2$.
Pick two closed subintervals $K_1$ and $K_2$ of $[0,1]$ 
with disjoint interiors such that $|F(K_1)|=|F(K_2)|=1$, which implies that $eF(K_1)=eF(K_2)=\bbs$ and as a result $e(K_1)$ and $e(K_2)$ form a horseshoe of $f$.
By Lemma~\ref{lem:horseshoe-positive-entorpy}, $\htop(f)>0$ which is a contradiction. Thus $|\degg(f)|\leq 1$.

Consider the case $|\degg(f)|=1$
and assume that $f$ is extensible.
Then there exist $r\in\bbr$ and $n\in\bbn$ such that  $|F^n([r,r+1])|\geq 2$. Analogous to the proof in the situation $|\degg(f)|\geq 2$,  we can induce that $f$ has a horseshoe which is a contradiction. Thus $f$ is non-extensible. 

Now we consider the case $\degg(f)=0$ and assume that $f$ is extensible.
Note that in this case $F$ is a periodic function with period $1$. Pick $x,y\in\bbr$ with $x<y<x+1$ such that $F(x)=\min F(\bbr)$
and $F(y)=\max F(\bbr)$.
As $f$ is extensible, $F(y)-F(x)\geq 1$.
Let $K_1=[x,y]$ and $K_2=[y,x+1]$.
It is easy to see that $e(K_1)$ and $e(K_2)$ form a horseshoe of $f$ which is a contradiction. Thus $f$ is non-extensible.
\end{proof}

For a circle map with zero topological entropy, we will be searching for a lifting $F$ and an interval $I$ such that $F|_I$ become an interval map. The idea was first proposed in \cite{Mai J 1997},
here we restate the result and give our proof according to what we need.

\begin{lem}[\cite{Mai J 1997}]\label{lem:I-extension}
Let $f\in\css$ with $\htop(f)=0$ and $\fix(f)\neq\emptyset$.
Then there exist a lifting $F$ of $f$ and
a closed subinterval $I=[a,b]$ of $\bbr$ with $1\leq b-a<2$ such that $F(I)\subset I$.
Moreover,
\begin{enumerate}
	\item if $\degg(f)=0$, then $b-a=1$ and $|F([a,b])|<1$;
	\item if $\degg(f)=1$, then $F([a,b])=[a,b]$,
	$F([a,b-1])\subset [a,b-1]$ and $F([a+1,b])\subset[a+1,b]$;
	\item if $\degg(f)=-1$, then 
	$F([a,b])=[a,b]$,
	$F([a,b-1])\subset [a+1,b]$ and $F([a+1,b])\subset[a,b-1]$.
\end{enumerate}
\end{lem}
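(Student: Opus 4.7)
The plan is to construct the lifting $F$ and the interval $I$ separately in each of the three allowed degree cases. Pick $z\in \fix(f)$ and a preimage $\tilde z\in e^{-1}(z)$; since $e(F(\tilde z))=f(z)=z$, we have $F(\tilde z)\in \tilde z+\bbz$, so by translating the chosen lift by an integer I may assume $F(\tilde z)=\tilde z$. Lemma~\ref{lem:zero-entropy-non-ext} then gives $|\degg(f)|\leq 1$ together with the non-extensibility bound $|F^n([r,r+1])|<|\degg(f)|+1$ for every $r\in\bbr$ and $n\in\bbn$. In Case (1), $\degg(f)=0$, the lift $F$ is periodic of period $1$, so $F(\bbr)=F([\tilde z,\tilde z+1])$ is a compact interval of length strictly less than $1$; setting $a=\min F(\bbr)$ and $I=[a,a+1]$ yields $F(I)\subseteq F(\bbr)\subseteq I$ and $|F(I)|<1$.

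For Case (2), $\degg(f)=1$, the identity $F(x+1)=F(x)+1$ combined with $F(\tilde z)=\tilde z$ forces every $\tilde z+k$ ($k\in\bbz$) to be fixed by every iterate $F^n$. The intermediate value theorem then gives $F^n([\tilde z,\tilde z+1])\supseteq [\tilde z,\tilde z+1]$, while non-extensibility gives $F^n([\tilde z,\tilde z+1])\subseteq (\tilde z-1,\tilde z+2)$ for all $n$. If $F([\tilde z,\tilde z+1])\subseteq [\tilde z,\tilde z+1]$, I take $I=[\tilde z,\tilde z+1]$: then $b-a=1$ and the singleton sub-intervals $\{\tilde z\}$ and $\{\tilde z+1\}$ are trivially $F$-invariant. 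Otherwise I would use the zero-entropy hypothesis to extract a proper $F$-invariant sub-interval $J\subsetneq [\tilde z,\tilde z+1)$ with $\tilde z\in J$ and $|J|<1$, supplied by the periodic-interval decomposition of Lemma~\ref{rem: period portion} applied to the orbit of a non-fixed point near $\tilde z$. Setting $a=\min J$ and $b=\max J+1$, the translate $J+1=[a+1,b]$ is automatically $F$-invariant by the shift relation, and one then verifies $F([a,b])=[a,b]$ by analyzing the image of the gap $[b-1,a+1]$.

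For Case (3), $\degg(f)=-1$, the iterate $f^2$ has $\degg(f^2)=1$, $\htop(f^2)=0$ and $z\in \fix(f^2)$, so applying Case (2) to $f^2$ with lift $F^2$ produces a closed $F^2$-invariant interval $J'$ with $1\leq |J'|<2$. Using $F(x+1)=F(x)-1$ to track how $F$ reverses orientation, I would then symmetrize $J'$ around $\tilde z$ to obtain an $F$-invariant interval $I=[a,b]$ whose endpoints satisfy $F(a)\in [a+1,b]$ and $F(b)\in [a,b-1]$, so that $F$ swaps the two sub-intervals $[a,b-1]$ and $[a+1,b]$ rather than preserving each. The main obstacle is Case (2) when $F([\tilde z,\tilde z+1])$ properly contains $[\tilde z,\tilde z+1]$: producing the correct sub-interval $J$ and certifying $b-a<2$. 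A naive orbit-closure argument for $[\tilde z,\tilde z+1]$ only yields $b-a<3$, and the periodic-interval nested structure from zero entropy (together with the no-horseshoe obstruction of Lemma~\ref{lem:horseshoe-positive-entorpy}) is genuinely needed to confine the construction to a region of length strictly less than $2$; Case (3) then reduces to Case (2) via $F^2$ with only a bookkeeping step for orientation reversal.
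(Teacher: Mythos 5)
Your Case (1) matches the paper's argument. The genuine gap is in Case (2), precisely in the branch you flag as ``the main obstacle'', namely when $F([\tilde z,\tilde z+1])\supsetneq[\tilde z,\tilde z+1]$. You dismiss the orbit-closure argument as only yielding $b-a<3$ and propose instead to extract a proper $F$-invariant subinterval $J\subsetneq[\tilde z,\tilde z+1)$ containing $\tilde z$ from Lemma~\ref{rem: period portion}. Both halves of this are wrong. The orbit closure does work: with $J=[\tilde z,\tilde z+1]$, the intervals $F^n(J)$ form an \emph{increasing} sequence (each contains $J$ because the endpoints are fixed), and non-extensibility bounds each individual $F^n(J)$ by length $<2$; the closure $K=[a,b]$ of an increasing union of intervals of length $<2$ has length $\le 2$, not $<3$. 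This $K$ satisfies $F(K)=K$, and the sub-interval invariances follow from the shift relation (if $y\in[a,b-1]$ had $F(y)>b-1$, then $F(y+1)=F(y)+1>b$, contradicting $F([a,b])=[a,b]$). The only remaining issue is the boundary case $|K|=2$, where one checks $F(a)=a$, $F(a+1)=a+1$, $F(b)=b$ and shrinks to $I=[a,a+1]$. This is exactly the paper's proof, and it uses the zero-entropy hypothesis \emph{only} through Lemma~\ref{lem:zero-entropy-non-ext}; your closing claim that the nested periodic-interval structure is ``genuinely needed'' is incorrect.

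Your substitute construction, by contrast, is unsupported. Lemma~\ref{rem: period portion} produces nested periodic intervals, with periods tending to infinity, around an \emph{infinite} $\omega$-limit set; it does not supply an $F$-invariant interval containing a prescribed fixed point $\tilde z$, and there is no reason such a proper invariant $J\subsetneq[\tilde z,\tilde z+1)$ should exist at all (the degenerate choice $J=\{\tilde z\}$ satisfies your requirements but returns the non-invariant $I=[\tilde z,\tilde z+1]$). Moreover, even granting such a $J$, invariance of $J$ and of $J+1$ says nothing about the image of the gap between $\max J$ and $\min J+1$, so $F([a,b])\subset[a,b]$ does not follow. Case (3) inherits the gap since you reduce it to Case (2); note also that the paper deliberately passes to the shifted lift $F=F_1+1$, with $F(\tilde z)=\tilde z+1$, because with a lift fixing $\tilde z$ the conclusion $F([a,b-1])\subset[a+1,b]$ forces $\tilde z\notin[a,b-1]\cup[a+1,b]$ (these two intervals are disjoint when $b-a<2$), an issue your ``symmetrization'' sketch does not address.
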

\begin{proof}
By Lemma~\ref{lem:zero-entropy-non-ext}, 
$|\degg(f)|\leq 1
$ and $f$ is non-extensible.
We divide the proof into three cases according to the degree of $f$. 

\textbf{Case 1}: $\degg(f)=0$. 
Let $F$ be a lifting of $f$. 
As $F$ is a periodic function with period $1$, $F$ is bounded.
As $f$ is non-extensible, $\max F(\bbr)-\min F(\bbr)<1$.
Let $a=\min F(\bbr)$. Then $F([a,a+1])\subset [\min F(\bbr),
\max F(\bbr)]\subset [a,a+1]$.

\textbf{Case 2}: $\degg(f)=1$.
	Since $f$ has a fixed point, we can choose a lifting $F$ of $f$ such that $F$ has a fixed point $x$.
	Then $x+1$ is also a fixed point and $F([x,x+1])\supset [x,x+1]$. 
Put $J=[x,x+1]$. By induction, $F^{n+1}(J)\supset F^n(J) \supset J$ for all $n\in\bbn$.  

Let $K=	\overline{\bigcup_{n=1}^{\infty}F^{n}(J)}$.
It is clear that
$K\supset J$ and $F(K)=K$.
Since $f$ is non-extensible, for any $n\in\bbn$, $|F^n(J)|<2$, hence $1\leq|K|\leq 2$.
Denote $K=[a,b]$.
Since $F[a,b]=[a,b]$, 
$\min F|_{[a,b-1]}\geq \min F|_{[a,b]}\geq a$.
If there exists some $y\in [a,b-1]$ such that $F(y)>b-1$, then $y+1\in [a+1,b]\subset [a,b]$ and $F(y+1)=F(y)+1>b$, which contradicts to $F([a,b])=[a,b]$.
Therefore for any $y\in [a,b-1]$, $F(y)\leq b-1$. Thus we have  $F([a,b-1])\subset [a,b-1]$.
Similarly, we have  $F([a+1,b])\subset [a+1,b]$.

If $|K|<2$, we take $I=K$.
If $|K|=2$, that is $b-a=2$, then $b-1=a+1$. 
Since $F([a,b-1])\subset [a,b-1]$, $F([a+1,b])\subset [a+1,b]$, we have $F(b-1)=b-1$ and $F(a+1)=a+1$. Thus $F(a)=a$ and so $F([a,b-1])= [a,b-1]$. In this case, it is enough to take $I=[a,a+1]$. 
 
\textbf{Case 3}:  $\degg(f)=-1$.
Since $f$ has a fixed point, we can choose a lifting $F_1$ of $f$ such that $F_1$ has a fixed point $x$. 
Then $F_1(x+1)=F_1(x)-1=x-1$ and $F_{1}([x,x+1])\supset[x-1,x]$. 
Now we choose another lifting $F=F_{1}+1$ of $f$
and put $J=[x,x+1]$. Then we have $F(J)\supset J$.
By induction, $F^{n+1}(J)\supset F^n(J)\supset J$ for all $n\in\bbn$.

Let $K=	\overline{\bigcup_{n=1}^{\infty}F^{n}(J)}$. 
It is clear that we have 
$K\supset J$ and $F(K)=K$.
Since $f$ is non-extensible, $|F^n(J)|<2$ for all $n\in\bbn$ and $|K|\leq 2$. Denote $K=[a,b]$.

Since $F[a,b]=[a,b]$, $\max F|_{[a,b-1]}\leq b$. If there exists some $y\in [a,b-1]$
such that $F(y)<a+1$, then there exists $y+1\in [a+1,b]\subset [a,b]$ such that $F(y+1)<a$, which is a contradiction to $F([a,b])=[a,b]$. Therefore $F([a,b-1])\subset [a+1,b]$. 
Similarly, we have $F([a+1,b])\subset [a,b-1]$. 

If $|K|<2$, we take $I=K$.
If $|K|=2$, that is $b-a=2$, then $b-1=a+1$. 
Since $F([a,a+1])\subset [a+1,b]$ and 
$F([a+1,b])\subset [a,a+1]$, we have $F(a+1)=a+1$ and $F(a)=F(a+1)+1=b$. It follows that $F([a,a+1])= [a+1,b]$. Now we choose a new
lifting $F-1$ to replace $F$, then we have $(F-1)[a,b-1]= [a,b-1]$ which $|[a,b-1]|=1$. In this case, we take $I=[a,a+1]$. 
\end{proof}

\begin{lem}[\cite{M. Kuchta 1990}]\label{lem:F-Per-f}
	Let $f\in \css$ with $\fix(f)\neq\emptyset$. If $F$ and $I$ are the lifting and interval provided by Lemma~\ref{lem:I-extension}, then $e(\per(F))=\per(f)$.
\end{lem}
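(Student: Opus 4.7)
The inclusion $e(\per(F))\subset \per(f)$ is immediate from the semi-conjugacy $e\circ F=f\circ e$: if $F^n(x)=x$, then $f^n(e(x))=e(F^n(x))=e(x)$, so $e(x)\in\per(f)$. My focus will be on the reverse inclusion $\per(f)\subset e(\per(F))$.

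For this, I would start with $y\in\per(f)$ satisfying $f^n(y)=y$ and produce a preimage $x\in I$ with $x\in\per(F)$. Since $|I|\geq 1$, the restriction $e|_I$ is surjective onto $\bbs$, so some $x\in I$ with $e(x)=y$ exists. The invariance $F(I)\subset I$ gives $F^n(x)\in I$, and the semi-conjugacy then yields $e(F^n(x))=e(x)$, whence $F^n(x)-x\in\bbz$. The task thus reduces to showing that this integer is $0$ (or, in the degree-$-1$ case, that $F^{2n}(x)=x$).

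I would then split into the three cases of Lemma~\ref{lem:I-extension}. In the degree-$0$ case, I pick $x\in[a,a+1)$; iterating the containment $F(I)\subset[a,a+c]$ with $c<1$ shows $F^n(x)-x\in(-1,c]\cap\bbz=\{0\}$. In the degree-$1$ case, the two invariant subintervals $[a,b-1]$ and $[a+1,b]$ both have length $b-a-1<1$, so whenever $x$ lies in either of them, the same length bound forces $F^n(x)=x$; if instead $x$ lies in the middle gap $(b-1,a+1)$, then only the weaker bound $|F^n(x)-x|<b-a<2$ is available, reducing the integer to $\{-1,0,1\}$, and the values $\pm 1$ are ruled out because $F^n(x)=x+1$ would force $F^n(x)>b$ (using $x>b-1$) while $F^n(x)=x-1$ would force $F^n(x)<a$ (using $x<a+1$), both contradicting $F^n(x)\in[a,b]$. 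The degree-$-1$ case then reduces to the previous one by passing to $F^2$, which has degree $1$ and separately preserves $[a,b-1]$ and $[a+1,b]$; applying the degree-$1$ argument to $F^{2n}$ yields $F^{2n}(x)=x$.

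The main obstacle is the behavior on the gap $(b-1,a+1)$ in the degree-$\pm 1$ cases: there the local invariant subintervals of length strictly less than $1$ are of no direct help, and one must instead combine the global invariance $F(I)=I$ with the precise location of $x$ in the gap to exclude the integer shifts $\pm 1$. Everything else reduces to bookkeeping with the three cases of Lemma~\ref{lem:I-extension}.
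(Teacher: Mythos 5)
The paper does not actually prove this lemma: it is stated with an attribution to \cite{M. Kuchta 1990} and used as a black box, so there is no in-paper argument to compare against. Your proof is correct and self-contained, and it uses exactly the data that Lemma~\ref{lem:I-extension} makes available: the easy inclusion $e(\per(F))\subset\per(f)$ from $e\circ F=f\circ e$, and for the converse the observation that any preimage $x\in I$ of a periodic point $y$ satisfies $F^n(x)-x\in\bbz$, after which the invariant subintervals of length $<1$ (or the location of $x$ in the gap $(b-1,a+1)$ together with $F^n(x)\in[a,b]$) force the integer to vanish; the reduction of the degree $-1$ case to degree $1$ via $F^2$ is also sound, since $F^2([a,b-1])\subset F([a+1,b])\subset[a,b-1]$ and $f^{2n}(y)=y$. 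Two small points worth tightening. In the degree $0$ case you write $F(I)\subset[a,a+c]$ with $c<1$; the stated conclusion of Lemma~\ref{lem:I-extension} only gives $|F(I)|<1$ and $F(I)\subset[a,a+1]$, and the left endpoint being $a$ comes from the construction ($a=\min F(\bbr)$) rather than from the statement. You can avoid invoking the construction by noting that $F(I)$ is itself $F$-invariant (since $F(I)\subset I$) and choosing the preimage $x$ of $y$ inside $F(I)$, which is possible because a periodic $y$ lies in $f(\bbs)=e(F(I))$; then $F^n(x)-x$ is an integer of absolute value less than $|F(I)|<1$. Second, in the degree $1$ gap case your detour through $\{-1,0,1\}$ is unnecessary: $x\in(b-1,a+1)$ and $F^n(x)\in[a,b]$ already give $F^n(x)-x\in(-1,1)$ directly. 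Neither point affects the validity of the argument.
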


The following lemma describes the $\omega$-limit set of $F$, which is the lifting of $f$ with $\htop(f)=0$ and $\degg(f)=1$.

\begin{lem}\label{lem:degree1-w-limit set}
	Let $f\in\css$ with $\htop(f)=0$, $\degg(f)=1$ and $\fix(f)\neq\emptyset$.
	Let $F$ and $I=[a,b]$ provided by Lemma~\ref{lem:I-extension}.
	Then for any $x\in [a,b]$,  $\omega(x,F)\subset [a,b-1]$ or $\omega(x,F)\subset (b-1,a+1)$ or $\omega(x,F)\subset [a+1,b]$.
\end{lem}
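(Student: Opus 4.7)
The plan is to use the disjoint partition $[a,b] = [a,b-1] \sqcup (b-1, a+1) \sqcup [a+1, b]$, which is a genuine disjoint union since $1 \le b-a < 2$ forces $b-1 < a+1$, and to exploit the forward invariance of the two outer closed pieces provided by Lemma~\ref{lem:I-extension}.

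First I would split on whether the forward orbit $\{F^n(x)\}_{n \ge 0}$ ever meets one of the invariant intervals. If $F^n(x) \in [a,b-1]$ for some $n \ge 0$, then forward invariance of $[a,b-1]$ immediately gives $\omega(x,F) \subset [a,b-1]$, and the symmetric argument applies to $[a+1,b]$. The only remaining case is when the entire forward orbit lies in the open middle set $(b-1, a+1)$, which yields $\omega(x,F) \subset [b-1, a+1]$.

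The substantive step is to upgrade this last inclusion to $\omega(x,F) \subset (b-1, a+1)$, or else to show that $\omega(x,F)$ collapses into one of the two outer intervals. For this I would apply Lemma~\ref{lem:omega-x} to the interval map $F|_{[a,b]}$, whose topological entropy vanishes by Lemma~\ref{lift-entropy} and the remark following it (using $|I| \ge 1$). Thus $\omega(x,F)$ is either a periodic orbit of $F|_{[a,b]}$ or an infinite set disjoint from $\per(F|_{[a,b]})$. The key observation is that if $b-1 \in \omega(x,F)$, then by $F$-invariance of $\omega(x,F)$ we have $F(b-1) \in \omega(x,F) \subset [b-1, a+1]$; combined with $F(b-1) \in [a, b-1]$ (forward invariance again), the intersection $[a,b-1] \cap [b-1, a+1] = \{b-1\}$ forces $F(b-1) = b-1$, so $b-1$ would be a fixed point of $F$. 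A symmetric argument applies to $a+1$.

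To conclude, if $\omega(x,F)$ is infinite (and therefore contains no periodic points), the observation above immediately rules out both $b-1$ and $a+1$ from $\omega(x,F)$, giving $\omega(x,F) \subset (b-1, a+1)$. If instead $\omega(x,F)$ is a periodic orbit sitting in $[b-1, a+1]$, then either it avoids $\{b-1, a+1\}$ and lies in $(b-1, a+1)$, or it contains $b-1$ — in which case $\omega(x,F) = \{b-1\} \subset [a, b-1]$, placing us in the first alternative — or symmetrically it contains $a+1$. The main (and rather minor) obstacle I anticipate is cleanly treating the boundary points $b-1$ and $a+1$, which straddle the open middle interval and one of the forward-invariant closed intervals; invoking the zero-entropy structural lemma on $\omega$-limit sets is exactly the tool that converts this potentially awkward accumulation behavior into the clean trichotomy of the statement.
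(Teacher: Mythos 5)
Your proposal is correct and follows essentially the same route as the paper's proof: the trichotomy via forward invariance of $[a,b-1]$ and $[a+1,b]$, the reduction to $\omega(x,F)\subset[b-1,a+1]$, the observation that $b-1\in\omega(x,F)$ forces $F(b-1)=b-1$ since $[a,b-1]\cap[b-1,a+1]=\{b-1\}$, and the appeal to Lemma~\ref{lem:omega-x} to conclude $\omega(x,F)=\{b-1\}$ in that case. Your explicit justification that $\htop(F|_I)=0$ (via Lemma~\ref{lift-entropy}) before invoking Lemma~\ref{lem:omega-x} is a small but welcome addition of care.
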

\begin{proof}
Since $F([a,b-1])\subset [a,b-1]$ and $F([a+1,b])\subset[a+1,b]$,
if there exists some $n\in\bbn$ such that 
$F^n(x)\in[a,b-1]$ or $F^n(x)\in[a+1,b]$, then  $\omega(x,F)\subset [a,b-1]$ 
or $\omega(x,F)\subset [a+1,b]$. 
Now we assume that for any $n\in\bbn$, $F^n(x)\in(b-1,a+1)$. 
Then $\omega(x,F)\subset [b-1,a+1]$.
If $b-1\in \omega(x,F)$,
then $F(b-1)\in \omega(x,F)\subset [b-1,a+1]$.
Since $F(b-1)\in F([a,b-1])\subset[a,b-1]$, $F(b-1)=b-1$.
By Lemma~\ref{lem:omega-x}, $\omega(x,F)=\{b-1\}$ and then $\omega(x,F)\subset[a,b-1]$.
Similarly, if $a+1\in \omega(x,F)$ then $\omega(x,F)=\{a+1\}\subset[a+1,b]$.
If $b-1,a+1\not\in \omega(x,F)$, then $\omega(x,F)\subset (b-1,a+1)$.
\end{proof}

\begin{lem}\label{lem:F-periodic-subinterval}
	Let $f\in\css$ with $\htop(f)=0$, $\degg(f)=1$ and $\fix(f)\neq\emptyset$.
	Assume that $F$ and $I=[a,b]$ are the lifting and interval provided by~\ref{lem:I-extension}. 
	If $J$ is a periodic subinterval of $I$ with period $n>1$, then 
	exactly one of the following alternatives holds:	
	\begin{enumerate}
		\item $J\subset [a,b-1]$ , $\bigcup_{j=0}^{n-1}F^j(J)\subset [a,b-1]$; 
		\item $J\subset [a+1,b]$ , $\bigcup_{j=0}^{n-1}F^j(J)\subset [a+1,b]$; 
		\item $J\subset (b-1,a+1)$ , $\bigcup_{j=0}^{n-1}F^j(J)\subset (b-1,a+1)$.	
	\end{enumerate}
\end{lem}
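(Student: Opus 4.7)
The plan is to exploit the $F$-invariance of $L := [a, b-1]$ and $R := [a+1, b]$ guaranteed by Lemma~\ref{lem:I-extension}, together with the fact that $b-a < 2$, which implies that $L$, $M := (b-1, a+1)$, and $R$ are pairwise disjoint and partition $I = [a, b]$. Since $L$ and $R$ are $F$-invariant, the cases $J \subset L$ and $J \subset R$ immediately yield conclusions~(1) and~(2). Mutual exclusivity of the three alternatives is clear from disjointness. The remaining task is to show that $J$ cannot straddle any of these pieces, and that when $J \subset M$ the whole orbit of $J$ also lies in $M$.

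The key claim I would establish is: \emph{if some iterate $F^{i_0}(J)$ meets $L$, then $J \subset L$} (and hence every iterate lies in $L$). Replacing $J$ by $F^{i_0}(J)$, which is again a periodic subinterval of period $n$, one may assume $i_0 = 0$. From $F(L) \subset L$ and $J \cap L \neq \emptyset$, an easy induction shows $F^i(J) \cap L \neq \emptyset$ for every $i \geq 0$. For each such $i$, the subinterval $F^i(J) \subset [a,b]$ either lies inside $L$ or, by connectedness, extends past $b-1$ and therefore contains the point $b-1$ itself. Since the $n$ intervals $J, F(J), \dotsc, F^{n-1}(J)$ are pairwise disjoint, at most one of them can contain the single point $b-1$, so at least $n-1$ of the iterates are wholly contained in $L$. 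Pick any $j$ with $F^j(J) \subset L$; $F$-invariance of $L$ yields $F^{j+k}(J) \subset L$ for all $k \geq 0$, and by periodicity every iterate, including $J = F^0(J)$, lies in $L$.

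A symmetric argument using $F(R) \subset R$ and the point $a+1$ establishes the companion claim: if some iterate of $J$ meets $R$, then $J \subset R$. Combining the two, if $J \not\subset L$ and $J \not\subset R$, then no iterate of $J$ meets $L$ or $R$, so $F^i(J) \subset I \setminus (L \cup R) = M$ for every $i$, which is conclusion~(3). The main obstacle I expect is the key claim itself: weaving together $F$-invariance of $L$, connectedness of each $F^i(J)$, and pairwise disjointness of the cycle of intervals to force $J$ entirely into $L$. Once that is in place the rest of the argument is essentially bookkeeping.
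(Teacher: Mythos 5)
Your proposal is correct and follows essentially the same route as the paper's proof: both rest on the $F$-invariance of $[a,b-1]$ and $[a+1,b]$, the observation that a connected set meeting $[a,b-1]$ but not contained in it must contain the point $b-1$, and the pairwise disjointness of the cycle $J, F(J),\dotsc,F^{n-1}(J)$ to rule out straddling. The only cosmetic difference is the endgame: the paper derives a direct contradiction from $F^n(J)=J\not\subset[a,b-1]$ via $F^{n-1}(J)\subset[a,b-1)$, whereas you locate one iterate inside $[a,b-1]$ and propagate containment forward around the cycle — both are valid.
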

\begin{proof}
	Assume that $J\cap [a,b-1]\neq \emptyset$.
	If $J\cap (b-1,b]\neq \emptyset$, then
	$b-1\in J$. Therefore for any $k\in\bbn$, $f^k(J)\cap [a,b-1]\neq\emptyset$.
	In particular, $f^{n-1}(J)\cap [a,b-1]\neq\emptyset$.
	Since $J \cap f^{n-1}(J)=\emptyset$, $b-1\not\in f^{n-1}(J)$,
	thus $f^{n-1}(J)\subset [a,b-1)$ and $f^n(J)=f(f^{n-1}(J))\subset [a,b-1]\neq J$.
	Which is a contradiction since $J$ is a periodic interval with period $n$. Therefore $J\subset [a,b-1]$.
	
    Similarly, if $J\cap [a+1,b]\neq \emptyset$, then $J\subset [a+1,b]$.
	
	Since $f^i(J)$ is a periodic interval with period $n$, by the above discussion we can get the conclusion.
\end{proof}

\begin{lem}
Let $f\in\css$ with $\htop(f)=0$, $\degg(f)=1$ and $\fix(f)\neq\emptyset$. Let $F$ and $I=[a,b]$ be a lifting and an interval provided by Lemma~\ref{lem:I-extension}. If $K$ is a periodic interval of $\bbs$ with period $n>1$. Then there exists a periodic interval $J$ of $I$ with period $n$ such that $e(J) = K$.
\end{lem}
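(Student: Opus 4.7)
My plan is to take $J$ to be a lift of $K$ in $I$, verifying first that any such lift is automatically $F^n$-invariant via a dynamical ``span'' argument, and then exhibiting such a lift by geometric case analysis.

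For the span argument, let $\tilde K \subset I$ be any lift of $K$. Since $|K| \le 1/n < 1$, $e|_{\tilde K}$ will be a homeomorphism onto $K$. For each $i \ge 0$, $F^i(\tilde K) \subset F^i(I) \subset I$ is a connected interval whose image under $e$ is the proper sub-arc $f^i(K) \subsetneq \bbs$; this forces $|F^i(\tilde K)| < 1$, so $F^i(\tilde K)$ is itself a single lift of $f^i(K)$. In particular $F^n(\tilde K)$ is a lift of $K$, hence $F^n(\tilde K) = \tilde K + m$ for some $m \in \bbz$. Using $F(x+1) = F(x) + 1$ I will iterate to obtain $F^{2n}(\tilde K) = \tilde K + 2m \subset I$; since $|I| = b - a < 2$, the span $|K| + 2|m|$ of $\tilde K \cup (\tilde K + 2m)$ satisfies $|K| + 2|m| < 2$, forcing $m = 0$. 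Setting $J := \tilde K$ gives $F^n(J) = J$; the minimal period equals exactly $n$, for if $F^i(J) = J$ with $1 \le i < n$ then $f^i(K) = e(F^i(J)) = e(J) = K$, contradicting that $K$ has period $n$ under $f$, and pairwise disjointness $F^i(J) \cap J = \emptyset$ for $1 \le i < n$ follows similarly from $f^i(K) \cap K = \emptyset$.

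It therefore remains to exhibit a lift $\tilde K \subset I$. I write $A = e([a, b-1])$ and $B = e([b-1, a+1])$ for the two closed sub-arcs of $\bbs$ covering $\bbs$, meeting at $p = e(a)$ and $q = e(b-1)$. If $K \subset A$, the homeomorphism $e|_{[a, b-1]}$ yields a lift $\tilde K \subset [a, b-1] \subset I$; if $K \subset B$, similarly $\tilde K \subset [b-1, a+1] \subset I$. I will rule out the remaining case that $p$ or $q$ lies in the interior of $K$. Suppose $p$ lies in $\mathrm{int}(K)$, and let $\tilde K_0 = [c_0, c_0 + |K|]$ be the lift of $K$ with $a \in \mathrm{int}(\tilde K_0)$, so $c_0 \in (a - |K|, a)$. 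If $c_0 \le b - 1 - |K|$, then the translate $\tilde K_0 + 1 \subset I$ contains $a+1$ in its interior, and the span argument makes it a periodic interval of $F$ of period exactly $n > 1$; this contradicts Lemma~\ref{lem:F-periodic-subinterval}, which would require it to be contained in one of $[a, b-1]$, $[a+1, b]$, $(b-1, a+1)$. Otherwise $c_0 > b - 1 - |K|$, giving $c_0 + |K| > b - 1$, hence $\tilde K_0 \cap I \supset [a, b-1]$ and therefore $A \subset K$; combined with $f(A) \subset A$ from Lemma~\ref{lem:I-extension}, this yields $\emptyset \neq f(A) \subset K \cap f(K)$, contradicting the disjointness of the $f$-orbit of $K$. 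The case that $q \in \mathrm{int}(K)$ is handled symmetrically, and the degenerate case $b = a + 1$ is trivial since $F([a, a]) \subset [a, a]$ forces $F(a) = a$, placing $p = e(a)$ in $\fix(f)$ and hence outside $K$.

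The hardest step will be the case analysis ruling out that $K$ straddles $p$ or $q$, which requires combining Lemma~\ref{lem:F-periodic-subinterval} with the invariance $f(A) \subset A$; the constraint $|K| \le 1/n$ alone does not seem to suffice, since for $|K| > b - 1 - a$ no lift of $K$ need a priori fit inside $I$.
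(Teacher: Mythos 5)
Your proof is correct, and while it follows the same broad strategy as the paper (lift $K$ into $I$ and verify periodicity of the lift), the two write-ups diverge at exactly the two delicate points. For the verification that a lift $J\subset I$ satisfies $F^n(J)=J$ (rather than $J+m$ for some $m\in\bbz$), the paper argues via injectivity of $e$ on $[a,b-1]$ or $(b-1,a+1)$, which implicitly assumes $F^n(J)$ lands back in the same subinterval; your doubling trick ($F^{2n}(\tilde K)=\tilde K+2m\subset I$ and $|I|<2$ force $m=0$) is cleaner, works for an arbitrary lift contained in $I$, and makes explicit use of the degree-one relation $F(x+1)=F(x)+1$. For the existence of a lift inside $I$, the paper simply asserts that some lift lies in $[a,a+1)$ or $(b-1,b]$ and only afterwards rules out straddling of $b-1$; you instead rule out up front that $e(a)$ or $e(b-1)$ lies in $\mathrm{int}(K)$, using Lemma~\ref{lem:F-periodic-subinterval} in one subcase and the invariance $F([a,b-1])\subset[a,b-1]$ from Lemma~\ref{lem:I-extension} in the other, and you also dispose of the degenerate case $b=a+1$ where $e$ fails to be injective on $[b-1,a+1]$. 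Your version closes genuine gaps in the paper's argument. One small quibble: the inequality $|K|\le 1/n$ is not justified (the arcs $K,f(K),\dotsc,f^{n-1}(K)$ need not have equal length since $f$ is not an isometry), but all you actually use is $|K|<1$, which does follow from the disjointness of $K$ and $f(K)$, so nothing breaks.
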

\begin{proof}
Since $K,f(K),\cdots ,f^{n-1}(K)$ are mutually disjoint and $|K|<1$, there exists an interval $J\subset [a,a+1)$ or $J\subset (b-1,b]$ such that 
$e(J) = K$. Without loss of generality, we assume that $J\subset [a,a+1)$.
Since $e\circ F^i=f^i\circ e$,  $J,F(J),\cdots ,F^{n-1}(J)$ are mutually disjoint and $e(F^n(J))=f^n(e(J))=f^n(K)=K=e(J)$.

If $J\cap [a,b-1]\neq\emptyset$ and $J\cap (b-1,a+1)\neq\emptyset$,
then $b-1\in J$. Since $F([a,b-1])\subset [a,b-1]$, 
for any $i$, $F^i(J)\cap [a,b-1]\neq\emptyset$ and in particular $F^{n-1}(J)\cap [a,b-1]\neq\emptyset$.
Note that $J\cap F^{n-1}(J)=\emptyset$, $b-1\not\in F^{n-1}(J)$,
and $F^{n-1}(J)$ is an interval, then $F^{n-1}(J)\subset [a,b-1)$.
But $F([a,b-1])\subset [a,b-1]$ and $F^{n}(J)\subset[a,b-1]$ which is a contradiction since $e(F^n(J))=K=e(J)$. Therefore either $J\subset(b-1,a+1)$ or $J\subset [a,b-1]$.

Since both $e|_{(b-1,a+1)}$ and $e|_{[a,b-1]}$ are one to one,
$F^n(J)=e^{-1}|_{(b-1,a+1)}(K)=J$ and $F^n(J)=e^{-1}|_{[a,b-1]}(K)=J$. In conclusion $J$ is a periodic interval with period $n$.

\end{proof}

\section{The equivalence of non-separable pairs, IT-pairs and IN-pairs}\label{section:Circle map and its Lift}
In this section we first introduce non-separable pairs for both interval maps and circle maps. Then we study the relationship between the non-separable pairs of $f$ and $f^p$ and the relationship between the non-separable pairs of $f$ and its lifting.

\begin{defn}\label{defn: non-sep-1}
	Let $f\in C(M,M)$.
	A pair $\langle x,y\rangle \in M\times M$ is called \emph{separable}
	if there exist two periodic intervals $J_1$ and $J_2$ such that $J_1\cap J_2=\emptyset$, $x\in J_1$ and $y\in J_2$.  
	A pair $\langle x,y\rangle \in M\times M$ with $x\neq y$
	is called \emph{non-separable} if there exists $z\in M$ such that $x,y\in \omega(z,f)$ and $\langle x,y\rangle$ is not separable.
	Denote by $NS(M,f)$ the set of all non-separable pairs of $f$.
\end{defn}

Note that we require that the points in a non-separable pair are in 
the same $\omega$-limit set.
We have the following characterization of non-separable pairs.

\begin{lem}\label{lem:non-sep}
	Let $f\in C(M,M)$, $\htop(f)=0$ and $\per(f)\neq\emptyset$.
	Assume that $x\neq y\in M$ and there exists $z\in M$ such that 
	$x,y\in \omega(z,f)$.
	Then $\langle x, y\rangle $ is non-separable if and only if there exists a sequence of periodic intervals $\{K_n\}_{n=1}^{\infty}$  with periods $\{m_n\}_{n=1}^\infty$ such that $K_{n+1}\subset K_{n}$, 
	$\lim_{n\to\infty}m_n=\infty$ and $x,y\in K_n$ for all $n\in\bbn$.
\end{lem}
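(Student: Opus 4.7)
The plan is to prove each direction separately. The ``if'' direction is a disjointness-counting argument using iterates of $K_n$, and the ``only if'' direction applies the structure theorem for $\omega$-limit sets (Lemma~\ref{rem: period portion}).

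For $(\Leftarrow)$, I argue by contradiction. Suppose the nested periodic intervals $K_n$ with periods $m_n\to\infty$ exist, but $\langle x,y\rangle$ is separable via disjoint periodic intervals $J_1\ni x$ and $J_2\ni y$ of periods $p_1,p_2$. Setting $p=p_1p_2$, both $J_1$ and $J_2$ are $f^p$-invariant, so $f^{jp}(x)\in J_1$ and $f^{jp}(y)\in J_2$ for every $j\ge 0$. Hence each $f^{jp}(K_n)$ is a connected subarc containing a point of $J_1$ and a point of $J_2$. An elementary topological analysis shows that such an arc must, on the interval, contain the whole gap between $J_1$ and $J_2$, and on the circle contain at least one of the two complementary open arcs of $J_1\cup J_2$; consequently at most one (interval) or two (circle) such arcs can be pairwise disjoint. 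However, among $\{f^{jp}(K_n)\}_{j\ge 0}$ there are $m_n/\gcd(m_n,p)$ distinct pairwise disjoint arcs, which exceeds two once $m_n>2p$. This contradicts the pairwise disjointness of the orbit of $K_n$.

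For $(\Rightarrow)$, I first observe that $\omega(z,f)$ must be infinite: otherwise Lemma~\ref{lem:omega-x} would force it to be a periodic orbit, making $x$ and $y$ periodic points that are then separated by the singleton periodic intervals $\{x\}$ and $\{y\}$. Lemma~\ref{rem: period portion} thus provides nested periodic intervals $(J_n)$ with periods $k_n$ satisfying $k_{n+1}$ a multiple of $k_n$ (with $k_n\to\infty$, by the splitting in condition~(2)) such that $\omega(z,f)\subset\bigcap_n\bigcup_{i=0}^{k_n-1}f^i(J_n)$. For each $n$, $x$ lies in some $f^{i_n}(J_n)$ and $y$ in some $f^{j_n}(J_n)$; if these two pieces differed, they would be disjoint periodic intervals of period $k_n$ separating $\langle x,y\rangle$, contradicting non-separability. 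So $f^{i_n}(J_n)=f^{j_n}(J_n)=:K_n$, a periodic interval of period $k_n$ containing both $x$ and $y$. To see $K_{n+1}\subset K_n$, observe that $K_{n+1}=f^{i_{n+1}}(J_{n+1})\subset f^{i_{n+1}}(J_n)$, and $f^{i_{n+1}}(J_n)$ shares the point $x$ with $K_n=f^{i_n}(J_n)$; since these are two pieces of the orbit of the periodic interval $J_n$, they must coincide.

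The main obstacle is the topological counting step in $(\Leftarrow)$, especially on the circle: I need to verify that a proper subarc containing points of two disjoint closed arcs $J_1,J_2$ must entirely contain at least one of the two gaps. The key observation is that the complementary open arc $\bbs\setminus f^{jp}(K_n)$ is connected, so if it met both gaps it would have to engulf all of $J_1$ or all of $J_2$, forbidden since $f^{jp}(K_n)$ meets both. Once that geometric fact is in hand the counting argument is immediate.
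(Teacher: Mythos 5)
Your proof is correct, and while the $(\Rightarrow)$ direction follows essentially the paper's route, the $(\Leftarrow)$ direction is genuinely different. For $(\Rightarrow)$ you do what the paper does: rule out the finite case (both you and the paper implicitly treat singletons of a periodic orbit as periodic intervals), invoke Lemma~\ref{rem: period portion}, and use non-separability to place $x$ and $y$ in a single component $f^{i_n}(J_n)$ of each level; your verification of the nesting $K_{n+1}\subset K_n$ via disjointness of the pieces of the orbit of $J_n$ is a detail the paper leaves implicit. For $(\Leftarrow)$, the paper instead squeezes the cycle of some $K_{n_3}$ inside the cycle of $J_{n_2}$ of the period portion, using Lemma~\ref{lem:periodic-splitting-intersect} together with an external result (Lemma~2.8(5) of \cite{LOZ18}), and then gets a contradiction from the connectedness of $K_{n_3}$. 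You instead count the $m_n/\gcd(m_n,p)$ pairwise disjoint connected images $f^{jp}(K_n)$, each containing $f^{jp}(x)\in J_1$ and $f^{jp}(y)\in J_2$ and hence forced to contain one of the (at most two) gaps of $M\setminus(J_1\cup J_2)$; once $m_n>2p$ this is impossible. The geometric step is sound: each $f^{jp}(K_n)$ is a proper closed subarc for $m_n\geq 2$ by pairwise disjointness, and its connected complement, if it met both gaps, would have to contain all of $J_1$ or all of $J_2$, contradicting that the arc meets both. Your version is more elementary and self-contained --- it bypasses the period-portion machinery and the reference to \cite{LOZ18} entirely in that direction, and does not even use $\htop(f)=0$ there --- whereas the paper's route reuses Lemma~\ref{lem:periodic-splitting-intersect}, which it has already set up for other purposes.
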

\begin{proof} 
	If $\omega(z,f)$ is finite, then it is a periodic orbit.
	Therefore $\langle x,y\rangle$ is separable and the result is clear.
	Now we consider the situation when $\omega(z,f)$ is infinite.
	Let $(J_n)_{n\in\bbn}$ and $(k_n)_{n\in\bbn}$ 
	provided by Lemma~\ref{rem: period portion} for the period portion of $\omega(z,f)$.
	
	$(\Rightarrow)$ 
	If $\langle x,y\rangle $ is non-separable,
	then for each $n\in\bbn$ there exists $i_n\in\{0,1,\dotsc,k_n-1\}$
	such that $x,y\in f^{i_n}(J_n)$.
	Thus $(J_n)_{n\in\bbn}$ and $(k_n)_{n\in\bbn}$  are as required.
	
	$(\Leftarrow )$ 
	Assume that $\langle x,y\rangle$ is separable, that is, 
	there exist two periodic intervals $M_1$ and $M_2$ such that $M_1\cap M_2=\emptyset$, $x\in M_1$ and $y\in M_2$. 
	Denote by $p_1$ and $p_2$ the period of $K_1$ and $K_2$ respectively.
	By Lemma~\ref{lem:periodic-splitting-intersect}, 
	there exist  $n_1,n_2\in\bbn$ such that 
	$\bigcup_{i=0}^{k_{n_1}-1}f^i(J_{n_1})\subset \bigcup_{i=0}^{p_1-1}f^i(M_1)$
	and $\bigcup_{i=0}^{k_{n_2}-1}f^i(J_{n_2})\subset \bigcup_{i=0}^{p_2-1}f^i(M_2)$.
	Without loss of generality, we assume that $n_1\leq n_2$, then $J_{n_1}\supset J_{n_2}$.
	There exist unique $i_1, i_2\in \{0,1,\dotsc,k_{n_2}-1\}$ such that
	$x\in f^{i_1}(J_{n_2})$ and $y\in f^{i_2}(J_{n_2})$.
	As $x\in M_1$, $y\in M_2$ and $M_1\cap M_2=\emptyset$, we have $i_1\neq i_2$.
 By Lemma \ref{rem: period portion}, $x$ is not eventually periodic.
 For each $n\in\bbn$, 
 there exists $j\in\bbn$ such that $f^j(x)$ are in the interior of 
 $\bigcup_{i=0}^{m_n-1}f^i(K_n)$ and then $\omega(z,f)\subset  \bigcup_{i=0}^{m_n-1}f^i(K_n)$. 
 By Lemma 2.8(5) of \cite{LOZ18}, 
 there exists $n_3\in\bbn$ such that $\bigcup_{i=0}^{m_{n_3}-1}f^i(K_{n_3})\subset \bigcup_{i=0}^{k_{n_2}-1}f^i(J_{n_2})$. 
 Note that $x,y\in K_{n_3}$ and then there exists $i_0\in \{0,1,\dotsc,k_{n_2}-1\}$
 such that $x,y\in f^{i_0}(J_{n_2})$. This is a contradiction.
\end{proof}

\begin{lem}\label{lem:non-sep-p}
	Let $f\in C(M,M)$, $\htop(f)=0$ and $\per(f)\neq\emptyset$.
	Then for $p\in\bbn$ which is the minimal period provided by Theorem \ref{thm:deg-1-period}, $NS(M,f)=NS(M,f^p)$.
\end{lem}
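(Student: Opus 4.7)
The plan is to prove the two inclusions $NS(M,f)\subset NS(M,f^p)$ and $NS(M,f^p)\subset NS(M,f)$ separately, invoking Lemma~\ref{lem:non-sep} for $f^p$ (valid since $\htop(f^p)=0$ and $\per(f)\subset\per(f^p)$). The inclusion $NS(M,f^p)\subset NS(M,f)$ is the easy direction: from $x,y\in\omega(z,f^p)\subset\omega(z,f)$ the witness point carries over, and any disjoint $f$-periodic intervals $J_1\ni x$, $J_2\ni y$ of $f$-periods $m_1,m_2$ remain $f^p$-periodic (of periods $m_i/\gcd(m_i,p)$) because the $f^p$-orbit of each $J_i$ is a sub-cycle of its $f$-orbit and hence pairwise disjoint; such a separation by $f^p$-periodic intervals would contradict the hypothesis.

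For $NS(M,f)\subset NS(M,f^p)$, suppose $\langle x,y\rangle\in NS(M,f)$ with witness $z$. Lemma~\ref{lem:non-sep} yields nested $f$-periodic intervals $K_n\ni x,y$ with $f$-periods $m_n\to\infty$, and by Remark~\ref{rem:deg-1-period-interval} eventually $p\mid m_n$; thus each such $K_n$ is $f^p$-periodic of period $m_n/p\to\infty$, supplying the nested-interval input that Lemma~\ref{lem:non-sep} demands for $f^p$. All that is left is to produce a point $z'$ with $x,y\in\omega(z',f^p)$.

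To produce $z'$, apply Lemma~\ref{rem: period portion} to $z$, obtaining periodic intervals $(J_n)$ of strictly increasing $f$-periods $(k_n)$ with $\omega(z,f)\subset\bigcap_n\bigcup_{i=0}^{k_n-1}f^i(J_n)$. Non-separability forces $x,y\in f^{i_n}(J_n)$ for a common index $i_n$ at each level, and the refinement $J_{n+1}\subset J_n$ together with Lemma~\ref{rem: period portion}(2) gives $i_{n+1}\equiv i_n\pmod{k_n}$, so the residue $r:=i_n\bmod p$ stabilises for large $n$. Because the closed intervals $\{f^j(J_n)\}_{j=0}^{k_n-1}$ are pairwise disjoint and compact, they are separated by a positive gap $\delta_n$; uniform continuity of $f$ on $M$ then yields an $N_n$ and an offset $\alpha_n$ such that for all $N\geq N_n$, $f^N(z)$ lies in the $\delta_n/3$-neighbourhood of the unique interval $f^{\alpha_n+N\bmod k_n}(J_n)$. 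Consequently, if $f^{n_k}(z)\to x$ and $f^{m_k}(z)\to y$, then eventually $n_k,m_k\equiv i_n-\alpha_n\pmod{k_n}$, and hence both are $\equiv r\pmod p$. Writing $n_k=p\ell_k+r$ and $m_k=p\ell'_k+r$ and setting $z':=f^r(z)$ gives $(f^p)^{\ell_k}(z')\to x$ and $(f^p)^{\ell'_k}(z')\to y$, so $x,y\in\omega(z',f^p)$, and Lemma~\ref{lem:non-sep} applied to $f^p$ completes the argument.

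The main obstacle is the residue-matching step: the point $z$ need not be recurrent nor lie in a minimal set, so one cannot exploit the internal structure of $\omega(z,f)$ directly and must instead combine the cyclic disjointness of the period-portion intervals with uniform continuity of $f$ to force the orbit times approaching $x$ and $y$ to share a common residue modulo $p$.
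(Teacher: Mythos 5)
Your proof is correct, and its skeleton matches the paper's: two inclusions, with the backward inclusion $NS(M,f^p)\subset NS(M,f)$ handled in both cases by observing that disjoint $f$-periodic intervals remain disjoint $f^p$-periodic intervals, so a separation for $f$ would yield one for $f^p$. The genuine difference is in the crux of the forward inclusion, namely producing a single point $z'$ with $x,y\in\omega(z',f^p)$. The paper argues algebraically: fixing a level of the period portion with $p\mid k_1$ and writing $\omega(f^{i_0}(z),f^p)=\bigcup_{j=0}^{m-1}f^{i_0+jp}(\omega(z,f^{k_1}))$, it notes that the pieces attached to distinct residues $i_0\not\equiv j_0\pmod p$ are contained in disjoint sub-unions of the cycle $\{f^i(J_1)\}_{i=0}^{k_1-1}$, whereas $x$ and $y$ lie in one and the same interval $f^{i_1}(J_1)$; hence they belong to a common piece $\omega(f^{j_0}(z),f^p)$. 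You instead run an orbit-tracking argument: using the positive separation $\delta_n$ of the cycle $\{f^j(J_n)\}$ together with uniform continuity of $f$, you show that the orbit of $z$ eventually advances cyclically through the $\delta_n/3$-neighbourhoods of these intervals, so the times at which it approaches $x$ and those at which it approaches $y$ are congruent modulo $k_n$, hence modulo $p$ once $p\mid k_n$. Both arguments are valid; the paper's is shorter and avoids the tracking step, while yours is more hands-on and makes explicit why $z$ itself need not serve as the witness and how the common residue arises dynamically. One cosmetic slip: the common residue you actually extract is $(i_n-\alpha_n)\bmod p$ rather than the $r:=i_n\bmod p$ fixed earlier; since all that is needed is that the two sequences of return times share \emph{some} residue modulo $p$, this does not affect the argument.
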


\begin{proof}
	$(\Rightarrow)$ 
	Let $\langle x,y\rangle \in NS(M,f)$.
	By the definition there exists $z\in M$ such that $x,y\in \omega(z,f)$.
	Let $(J_n)_{n\in\bbn}$ and $(k_n)_{n\in\bbn}$ 
	provided by Lemma~\ref{rem: period portion}.
For every $n\in\bbn$, there exists $i_n\in\{0,1,\dotsc,k_n-1\}$ such that 
	$x,y\in f^{i_n}(J_n)$.
It is easy to see that for each $j\in \left\lbrace 0,1,\dotsc,k_n-1\right\rbrace $, $f^{j}(J_n)$ is a periodic interval 
of $f^p$ with period $\lcm(k_n,p)/p$.
In particular, $f^{i_n}(J_n)$ is a periodic interval of $f^p$.
Now we show that there exists $j_0\in\bbn$ such that $x,y\in \omega(f^{j_0}(z),f^p)$. 

By Lemma~\ref{thm:interval-entropy-0}, Lemma~\ref{thm:deg-1-period}
and Remark~\ref{rem:deg-1-period-interval},
there exists some $n$ large enough such that $p$ divides $k_n$.
Without loss of generality, $p$ dividess $k_1$. Denote $m=\frac{k_1}{p}$.
If $x\in \omega(f^{i_0}(z),f^p)$,  $y\in \omega(f^{j_0}(z),f^p)$ where $i_0\neq j_0$,
by Lemma~\ref{rem: period portion}, we can assume that $\omega(z,f^{k_1})\subset J_1$.
Then
\[\omega(f^{i_0}(z),f^p) = \bigcup_{j=0}^{m-1} f^{i_0+jp}(\omega(z,f^{k_1})) \subset \bigcup_{j=0}^{m-1} f^{i_0+jp}(J_1) \]
and 
\[\omega(f^{j_0}(z),f^p) = \bigcup_{j=0}^{m-1} f^{j_0+jp}(\omega(z,f^{k_1}))\subset \bigcup_{j=0}^{m-1} f^{j_0+jp}(J_1).\]
Therefore $\omega(f^{i_0}(z),f^p)\cap \omega(f^{j_0}(z),f^p)=\emptyset$,
which is a contradiction since $x,y\in f^{i_1}(J_1)$.
Thus there exists $j_0\in\bbn$ such that $x,y\in \omega(f^{j_0}(z),f^p)$.

We can get the conclusion that $\left\langle x,y \right\rangle$ is a non-separable pair of $(M, f^p)$.

$(\Leftarrow)$ Let $\langle x,y\rangle \in NS(M,f^p)$.
By the definition there exists some $z\in\bbs$ such that $x,y\in \omega(z,f^p)$, therefore $x,y\in \omega(z,f)$.
If $\langle x, y\rangle$ is separable for $f$,
then there exist two periodic intervals $J_1$ and $J_2$ for $f$ such that $J_1\cap J_2=\emptyset$, $x\in J_1$ and $y\in J_2$. 
Note that $J_1$ and $J_2$ are also periodic intervals for $f^p$ and then  $\langle x,y\rangle$ is separable for $f^p$.
By Lemma~\ref{lem:non-sep}, this is a contradiction.
Thus $\langle x, y\rangle$ is not separable for $f$ and by lemma~\ref{lem:non-sep} again $\langle x,y\rangle \in NS(M,f)$.
\end{proof}




The following lemma characterizes the relationship between the non-separable pairs of circle map and its lifting. 

\begin{lem}\label{lem:non-sep-e}
Let $f\in \css$, $\htop(f)=0$ and $\fix(f)\neq\emptyset$.
Denote $F$ and $I$ provided by the Lemma~\ref{lem:I-extension}.
Then $e\times e(NS(I,F))=NS(\bbs,f)$.
\end{lem}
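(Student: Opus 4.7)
\bigskip

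\noindent\textbf{Proof proposal.} The plan is to argue both inclusions by translating between the characterisations of non-separability given by Lemma~\ref{lem:non-sep}, using the semi-conjugacy $e\circ F|_I=f\circ e$. Note that we may apply Lemma~\ref{lem:non-sep} to either system: $\per(f)\neq\emptyset$ by hypothesis, and the construction in Lemma~\ref{lem:I-extension} guarantees a fixed point of $F$ in $I$, so $\per(F|_I)\neq\emptyset$; moreover $\htop(F|_I)\leq\htop(f)=0$ by Lemma~\ref{lift-entropy}, and $\htop(f)\leq\htop(F|_I)$ by Lemma~\ref{lem: conjugate and entropy} since $|I|\geq 1$.

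For the forward inclusion, take $\langle x,y\rangle\in NS(I,F)$. By Lemma~\ref{lem:non-sep}, there exist $z\in I$ with $x,y\in\omega(z,F|_I)$ and a nested sequence $(K_n)$ of periodic subintervals of $I$ under $F|_I$, with periods $m_n\to\infty$, each containing $x,y$. Since the iterates of $K_n$ are pairwise disjoint in $I$ we have $|K_n|\leq |I|/m_n<2/m_n$, so eventually $|K_n|<1$ and $e|_{K_n}$ is injective. Then $(e(K_n))$ is a nested sequence of periodic subintervals of $\bbs$ under $f$ with the same periods $m_n$, all containing the distinct points $e(x),e(y)$, and $e(x),e(y)\in e(\omega(z,F|_I))=\omega(e(z),f)$ by Lemma~\ref{lem:W-limit set-conjugate}. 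Another application of Lemma~\ref{lem:non-sep} yields $\langle e(x),e(y)\rangle\in NS(\bbs,f)$.

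For the reverse inclusion, take $\langle u,v\rangle\in NS(\bbs,f)$, pick $w\in\bbs$ with $u,v\in\omega(w,f)$, and let $(L_n)$ be the nested sequence of periodic subintervals of $\bbs$ with periods $m_n\to\infty$ provided by Lemma~\ref{lem:non-sep}. Choose any $z\in e^{-1}(w)\cap I$; then $e(\omega(z,F|_I))=\omega(w,f)$, so we may select $x\in\omega(z,F|_I)$ with $e(x)=u$ and $y\in\omega(z,F|_I)$ with $e(y)=v$, and $x\neq y$. It remains to produce nested periodic subintervals of $I$ under $F|_I$ containing $x,y$ with periods tending to infinity. Here we split by $\degg(f)$. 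When $\degg(f)=0$, the map $e|_{[a,a+1)}$ is a bijection to $\bbs$ and $F|_I$ essentially coincides with an interval map; for $n$ large enough the component of $e^{-1}(L_n)\cap I$ containing $x$ is a subinterval of $[a,a+1)$ that is sent bijectively to $L_n$, hence is a periodic interval of $F|_I$ with period $m_n$ and contains $y$ as well. When $\degg(f)=\pm 1$, Lemma~\ref{lem:degree1-w-limit set} forces $\omega(z,F|_I)$ (and therefore $x,y$) to lie in one of the three regions $[a,b-1]$, $(b-1,a+1)$, $[a+1,b]$, each of length at most $1$. By Lemma~\ref{lem:F-periodic-subinterval} any periodic interval of $F|_I$ of period $>1$ lies in one such region, and conversely, for $n$ large, the final lemma of Section~3.4 produces some lift $J_n$ of $L_n$ that is a periodic interval of period $m_n$; using $F(\cdot+1)=F(\cdot)+1$ the translate $J_n+1$ (when it lies in $I$) is also a periodic interval of the same period, so both components of $e^{-1}(L_n)\cap I$ are periodic intervals. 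Taking $\tilde L_n$ to be the component containing $x$, which lies in the same region as $\omega(z,F|_I)$, we see that $y$ (also in that region and a preimage of $v\in L_n$) belongs to $\tilde L_n$. The $\tilde L_n$ are nested by choice, so Lemma~\ref{lem:non-sep} yields $\langle x,y\rangle\in NS(I,F)$.

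The main obstacle is the $\degg(f)=\pm 1$ case in the reverse direction: $e|_I$ is two-to-one on the overlap region, so given a periodic interval $L_n\subset\bbs$ one must select the \emph{correct} lift in $I$ — namely, the one lying in the same region as $\omega(z,F|_I)$ — and verify simultaneously that this lift is a periodic interval of $F|_I$ with the correct period and contains both $x$ and $y$. The confinement of $\omega$-limit sets to a single region (Lemma~\ref{lem:degree1-w-limit set}), the structure of periodic subintervals (Lemma~\ref{lem:F-periodic-subinterval}), and the translation symmetry $F(\cdot+1)=F(\cdot)+1$ together resolve this difficulty.
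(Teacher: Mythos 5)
Your overall strategy --- pushing the characterisation of Lemma~\ref{lem:non-sep} back and forth through $e$ --- is the same as the paper's, but two steps do not go through as written.

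First, in the forward inclusion you conclude that $e(K_n)$ is a periodic interval of $\bbs$ with period $m_n$ from the injectivity of $e$ on $K_n$ alone. That is not enough: for $e(K_n)$ to have period $m_n$ you need the images $e(F^j(K_n))$, $0\le j<m_n$, to be pairwise disjoint, and since $|I|$ may be as large as $2$, two disjoint iterates $F^i(K_n)$ and $F^j(K_n)$ could differ by an integer translate and have overlapping images under $e$. (Your length estimate $|K_n|\le |I|/m_n$ is also unjustified: the iterates of an interval under $F$ need not have equal lengths, only their total length is bounded by $|I|$.) The repair is exactly Lemma~\ref{lem:F-periodic-subinterval}, which you invoke only in the reverse direction: for $m_n>1$ the whole cycle $\bigcup_{j=0}^{m_n-1}F^j(K_n)$ lies in one of $[a,b-1]$, $(b-1,a+1)$, $[a+1,b]$, each of which $e$ maps injectively, so the images are pairwise disjoint and $|K_n|<1$ follows as well. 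This is how the paper argues its Case 2.

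Second, your treatment of $\degg(f)=-1$ fails. You rely on $F(\cdot+1)=F(\cdot)+1$, but for a lifting of a degree $-1$ map $F(x+1)=F(x)-1$; moreover Lemma~\ref{lem:degree1-w-limit set}, Lemma~\ref{lem:F-periodic-subinterval} and the final (unnumbered) lemma of Section~3.4 are all stated and proved only under the hypothesis $\degg(f)=1$ --- for degree $-1$ Lemma~\ref{lem:I-extension} gives $F([a,b-1])\subset[a+1,b]$ rather than $F([a,b-1])\subset[a,b-1]$, so their proofs do not transfer. The paper sidesteps all of this by reducing the degree $-1$ case to the degree $1$ case: $f^2$ and $F^2$ have degree $1$, and Lemma~\ref{lem:non-sep-p} is used to identify $NS(\bbs,f)$ with $NS(\bbs,f^2)$ and $NS(I,F)$ with $NS(I,F^2)$. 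You should make the same reduction rather than argue degrees $\pm1$ uniformly. The rest of your reverse inclusion (selecting the component of $e^{-1}(L_n)\cap I$ lying in the region that contains $\omega(z,F|_I)$, and the nestedness of these components) is sound for degree $1$, and your degree $0$ case is fine.
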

\begin{proof}
By Lemma~\ref{lem:zero-entropy-non-ext}, 
$|\degg(f)|\leq 1$ and $f$ is non-extensible.
We divide the proof into three cases according to the degree of $f$. 

\textbf{Case 1}: $\degg(f)=0$.
As $|I|=1$, 
it is easy to see that if $J$ is a periodic interval of $I$
with period $n\geq 2$, then $e(J)$ is also a periodic interval of $\bbs$ with period $n$.
And if $K$ is a periodic interval of $\bbs$
with period $n\geq 2$, then we can pick a periodic interval $\widetilde{K}$ of $I$ such that $e(\widetilde{K})=K$.
As $e$ is a homeomorphism when it is restricted to an interval with length less than $1$,  by the definition of non-separable pairs, it is not hard to see that
 $e\times e(NS(I,F))=NS(\bbs,f)$.
 
\textbf{Case 2}: $\degg(f)=1$.
Assume that $\langle x,y\rangle \in NS(I,F)$.
Since $x,y\in \omega(z,F)$ for some $z\in I$, by Lemma~\ref{lem:W-limit set-conjugate}, $e(x),e(y)\in \omega(e(z),f)$. 
By the definition,  there exists a sequence of periodic intervals of $(I,F)$, $\{J_n\}_{n=1}^{\infty}$  with periods $\{m_n\}_{n=1}^\infty$ such that $J_{n+1}\subset J_{n}$, 
$\lim_{n\to\infty}m_n=\infty$ and $x,y\in J_n$ for all $n\in\bbn$. 
Since $m_n\to\infty$, we can assume that $m_n>1$, by Lemma~\ref{lem:F-periodic-subinterval}, 
for any $n\in\bbn$, $\bigcup_{j=0}^{m_n-1} F^j(J_n)\subset [a,b-1]$,
$\bigcup_{j=0}^{m_n-1} F^j(J_n)\subset [a+1,b]$, or 
$\bigcup_{j=0}^{m_n-1} F^j(J_n)\subset (b-1,a+1)$. 
Since $|[a,b-1]|<1$, $|[a+1,b]|<1$ and $|(b-1,a+1)|\leq 1$. Denote $J_n'=e(J_n)$,
it is obvious that $\{f^j(J_n')\}_{j=0}^{m_n-1}$ form a cycle of intervals of $\mathbb{S}$.
Therefore there exists a sequence of periodic intervals of $(\bbs,f)$, $\{J_n^{'}\}_{n=1}^{\infty}$  with periods $\{m_n\}_{n=1}^\infty$ such that $J_{n+1}'\subset J_{n}'$, 
$\lim_{n\to\infty}m_n=\infty$ and $e(x), e(y)\in J_n'$ for all $n\in\bbn$.
We conclude that $\left\langle e(x), e(y) \right\rangle $ is a non-separable pair of $(\bbs,f)$. 

Now we assume that $\langle x, y\rangle\in NS(\bbs,f)$.
By the definition, there exists $z\in\bbs$ such that $x,y\in \omega(z,f)$ and there exists a sequence of periodic intervals $\{J_n\}_{n=1}^{\infty}$ of $(\bbs,f)$ with period $\{m_n\}_{n=1}^\infty$ such that $J_{n+1}\subset J_{n}$,  
$\lim_{n\to\infty}m_n=\infty$ and for any $n\in\bbn$, $x,y\in J_n$.
Since $m_n\to \infty$, we can assume that $m_n>1$. 
According to Lemma~\ref{lem:F-periodic-subinterval}, for any $n$, 
there exists a periodic interval $J_n'$ with period $m_n$ of $I$ such that $e(J_n')=J_n$.
By lemma~\ref{lem:F-periodic-subinterval}, $J_n'\subset [a,b-1]$, $J_n'\subset (b-1,a+1)$ or $J_n'\subset [a+1,b]$ for all $n$.
By taking subsequence, we can assume that for any $n$, $J_n'\subset [a,b-1]$.
There exists some $k$ large enough such that $f^k(z)\in J_1$.
Thus we can take $z'\in J_1'$ such that $e(z')=f^k(z)$ and as a result $e(\omega(z',F))=\omega(f^k(z),f)=\omega(z,f)$.
Taking $x',y'\in  J_1'$ such that $e(x')=x$ and $e(y')=y$.  
Since $|b-1-a|<1$ and $e$ is a homeomorphism when it is restricted to an interval whose length is less than $1$,
$x',y' \in \omega(z',F)$ and $x',y'\in J_n'$.
We get the conclusion that $\left\langle x', y' \right\rangle $ is a non-separable pair of $(I,F)$.

\textbf{Case 3}: $\degg(f)=-1$. 
By Lemma~\ref{lem:non-sep-p}, $NS(\bbs,f)=NS(\bbs,f^2)$
and $NS(I,F^2)=NS(I,F)$.
Note that $\degg(f^2)=1$ and $F^2$ is a lifting of $f^2$. By Case 2, 
$e\times e(NS(I,F^2))=NS(\bbs,f^2)$. 
Thus, $e\times e(NS(I,F))=NS(\bbs,f)$.
\end{proof}

We will need the following result for interval maps.
\begin{thm}[\cite{Li J 2011}]\label{thm:interval-NS-IN-IT}
	Let $f\in C(I,I)$ and $\htop(f)=0$.
	Then $NS(I,f)\setminus\Delta=IT(I,f)\setminus\Delta=IN(I,f)\setminus\Delta$.
\end{thm}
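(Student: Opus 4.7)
The plan is to prove the three conditions cyclically: the implication IT-pair $\Rightarrow$ IN-pair is immediate from the definitions, since an infinite independence set yields arbitrarily large finite independence sets. Hence it suffices to establish non-separable $\Rightarrow$ IT and IN $\Rightarrow$ non-separable.

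For non-separable $\Rightarrow$ IT, given a non-separable pair $\langle x,y\rangle$, I would first invoke the interval analogue of Lemma~\ref{lem:non-sep} to produce a sequence of nested periodic intervals $K_n\ni x,y$ with periods $m_n\to\infty$. The key geometric observation is that when $m_n$ is large the cycle $\{K_n,f(K_n),\dots,f^{m_n-1}(K_n)\}$ is a large family of pairwise disjoint intervals permuted by $f$, and $f^{m_n}$ maps $K_n$ into itself while exhibiting enough spreading within the cycle that one can find points in $K_n$ whose iterates under $f^{m_n}$ land arbitrarily close to either $x$ or $y$ according to any prescribed binary pattern. A compactness/diagonal argument then builds an infinite independence set for any pair of neighborhoods $U_1\ni x, U_2\ni y$.

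The direction IN $\Rightarrow$ non-separable is the subtler one. Assume $\langle x,y\rangle$ is an IN-pair with $x\neq y$. First I would show that $x$ and $y$ lie in a common $\omega$-limit set: using an alternating-pattern realization of arbitrarily long finite independence sets together with compactness, one extracts a point $z$ whose orbit visits every neighborhood of both $x$ and $y$ infinitely often, so $x,y\in\omega(z,f)$. Second, I would rule out separability by contradiction: suppose there exist disjoint periodic intervals $J_1\ni x, J_2\ni y$ with periods $p_1,p_2$. Shrinking the IN neighborhoods so that $U_i\subset J_i$, Lemma~\ref{lem:omega-limit-periodic-interval} forces any orbit entering the cycle of $J_1$ (respectively $J_2$) to stay inside that cycle, so visits to $U_1$ and $U_2$ along a single orbit can occur only at times of specific residues modulo $\lcm(p_1,p_2)$. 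This rigid periodic constraint rules out most of the $2^n$ prescribed patterns that a long finite independence set demands, contradicting the IN-pair property.

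The hardest part is the IN $\Rightarrow$ non-separable direction, where one must combine the zero-entropy structural rigidity (Theorem~\ref{thm:interval-entropy-0} together with Lemma~\ref{rem: period portion} and Lemma~\ref{lem:periodic-splitting-intersect}) with the combinatorial freedom forced by IN independence to derive both the common $\omega$-limit set containment and the absence of separating periodic intervals simultaneously.
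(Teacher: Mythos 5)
First, a point of order: the paper does not prove this statement at all. Theorem~\ref{thm:interval-NS-IN-IT} is imported verbatim from \cite{Li J 2011} and used as a black box, so there is no internal proof to compare yours against. Your cyclic scheme (IT $\Rightarrow$ IN trivially, then NS $\Rightarrow$ IT and IN $\Rightarrow$ NS) does match the architecture of the argument in \cite{Li J 2011}, and your identification of the nested periodic intervals of Lemma~\ref{lem:non-sep} as the relevant structure is the right instinct.

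That said, both nontrivial implications have genuine gaps as sketched. In NS $\Rightarrow$ IT, the sentence claiming that $f^{m_n}$ exhibits ``enough spreading'' to realize every prescribed binary pattern is the entire content of the implication, not an observation: since $\htop(f)=0$ the map has no horseshoes, so the $2^n$ patterns cannot come from exponential stretching; one must instead build the independence set inductively, choosing each successive time at a strictly deeper level $K_{n+1}\subset K_n$ of the solenoidal hierarchy and exploiting that $x$ and $y$ stay in the same component at every level while the orbit returns to that component along an arithmetic progression. Without that inductive mechanism your key step is a restatement of the conclusion. In IN $\Rightarrow$ NS, the compactness step is the weak point: from points $z_n$ realizing alternating patterns of length $n$ you obtain orbits making many visits to $U_1$ and $U_2$, but a limit point of the $z_n$ need not have $x$ and $y$ in its $\omega$-limit set, because the visit times escape to infinity with $n$ and are not controlled for the limit point. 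One needs either a nested, mutually compatible choice of the $z_n$ across scales, or the structural fact that non-diagonal IN-pairs must lie in a single infinite $\omega$-limit set of solenoidal type. Finally, in the separability contradiction, the residue constraint modulo $\lcm(p_1,p_2)$ only applies after the orbit enters the forward-invariant unions $\bigcup_i f^i(J_1)$ and $\bigcup_i f^i(J_2)$, so the transient must be absorbed (and the case where $J_1$ and $J_2$ are disjoint members of one and the same cycle must be treated separately); this part is repairable but not automatic as written.
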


Now we are ready to prove our first main result (Theorem \ref{thm:main-result1}).
Using notations introduced in sections 2 and 3, we can restate
it as follows. 

\begin{thm}\label{thm:IN=IT=NS}
Let $f\in \css$ and $\htop(f)=0$.
Then  $NS(\bbs,f)\setminus\Delta=IT(\bbs,f)\setminus\Delta=IN(\bbs,f)\setminus\Delta$.
\end{thm}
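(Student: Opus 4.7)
The plan is to reduce the circle-map statement to the interval-map version (Theorem~\ref{thm:interval-NS-IN-IT}) by a case split on $\per(f)$ together with a lifting to a forward-invariant interval. First I would dispose of the case $\per(f)=\emptyset$: by Lemma~\ref{lem:f-entropy-no-period-point} the map is topological null, so Theorem~\ref{thm:null-IN} immediately forces $IN(\bbs,f)\setminus\Delta=IT(\bbs,f)\setminus\Delta=\emptyset$, and the absence of periodic points on $\bbs$ together with the $\omega$-limit set structure from Lemmas~\ref{thm:Poincare} and~\ref{lem:not-homermorphism and nowhere dense limit set} should rule out non-diagonal non-separable pairs.

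For the main case $\per(f)\neq\emptyset$, I would invoke Lemma~\ref{lem:zero-entropy-non-ext} to reduce to $|\degg(f)|\le 1$, and then use Theorem~\ref{thm:deg-1-period} to produce the minimal period $p$, so that $f^p$ has a fixed point. Lemmas~\ref{IN-IT-p} and~\ref{lem:non-sep-p} allow me to replace $f$ by $f^p$ and assume $\fix(f)\neq\emptyset$. Apply Lemma~\ref{lem:I-extension} to obtain a lifting $F$ and a closed interval $I\subset\bbr$ with $1\le|I|<2$ and $F(I)\subset I$; by Lemma~\ref{lift-entropy} we have $\htop(F|_I)=0$, and since $|I|\ge 1$, the natural projection $e|_I\colon I\to\bbs$ is a continuous surjection satisfying $e\circ F=f\circ e$, i.e.\ a semi-conjugacy from $(I,F|_I)$ to $(\bbs,f)$. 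Apply Theorem~\ref{thm:interval-NS-IN-IT} on $I$ to obtain
\[
NS(I,F|_I)\setminus\Delta=IT(I,F|_I)\setminus\Delta=IN(I,F|_I)\setminus\Delta,
\]
and then push the three equalities up to $\bbs$ via Lemma~\ref{INIT-conjugate} (for IN and IT) and Lemma~\ref{lem:non-sep-e} (for NS).

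The main subtlety I foresee is that the pushdown must be carried out pointwise on off-diagonal pairs, because $e$ can identify two distinct points of $I$ (its endpoints when $|I|=1$, and any two points differing by $1$ when $|I|>1$), so a pair that is non-diagonal in $I^2$ can collapse to a diagonal pair in $\bbs^2$. Concretely, for $\langle a,b\rangle\in\bbs^2$ with $a\neq b$, membership in $IN(\bbs,f)$ forces the existence of a preimage $\langle x,y\rangle\in IN(I,F|_I)$ with $e(x)=a$ and $e(y)=b$; the constraint $a\neq b$ then automatically gives $x\neq y$, so the preimage already lies in $IN(I,F|_I)\setminus\Delta$, hence in $NS(I,F|_I)\setminus\Delta$ by the interval equivalence, and Lemma~\ref{lem:non-sep-e} delivers $\langle a,b\rangle\in NS(\bbs,f)$. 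The reverse implication and the IT case follow by exactly the same pointwise diagram chase, so this is where the (mild) care in the argument is concentrated.
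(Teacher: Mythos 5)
Your proposal follows essentially the same route as the paper's proof: dispose of the case $\per(f)=\emptyset$ via nullness (Lemmas~\ref{lem:f-entropy-no-period-point} and Theorem~\ref{thm:null-IN}), reduce to $f^p$ with a fixed point using Lemmas~\ref{IN-IT-p} and~\ref{lem:non-sep-p}, lift to the invariant interval of Lemma~\ref{lem:I-extension}, apply Theorem~\ref{thm:interval-NS-IN-IT} there, and transport the three equalities back through Lemmas~\ref{INIT-conjugate} and~\ref{lem:non-sep-e}. Your explicit pointwise check that an off-diagonal pair in $\bbs^2$ pulls back to an off-diagonal pair in $I^2$ addresses a point the paper leaves implicit, but it does not alter the argument.
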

\begin{proof}
 
Since $\htop(f)=0$,  if $f$ has fixed point, by Lemma~\ref{lem:I-extension}, we can find a lifting $F$ of $f$, such that there exists an interval $I$ which satisfys that $1\leq\left| I\right|<2 $ and 
$F|_I$ is an interval map. By Lemma~\ref{lift-entropy}, $\htop(F|_I)\leq \htop(f)=0$, that is $\htop(F|_I)=0$. 

\textbf{$NS(\bbs,f)\subset IN(\bbs,f)$:}
Assume that $\left\langle x,y \right\rangle \in NS(\bbs,f)$,
by Lemma~\ref{lem:non-sep-e},  there exists a non-separable pair
$\left\langle x_1,y_1 \right\rangle $ of $(I,F)$ such that
$\pi(x_1)=x$ and $\pi(y_1)=y$.
By Lemma~\ref{thm:interval-NS-IN-IT}, $\left\langle x_1,y_1 \right\rangle $ is an IN-pair of $(I,F)$. And by Lemma~\ref{INIT-conjugate}, $\left\langle x,y \right\rangle$ is an IN-pair of $(\bbs,f)$.

\textbf{$IN(\bbs,f)\subset IT(\bbs,f)$:}
Assume that $\left\langle x,y \right\rangle \in NS(\bbs,f)$,
by Lemma~\ref{INIT-conjugate},  
there exists a IN-pair
$\left\langle x_1,y_1 \right\rangle $ of $(I,F)$ such that
$\pi(x_1)=x$ and $\pi(y_1)=y$. By Lemma~\ref{thm:interval-NS-IN-IT}, 
$\left\langle x_1,y_1 \right\rangle $ is an IT-pair of $(I,F)$. And by Lemma  ~\ref{INIT-conjugate},  $\left\langle x,y \right\rangle$ is an IT-pair of $(\bbs,f)$.

\textbf{$IT(\bbs,f)\subset NS(\bbs,f)$:}
Assume that $\left\langle x,y \right\rangle \in NS(\bbs,f)$,
by  Lemma~\ref{INIT-conjugate}, 
there exists a IT-pair
$\left\langle x_1,y_1 \right\rangle $ of $(I,F)$ such that
$\pi(x_1)=x$ and $\pi(y_1)=y$.
By lemma~\ref{thm:interval-NS-IN-IT},  
$\left\langle x_1,y_1 \right\rangle $ is an non-separable pair of $(I,F)$. 
And by Lemma~\ref{lem:non-sep-e}
$\left\langle x,y \right\rangle $ is an non-separable pair of $(\bbs,f)$.

If $\per(f)\neq \emptyset$, without loss of generallity we assume that $f$ has a periodic point whose period is $p$, by the proof of the situation when $\fix(f)\neq \emptyset$,  $NS(\bbs,f^p)\setminus\Delta=IT(\bbs,f^p)\setminus\Delta=IN(\bbs,f^p)\setminus\Delta$.
By Lemma~\ref{lem:non-sep-p} and Lemma~\ref{IN-IT-p},   $NS(\bbs,f)\setminus\Delta=IT(\bbs,f\setminus\Delta)=IN(\bbs,f)\setminus\Delta$.

If $\per(f)=\emptyset$, by Lemma~\ref{lem:f-entropy-no-period-point}, $h_{top}^{\infty}(f)=0$,
therefore $f$ has no IN-pairs. By the definition of IN-pair and IT-pair, $f$ has no IT-pairs.
Besides, no periodic points implies no cycle of intervals which in turn implies that there is no non-separable pairs.
\end{proof}



\section{Maximal pattern entropy and topological null systems}
\label{section:Maximal pattern} 
In this section we first introduce maximal pattern entropy and some related concepts. 

\begin{defn} 
	Let $(X,f)$ be a topological dynamical system and
	$\calu\in \calc_X^o$. 
	The \emph{maximal pattern entropy of $(X,f)$} with respect to  $\calu$ is defined as  
	\[\htop ^*(f, \calu)=\lim_{n \to \infty}\frac{1}{n}\log(p^*_{X,\calu}(n))\]
where
	\[p^*_{X,\calu}(n)=\max_{( t_{1},t_{2},t_{3},\cdots,t_{n}) \in \bbz_+^n}  N \bigg (\bigvee_{i=1}^{n}f^{-t_i}\calu \bigg ),\]
and the \emph{maximal pattern entropy} of $(X,f)$ is  	
	\[\htop^*(f)=\sup_{\calu\in\calc_X^o}\htop^*(f, \calu).\]
In \cite{Huang W Ye X}, Huang and Ye showed that 
$\htop^*(f)=\htop^\infty(f)$. 
\end{defn}

\begin{defn}
	Let $(X,f)$ be a topological dynamical system and $\calu\in\calc_X^0$.
	We say that $p^*_{X,\calu}$ is of polynomial order
	if there exists $C>0$ such that $p^*_{X,\calu}(n)\leq Cn^C$ for all $n\in\bbn$.
\end{defn}

We first have the following observation.

\begin{lem}\label{lem:F-poly-f}
	\begin{enumerate}
		\item Let $(X,f)$ be a topological dynamical system and $k\in\bbn$.
		Then  $p^*_{X,\calu}$ with respect to $f$ is of polynomial order 
		for all $\calu\in\calc_X^o$ if and only if so is with respect to $f^k$.
		\item Let $e\colon (X,f)\to (Y,g)$ be a factor map of two dynamical systems.
		If $p^*_{X,\calu}$  is of polynomial order 
		for all $\calu\in\calc_X^o$
		then  $p^*_{Y,\calv}$  is of polynomial order 
		for all $\calv\in\calc_Y^o$.
	\end{enumerate}	
\end{lem}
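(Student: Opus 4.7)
The plan is to prove the two parts separately, using only elementary properties of the counting function $N$: namely $N(\calu\vee\calv)\leq N(\calu)N(\calv)$, $N(f^{-1}\calu)\leq N(\calu)$, and $N(\pi^{-1}\calu)=N(\calu)$ whenever $\pi$ is surjective.

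For part (1), the forward direction ($f$ polynomial implies $f^k$ polynomial) is immediate, since for any $(t_1,\dotsc,t_n)\in\bbz_+^n$ the tuple $(kt_1,\dotsc,kt_n)$ also lies in $\bbz_+^n$ and $\bigvee_{i=1}^n(f^k)^{-t_i}\calu=\bigvee_{i=1}^n f^{-kt_i}\calu$, giving $p^*_{X,\calu}(n;f^k)\leq p^*_{X,\calu}(n;f)$. The converse is the main content. Given $(t_1,\dotsc,t_n)$, write $t_i=kq_i+r_i$ with $r_i\in\{0,1,\dotsc,k-1\}$, and partition $\{1,\dotsc,n\}$ by residue, $I_r=\{i\colon r_i=r\}$. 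Then
\[
\bigvee_{i=1}^n f^{-t_i}\calu
=\bigvee_{r=0}^{k-1}f^{-r}\Bigl(\bigvee_{i\in I_r}(f^k)^{-q_i}\calu\Bigr),
\]
and the submultiplicativity of $N$ together with $N(f^{-r}\cdot)\leq N(\cdot)$ yields
\[
N\Bigl(\bigvee_{i=1}^n f^{-t_i}\calu\Bigr)\leq\prod_{r=0}^{k-1}p^*_{X,\calu}(|I_r|;f^k).
\]
If $p^*_{X,\calu}(m;f^k)\leq Cm^C$ for all $m$, then the product is bounded by $C^k n^{kC}$, so $p^*_{X,\calu}(n;f)$ is polynomial.

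For part (2), given $\calv\in\calc_Y^o$ take $\calu=e^{-1}\calv\in\calc_X^o$. Because $e$ is onto, any subcover of $e^{-1}\mathcal{W}$ is the preimage of a subcover of $\mathcal{W}$ and vice versa, so $N(e^{-1}\mathcal{W})=N(\mathcal{W})$ for every open cover $\mathcal{W}$ of $Y$. The semi-conjugacy relation $e\circ f=g\circ e$ gives $e^{-1}g^{-t}\calv=f^{-t}\calu$ for every $t$, so
\[
N\Bigl(\bigvee_{i=1}^n g^{-t_i}\calv\Bigr)=N\Bigl(\bigvee_{i=1}^n f^{-t_i}\calu\Bigr)\leq p^*_{X,\calu}(n).
\]
Taking the supremum over $(t_1,\dotsc,t_n)\in\bbz_+^n$ yields $p^*_{Y,\calv}(n)\leq p^*_{X,\calu}(n)$, which inherits the polynomial bound from $\calu$.

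The only step with any subtlety is the residue decomposition in part (1)$(\Leftarrow)$; once that rearrangement is in place, the estimates are routine. Part (2) is essentially the standard fact that factor maps do not increase combinatorial complexity. No step requires the circle structure — this is a general topological lemma.
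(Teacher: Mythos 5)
Your argument is correct, and parts (1)($\Rightarrow$) and (2) coincide with the paper's proof. The one place where you genuinely diverge is the converse of (1). Both proofs start from the Euclidean decomposition $t_i=kq_i+r_i$, but the paper absorbs all the residues at once by passing to the refined cover $\calv=\bigvee_{i=0}^{k-1}f^{-i}(\calu)$ and observing that $\bigvee_{i=1}^{n}f^{-t_i}\calu$ is refined by $\bigvee_{i=1}^{n}(f^k)^{-q_i}\calv$, which gives the clean inequality $p^*_{X,\calu,f}(n)\leq p^*_{X,\calv,f^k}(n)$; it then invokes the hypothesis for the (different) cover $\calv$. You instead keep the original cover $\calu$, split the join into $k$ residue blocks, and use submultiplicativity of $N$ to get a product bound $\prod_{r}p^*_{X,\calu,f^k}(|I_r|)\leq C^{k}n^{kC}$ (with the convention that empty residue classes contribute a trivial factor, which you should perhaps state). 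The trade-off: the paper's route gives a sharper bound and a one-line inequality but needs the ``for all open covers'' quantifier in the hypothesis, whereas your route degrades the exponent from $C$ to $kC$ but only uses the hypothesis for the single cover $\calu$, so it actually proves the slightly stronger cover-by-cover statement that polynomial order of $p^*_{X,\calu}$ for $f^k$ implies polynomial order of $p^*_{X,\calu}$ for $f$ with the same $\calu$. Either version suffices for the lemma as stated.
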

\begin{proof}
	(1) Let $\calu\in\calc_X^o$. $p^*_{X,\calu}$ with respect to $f$ and $f^k$ are denoted by $p^*_{X,\calu,f}$
	and $p^*_{X,\calu,f^k}$ respectively.
	It is clear that $p^*_{X,\calu,f^k}(n)\leq p^*_{X,\calu,f}(kn)$ for all $n\in\bbn$.
	If $p^*_{X,\calu,f}$ is of polynomial order, then
	so is $p^*_{X,\calu,f^k}$.
	On the other hand, 
	$p^*_{X,\calu,f}(n)\leq  p^*_{X,\bigvee_{i=0}^{k-1}f^{-i}(\calu),f^k}(n)$ for all $n\in\bbn$.
	If $p^*_{X,\bigvee_{i=0}^{k-1}f^{-i}(\calu),f^k}$ is of polynomial order, then
	so is $p^*_{X,\calu,f}$.
	
	(2) Let $\calv\in \calc_Y^o$.
	Then $\pi^{-1}(\calv)\in\calc_X^o$.
	It is clear that $p^*_{Y,\calv,g}(n)\leq p^*_{X,\pi^{-1}(\calv),f}(n)$ for all $n\in\bbn$.
	If $p^*_{X,\pi^{-1}(\calv),f}$ is of polynomial order, then
	so is $p^*_{Y,\calv,g}$.
\end{proof}

\begin{rem}
	It is clear that if $p^*_{X,\calu}$ is of polynomial order
	then $\htop ^*(f, \calu)=0$.
	It is shown in \cite{Huang W Ye X} that 
	if $\calu$ is a clopen (i.e. closed and open) partition 
	of $X$ then $\htop ^*(f, \calu)=0$
	implies that $p^*_{X,\calu}(n)$ is of polynomial order.
	As a consequence, if $X$ is zero dimensional, 
	then $(X,f)$ is null if and only if 
	$p^*_{X,\calu}(n)$ is of polynomial order
	for all $\calu\in\calc_X^o$.
\end{rem}

\begin{defn}\label{defn:spanning-set}
	Let $(X,f)$ be a topological dynamical system
	and $A=\lbrace a_1,a_2,\dotsc  \rbrace$
	be an increasing sequence of non-negative integers.
	For $\epsilon>0$, 
	a subset $E$ of $X$ is called a \emph{$(A,n,\epsilon)$-spanning set} if for any $x\in X$, 
	there is some $y\in E$ such that 
	$d(f^{a_{i}}(x),f^{a_{i}}(y)) <\epsilon$
	for all $i\in {1,2,\dotsc,n}$.
	We define $r_A(n,\epsilon)$ as the minimal cardinality
	of $(A,n,\epsilon)$-spanning sets of $X$ 
	and 
	\[r^*(n,\epsilon)=\sup_A r_A(n,\epsilon),\]
	where the supreme ranges over all  increasing sequences of non-negative integers.
	
	We say that a subset $F$ is \emph{$(A,n,\epsilon)$-separated set} of $X$
	if for any $x,y\in F$, there is some  $i\in\{1,2,\dotsc,n\} $ such that $d(f^{a_{i}}(x),f^{a_{i}}(y))\geq\epsilon$.
	We define $s_A(n,\epsilon)$ as the maximal cardinality
	of $(A,n,\epsilon)$-separated subsets of $X$ 
	and 
	\[s^*(n,\epsilon)=\sup_A s_A(n,\epsilon),\]
	where the supreme ranges over all  increasing sequences of non-negative integers.
	
	It is clear that $r_A(n,\epsilon)\leq s_A(n,\epsilon)$
	and then $r^*(n,\epsilon)\leq s^*(n,\epsilon)$.
\end{defn}

The following lemma is standard in topological dynamics, see e.g. \cite[Theorem 7.7]{W82}.

\begin{lem}\label{lem:open-cover, spanning-set and separated-set}
Let $(X,f)$ be a topological dynamical system and $A=\left\lbrace a_1,a_2,\dotsc\right \rbrace$
be an increasing sequence of non-negative integers.
\begin{enumerate}
	\item Let $\calu\in \calc_X^o$ and $\delta$ be a Lebesgue number of  $\calu$.
	Then for every $n\in\bbn$, 
	\[N\biggl(\bigvee_{i=1}^n f^{-t_i} (\calu)\biggr)\leq r_A(n,\frac{\delta}{2})\leq s_A(n,\frac{\delta}{2}).\]
	\item Let $\epsilon>0$ and choose $\calv\in \calc_X^o$ with a Lebesgue number less than $\epsilon$.
	Then 
	\[
	r_A(n,\epsilon)\leq s_A(n,\epsilon)\leq N\biggl(\bigvee_{i=1}^n f^{-t_i} (\calv)\biggr).
	\]
\end{enumerate}
\end{lem}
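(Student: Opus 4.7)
The plan is to follow the standard proof pattern relating covering numbers of iterated joins to the cardinalities of spanning and separated sets, treating the arbitrary increasing sequence $A=\{a_1,a_2,\dotsc\}$ in place of $\{0,1,\dotsc,n-1\}$; the argument is essentially identical to the classical case (cf.\ Walters \cite[Theorem 7.7]{W82}) and requires only bookkeeping with $f^{-a_i}$ rather than $f^{-i}$. I will prove the three inequalities in part (1) and part (2) in turn, with the middle inequality $r_A\leq s_A$ in each case being a single observation about maximality of separated sets.

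First I would prove $N(\bigvee_{i=1}^n f^{-a_i}(\calu))\leq r_A(n,\delta/2)$. Fix a minimal $(A,n,\delta/2)$-spanning set $E$ with $\#(E)=r_A(n,\delta/2)$. For each $y\in E$ and each $i\in\{1,\dotsc,n\}$, the open ball $B(f^{a_i}(y),\delta/2)$ has diameter at most $\delta$, so by the Lebesgue number property there exists $U_i(y)\in\calu$ with $B(f^{a_i}(y),\delta/2)\subset U_i(y)$. Define $W(y)=\bigcap_{i=1}^n f^{-a_i}(U_i(y))$. If $x\in X$, there exists $y\in E$ with $d(f^{a_i}(x),f^{a_i}(y))<\delta/2$ for all $i$, whence $x\in W(y)$. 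Thus $\{W(y)\}_{y\in E}$ is a subcover of $\bigvee_{i=1}^n f^{-a_i}(\calu)$ of cardinality at most $r_A(n,\delta/2)$. The inequality $r_A(n,\delta/2)\leq s_A(n,\delta/2)$ follows from the standard observation that a maximal $(A,n,\delta/2)$-separated set is automatically $(A,n,\delta/2)$-spanning, for otherwise one could enlarge it.

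Second, for part (2), again $r_A(n,\epsilon)\leq s_A(n,\epsilon)$ by the same maximality argument. For $s_A(n,\epsilon)\leq N(\bigvee_{i=1}^n f^{-a_i}(\calv))$, the point is that $\calv$ is chosen fine enough that any two points lying in a common element of $\calv$ are within distance $\epsilon$. Pick a minimal subcover $\calw$ of $\bigvee_{i=1}^n f^{-a_i}(\calv)$, whose elements have the form $W=\bigcap_{i=1}^n f^{-a_i}(V_i)$ with $V_i\in\calv$. If $F\subset X$ is any $(A,n,\epsilon)$-separated set and two distinct points $x,y\in F$ belonged to the same $W\in\calw$, then $f^{a_i}(x),f^{a_i}(y)\in V_i$ for every $i$, contradicting the separation requirement. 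Hence $\#(F)\leq\#(\calw)=N(\bigvee_{i=1}^n f^{-a_i}(\calv))$, and taking the supremum over $F$ and then over increasing sequences gives the bound.

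No step is genuinely difficult; the only mild subtlety is the phrasing of the hypothesis on $\calv$ in part (2) — for the argument above to work one needs each element of $\calv$ itself to have diameter less than $\epsilon$, which is the standard way this bound is set up. If instead one reads the statement literally as ``Lebesgue number less than $\epsilon$,'' I would replace $\calv$ by a refinement (or apply the Lebesgue number to deduce the diameter bound) so that elements of the cover have diameter at most $\epsilon$, keeping $N(\bigvee_{i=1}^n f^{-a_i}(\calv))$ unchanged up to this refinement. With that minor adjustment, all four inequalities are immediate.
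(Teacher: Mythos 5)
Your proof is correct and is exactly the standard argument: the paper gives no proof of this lemma at all, citing Walters (Theorem 7.7) instead, and your write-up is that proof with the indices $0,1,\dotsc,n-1$ replaced by $a_1,\dotsc,a_n$. You are also right to flag the hypothesis in part (2): as literally stated (``a Lebesgue number less than $\epsilon$'') it is vacuous, since any Lebesgue number can be decreased, and the intended hypothesis is the one in Walters, namely that every element of $\calv$ has diameter less than $\epsilon$; your main argument uses precisely this and is fine. The only slip is your fallback remark about passing to a refinement of $\calv$: refining a cover can only increase $N\bigl(\bigvee_{i=1}^n f^{-a_i}(\calv)\bigr)$, so that manoeuvre would not recover the inequality for the original $\calv$ --- the correct repair is to read the hypothesis as the diameter bound, as you do in the body of the argument.
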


By the definition of maximal pattern entropy and the above lemma, we have the following remark.
\begin{rem}\label{prop:poly-order-2}
Let $(X,f)$ be a topological dynamical system.
Then 
\[
\htop^*(X,f)=\lim_{\epsilon\rightarrow 0} \limsup_{n\to\infty}\frac{1}{n} \log r^*(n,\epsilon)=\lim_{\epsilon\rightarrow 0} \limsup_{n\to\infty}\frac{1}{n} \log s^*(n,\epsilon),
\]
and the following statements are equivalent:
\begin{enumerate}
	\item $p^*_{X,\calu}$ is of polynomial order for all $\calu\in\calc_X^o$;
	\item $r^*(\,\cdot\,,\epsilon)$ is of polynomial order for all $\epsilon>0$;
	\item $s^*(\,\cdot\,,\epsilon)$ is of polynomial order for all $\epsilon>0$.
\end{enumerate}
\end{rem}

\begin{lem}\label{lem:f-null-F}
Let $f\in \css$, $\htop(f)=0$ and $\fix(f)\neq\emptyset$.
Denote $F$ and $I$ provided by Lemma~\ref{lem:I-extension}.
Then $(\bbs,f)$ is null if and only if $(I,F)$ is null.	
\end{lem}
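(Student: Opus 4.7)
The plan is to prove the two directions separately. The easy direction, $(I,F)$ null $\Rightarrow$ $(\bbs,f)$ null, will follow from a semi-conjugacy. Since $|I|\geq 1$, the natural projection restricted to $I$, $e|_I\colon I\to \bbs$, is onto; and $e\circ F=f\circ e$ gives $e|_I\circ F|_I=f\circ e|_I$, so $e|_I$ is a semi-conjugacy from $(I,F|_I)$ to $(\bbs,f)$. By Lemma~\ref{lem: conjugate and entropy}, $\htop^{\infty}(f)\leq \htop^{\infty}(F|_I)$, which immediately gives this direction. I also note for later use that Lemma~\ref{lift-entropy} forces $\htop(F|_I)\leq \htop(f)=0$, so $(I,F|_I)$ is an interval map of zero topological entropy and Theorem~\ref{thm:interval-NS-IN-IT} applies.

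For the harder direction, $(\bbs,f)$ null $\Rightarrow$ $(I,F)$ null, I would run the IN/IT/NS machinery in a chain. Suppose $(\bbs,f)$ is null. By Theorem~\ref{thm:null-IN}, $IN(\bbs,f)\subset\Delta$. By Theorem~\ref{thm:IN=IT=NS} (applicable because $\htop(f)=0$), this forces $NS(\bbs,f)\setminus \Delta=\emptyset$; and since non-separable pairs are non-diagonal by Definition~\ref{defn: non-sep-1}, in fact $NS(\bbs,f)=\emptyset$. Applying Lemma~\ref{lem:non-sep-e} (whose hypotheses hold), $e\times e(NS(I,F))=NS(\bbs,f)=\emptyset$, so $NS(I,F)=\emptyset$. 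Then Theorem~\ref{thm:interval-NS-IN-IT} applied to the zero-entropy interval system $(I,F|_I)$ gives $IN(I,F)\setminus \Delta=NS(I,F)\setminus \Delta=\emptyset$, i.e.\ every IN-pair of $(I,F)$ is diagonal. One more invocation of Theorem~\ref{thm:null-IN} yields that $(I,F)$ is null.

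There is essentially no analytic difficulty here; everything reduces to checking that the hypotheses of each of the three ``big'' inputs (Theorem~\ref{thm:null-IN}, Theorem~\ref{thm:IN=IT=NS}, Theorem~\ref{thm:interval-NS-IN-IT}, together with Lemma~\ref{lem:non-sep-e}) are genuinely available at each step. The only point that requires a brief sanity check is that $NS(I,F)=\emptyset$ transfers cleanly to $IN(I,F)\subset\Delta$: this uses that in the statement of Theorem~\ref{thm:interval-NS-IN-IT}, the equality is off the diagonal, and that non-separable pairs are by definition already off the diagonal, so there is no ambiguity. Thus the main (mild) obstacle is bookkeeping the diagonal/off-diagonal distinction in the various equivalences.
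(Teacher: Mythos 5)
Your proof is correct, and the first three links of the chain in the hard direction (null $\Rightarrow$ no non-diagonal IN-pairs of $f$ by Theorem~\ref{thm:null-IN} $\Rightarrow$ no non-separable pairs of $f$ by Theorem~\ref{thm:IN=IT=NS} $\Rightarrow$ no non-separable pairs of $F$ by Lemma~\ref{lem:non-sep-e}) coincide exactly with the paper's argument, as does the easy direction via the semi-conjugacy $e|_I$ and Lemma~\ref{lem: conjugate and entropy}. Where you diverge is the final step: the paper concludes from $NS(I,F)=\emptyset$ that $F|_I$ is not Li-Yorke chaotic and then invokes the equivalence of Li-Yorke chaos with non-nullness, whereas you close the loop by applying Theorem~\ref{thm:interval-NS-IN-IT} to the zero-entropy interval system $(I,F|_I)$ to get $IN(I,F)\subset\Delta$ and then using Theorem~\ref{thm:null-IN} a second time. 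Your route is arguably tidier: it stays entirely within results that are formally stated in the paper, while the paper's last step implicitly uses Smital's theorem (that for zero-entropy interval maps, Li-Yorke chaos is equivalent to the existence of a non-separable pair), which is only mentioned in the introduction, and then applies Lemma~\ref{lem:f-entropy-no-period-point} --- stated for circle maps --- to the interval map $F|_I$, where one really wants the Franzova--Smital interval version. Your bookkeeping of the diagonal (non-separable pairs are off-diagonal by definition, so $NS\setminus\Delta=NS$) is also the right sanity check and is handled correctly.
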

\begin{proof}
	$(\Leftarrow)$
	If $(I,F)$ is null, by lemma~\ref{lem: conjugate and entropy},
	$(\bbs,f)$ is null.
	
	$(\Rightarrow)$
	If $(\bbs,f)$ is null, 
	by lemma~\ref{thm:null-IN},
	$f$ has no non-diagonal IN-pairs.
	By lemma~\ref{thm:IN=IT=NS}, $f$ has no non-separable pairs.
	By lemma~\ref{lem:non-sep-e}, $F$ has no non-separable pairs, thus $F$ is not Li-Yorke chaotic and by lemma~\ref{lem:f-entropy-no-period-point} $F$ is null.
\end{proof}

We will also need the following result.

\begin{thm}[\cite{Li J 2011}] \label{thm:interval-null-poly}
Let $f\in C(I,I)$. 
Then
	$f$ is null if and only if for any $\calu\in\calc_X^o$, 
	$p^*_{X,\calu}(n)$ is of polynomial order.
\end{thm}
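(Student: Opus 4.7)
The implication $(\Leftarrow)$ is immediate: if $p^*_{I,\calu}(n)$ grows polynomially for every $\calu\in\calc_I^o$, then $\htop^*(f,\calu)=\lim_n\tfrac{1}{n}\log p^*_{I,\calu}(n)=0$ for every $\calu$, so $\htop^\infty(f)=\htop^*(f)=0$.

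For $(\Rightarrow)$, assume $f$ is null. By Remark~\ref{prop:poly-order-2} it suffices to prove that for every $\epsilon>0$ the function $n\mapsto s^*(n,\epsilon)$ is of polynomial order. My plan is to combine the structural consequences of nullness (no non-separable pairs) with a compactness/absorption argument and a direct combinatorial count of separated orbits. The first step extracts structure: since $\htop(f)\leq\htop^\infty(f)=0$, Theorem~\ref{thm:null-IN} gives that every IN-pair is diagonal, and Theorem~\ref{thm:interval-NS-IN-IT} then forces $f$ to have no non-diagonal non-separable pairs. Via Lemma~\ref{lem:non-sep}, this means that for every $z\in I$ with infinite $\omega(z,f)$ and every distinct pair $x,y\in\omega(z,f)$, the nested cycles of intervals supplied by Lemma~\ref{rem: period portion} (with periods strictly increasing powers of $2$, by Theorem~\ref{thm:interval-entropy-0}) must eventually place $x$ and $y$ in different components.

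The second step upgrades the pointwise separation to a uniform statement by compactness of $I$: for each $\epsilon>0$ I would produce finitely many cycles of intervals $\mathcal{J}_1,\ldots,\mathcal{J}_N$ with periods $m_1,\ldots,m_N$ (all powers of $2$), each component of diameter less than $\epsilon$, together with an absorption time $T=T(\epsilon)$, such that $f^T(I)\subset B_\epsilon:=\bigcup_{i=1}^N\bigcup_{j=0}^{m_i-1}f^j(J_0^{(i)})$ and $f(B_\epsilon)\subset B_\epsilon$.

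Granting this uniform absorbing structure, a direct count of $(A,n,2\epsilon)$-separated points for an arbitrary sequence $A=\{a_1<a_2<\cdots\}$ splits into two parts. For times $a_i<T$, the partition of $I$ induced by the preimages $f^{-k}(B_\epsilon)$ for $k=0,1,\ldots,T$ is finite, so only finitely many pre-absorption behaviors occur. For times $a_i\geq T$ the orbit lies in $B_\epsilon$, and the component of $f^{a_i}(x)$ is completely determined by the component of $f^T(x)$ together with the cyclic permutation of components inside the relevant $\mathcal{J}_j$. Summing both contributions gives a bound depending only on $\epsilon$ (in fact constant in $n$, hence polynomial), independent of the chosen sequence $A$.

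The principal obstacle will be the uniform absorption in step two: the pointwise shrinking from step one does not immediately produce a single finite family $\{\mathcal{J}_i\}$ and a single time $T$ valid on all of $I$. I would argue by contradiction, noting that a failure would yield a sequence $(x_k)$ whose forward orbits avoid every $\epsilon$-small cycle for $k$ steps; any accumulation point of $(x_k)$ would then carry an $\omega$-limit set whose periodic portion stabilizes with components of size $\geq\epsilon$, producing a non-separable pair and contradicting step one. The Sharkovsky restriction that all periods are powers of $2$ is crucial here, because it ensures compatibility (nesting) of cycles across levels and prevents an explosion of incomparable cycles at small scales, so finitely many truly suffice.
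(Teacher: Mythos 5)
Before the comparison, note that the paper does not actually prove this statement: it is quoted from \cite{Li J 2011} and used as a black box in Section 5. So there is no in-paper proof to match your argument against; I can only assess the proposal on its own terms. Your backward implication is correct and is the standard one. The forward implication, however, has a genuine gap at your step two, the ``uniform absorption'' claim.

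The claim that for every $\epsilon>0$ there exist finitely many cycles of intervals, \emph{all of whose components have diameter less than $\epsilon$}, together with a time $T$ such that $f^T(I)\subset B_\epsilon$, is false in general, for two separate reasons. First, points attracted to a periodic orbit from outside: for $f(x)=x/2$ on $[0,1]$, which is null, the only set $J$ with $f^n(J)=J$ is $\{0\}$, so there are no non-degenerate cycles of intervals at all, and $f^T(I)=[0,2^{-T}]$ is never contained in a finite union of (necessarily degenerate) periodic intervals. Your compactness/contradiction argument breaks down exactly here: an accumulation point of the putative bad sequence may well have a \emph{finite} $\omega$-limit set, and finite $\omega$-limit sets never produce non-separable pairs, so no contradiction is reached. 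Second, even on an infinite $\omega$-limit set, the absence of non-separable pairs (via Theorem~\ref{thm:null-IN}, Theorem~\ref{thm:interval-NS-IN-IT} and Lemma~\ref{lem:non-sep}) only guarantees that distinct points of $\omega(z,f)$ eventually lie in \emph{different} components of the nested cycles from Lemma~\ref{rem: period portion}; it does not force the components themselves to have diameter less than $\epsilon$. A component can meet $\omega(z,f)$ in a single point and still have length bounded below for all $n$; at each level at most $|I|/\epsilon$ components are long, but those finitely many can persist forever. This is precisely the wandering/contiguous-interval phenomenon that the paper has to handle in Case 2 of the proof of Theorem~\ref{thm:main-result2} by cutting each long piece into $N$ segments and coding orbits by segments; your counting, which assumes every component is already $\epsilon$-small, silently skips this. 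A correct proof must treat both issues separately (monotone convergence into periodic orbits for the finite-$\omega$-limit-set part, and a segment-cutting count for the finitely many persistent long components), which is essentially what the cited proof in \cite{Li J 2011} does. A minor further point: your parenthetical claim that the resulting bound is constant in $n$ is too strong --- already the Denjoy-type count in Section 5 is linear in $n$ --- though only polynomial growth is needed.
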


Now we are ready to prove our second main result.

\begin{proof}[Proof of Theorem~\ref{thm:main-result2}]
The sufficiency of the proof is clear, we only proof the necessity.
First we assume that $\per(f)\neq\emptyset$.
Then there exists $p\in\bbn$ such that $\fix(f^p)\neq\emptyset$.
Let $F$ and $I$ provided by Lemma~\ref{lem:I-extension} for $f^p$.
By Lemma~\ref{lem:f-null-F}, $(I,F)$ is null.
Then by Lemma~\ref{thm:interval-null-poly}, $p^*_{I,\calu}(n)$ is of polynomial order for all $\calu\in\calc_I^o$.
As $e\colon (I,F)\to (\bbs,f^p)$ is a factor map,
by Lemma~\ref{lem:F-poly-f}(2), $p^*_{\bbs,\calu}$ with respect to $f^p$ is of polynomial order for all $\calu\in\calc_\bbs^o$.
Thus by Lemma~\ref{lem:F-poly-f}(1),  $p^*_{\bbs,\calu}$ with respect to $f$ is of polynomial order for all $\calu\in\calc_\bbs^o$.

Now we consider the situation when $\per(f)=\emptyset$. We divide the proof into two cases.

\textbf{Case 1}: $f$ is homeomorphism.

Fix $\epsilon>0$.
Let $A=\left\lbrace a_1,  a_2, \cdots \right\rbrace $ be a  sequence of non-negative integers
and $F=\left\lbrace x_1,  x_2, \cdots , x_k \right\rbrace $ be a
$(A,n, \epsilon)$-separated set of $(\bbs,f)$. 
Without loss of generality, we assume that 
$x_1<x_2<\cdots <x_k<x_1 $ under the ordering of $\bbs$ in Definition~\ref{defn:order of circle}.
Obviously in this case $f$ is an orientation preserving and  $ f^{a_i}(x_1)< f^{a_i}(x_2)< \cdots < f^{a_i}(x_k)<f^{a_i}(x_1)$ under the ordering of $\bbs$ for any $ i\in\{1,2,\cdots, n\}$.
Therefore for each $i$,
\[\sum_{j=1}^{k-1} d(f^{a_i}(x_j), f^{a_i}(x_{j+1}))+ d(f^{a_i}(x_k), f^{a_i}(x_1))\leq 1\] under the metric $d$ which is defined  in Definition~\ref{defn:metric of circle}.  
Since $F=\left\lbrace x_1,  x_2, \cdots , x_k \right\rbrace $ is $(A,n, \epsilon)$-separated, for any pair 
$\left\lbrace x_j, x_{j+1} \right\rbrace $ or
$\left\lbrace x_k, x_1 \right\rbrace $, where $j\in \left\lbrace 1,2,\cdots, k-1 \right\rbrace$,
there exists some 
$i\in \left\lbrace 1,2,\cdots, n \right\rbrace $
such that 
$d(f^{a_i}(x_j),f^{a_i}(x_{j+1}))>\epsilon$, where $j\in \left\lbrace 1,2,\cdots, k \right\rbrace$ and $x_{k+1}=x_1$.
Therefore the number of  the pairs $\left\lbrace x_j, x_{j+1} \right\rbrace $
cannot exceed $n(\left[ \frac{1}{\epsilon}\right]+1) $ since if the number of the pairs exceeds $n(\left[ \frac{1}{\epsilon}\right]+1) $, then there exists some $i$
such that the number of pairs $\left\lbrace f^{a_i}(x_j),f^{a_i}(x_{j+1})\right\rbrace $ which satisfy that 
$d(f^{a_i}(x_j),f^{a_i}(x_{j+1}))>\epsilon$ exceeds $\left[ \frac{1}{\epsilon}\right]+1$,
thus \[\sum_{j=1}^{k-1} d(f^{a_i}(x_j), f^{a_i}(x_{j+1}))+ d(f^{a_i}(x_k), f^{a_i}(x_1))> 1,\] which is a contradiction.
Therefore, $k\leq n(\left[ \frac{1}{\epsilon}\right]+1)$ 
and then $s_A(n,\epsilon)\leq n(\left[ \frac{1}{\epsilon}\right]+1)$ since $F$ is arbitrary.
Moreover, $s^*(n,\epsilon)\leq n(\left[ \frac{1}{\epsilon}\right]+1)$ since the upper bound $n(\left[ \frac{1}{\epsilon}\right]+1)$ does not depend on the sequence $A$, that is $s^*(\,\cdot\,,\epsilon)$ is of polynomial order.
As $\epsilon$ is arbitrary, by Proposition~\ref{prop:poly-order-2},
$p^*_{\bbs,\calu}$ 
is of polynomial order for all $\calu\in \calc_\bbs^o$.

\textbf{Case 2}: $f$ is not homeomorphism.

By Lemma~\ref{lem:not-homermorphism and nowhere dense limit set},  
$W=\omega(x,f)$ is the only $\omega$-limit set of $\bbs$ and it is a nowhere dense perfect set. Moreover, all the contiguous intervals are wandering and the image of any contiguous interval is either a contiguous interval or a point from $W$. By the properties of $\omega$-limit set, $f(W)=W$.	
	
Fix $\epsilon>0$. Let $A=\left\lbrace a_1,  a_2, \cdots \right\rbrace $ be a sequence of non-negative integers, $I_1, I_2, \cdots, I_k$ be all contiguous intervals longer than $\frac{\epsilon}{2}$ and $E_{W}$ be a $(A,f|_W,n,\frac{\epsilon}{2})$-spanning set. 

First we consider the points whose $(A,f,n)$-trajectory is disjoint from $\bigcup_{i=1}^{k}I_i$ and we take $x$ as such a point. If $x\in W$ then 
$x$ is $(A,f,n,\epsilon)$-spanned by $E_W$. If $x\not\in W$ we put $y$ to be an endpoint of 
the contiguous interval which contains $x$. Then $d(f^{a_i}(x),f^{a_i}(y))\leq \frac{\epsilon}{2}$ for all $1\leq i\leq n$ since the $(A,f,n)$-trajectory of $x$ is in $\mathbb{S}\setminus \bigcup_{i=1}^k I_i$. Since $y\in W$, there is a point $z\in E_W$ such that $y$ is $(A,f,n,\epsilon)$-spanned by $z$, as a conclusion the set $E_W$ $(A,f,n,\epsilon)$-spans all such points $x$.

Now we consider the points whose trajectory meet $\bigcup_{i=1}^{k}I_i$. Fix $N\in\bbn$ such that $N>[\frac{1}{\epsilon}]$.
For $1\leq i\leq n$ and $1\leq j\leq k$,
denote $I(i,j)=\left\lbrace x\colon f^{a_i}(x)\in I_j \right\rbrace $. 
It is obvious that $I(i,j)$ is a contiguous interval. Consider its $(A,f,n)$-trajectory
$\left\lbrace f^{a_1}(I(i,j)), f^{a_2}(I(i,j)), \cdots, f^{a_n}(I(i,j))\right\rbrace $.
Each elements in this trajectory is either a contiguous interval or a point from $W$ and all the elements are pairwise disjoint since $I(i,j)$ is wandering.
At most $k$ of them have lengths greater than or equal to $\epsilon$
since $\left\lbrace  I_i\right\rbrace_{i=1}^k $ are all the contiguous intervals longer than $\frac{\epsilon}{2}$. Cut each of 
such elements to $N$ segments shorter than $\epsilon$. All of the other elements of the trajectory will be consider to be segments themselves. To each $x\in I(i,j)$ assign its 
code--the $n$-tuple $(S_1(x),S_2(x),\cdots, S_n(x))$ where $S_i(x)$ is the segment containing $f^{a_i}(x)$. We have at most $N^k$ different codes and all points with the same code can be $(A,f,n,\epsilon)$--spanned by one point. Therefore there is a set with cardinality at most $N^k$ which $(A,f,n,\epsilon)$--spans $I(i,j)$.

By Lemma~\ref{lem:not-homermorphism and nowhere dense limit set}, $f|_W$ is an orientation preserving, moreover by linear extension of $f|_W$ we obtain a homeomorphism $g\in \css$. 
By the proof of \textbf{Case 1} when $f$ is a homeomorphism, we have 
\[r(f|_W,A,n,\epsilon/2)
=r(g|_W,A,n,\epsilon/2)\leq r(g,A,n,\epsilon/2)
\leq s(g,A,n,\epsilon/2) \leq n[\frac{2}{\epsilon}],\]
therefore $r(f,A,n,\epsilon)\leq r(f|_W,A,n,\epsilon/2)+n\cdot k\cdot N^k\leq n[\frac{2}{\epsilon}]+n\cdot k\cdot N^k$.
Moreover, since $N$ and $k$ are independent of the sequence $A$,
\[
r^*(f,n,\epsilon)=\sup_{A}r(f,A,n,\epsilon)\leq n[\frac{1}{\epsilon}] +n k N^k.
\] 
By Proposition~\ref{prop:poly-order-2},
$p^*_{\bbs,\calu}$ is of polynomial order for each open cover $\calu$.
\end{proof}

\section*{Acknowledgements} 
The author would like to thank Prof.\ Jian Li and Prof.\ Piotr Oprocha
for the careful reading and helpful suggestions. 
This work was supported in part by the National Natural Science Foundation of China (11771264) and 
Guangdong Natural Science Foundation (2018B030306024).


\begin{thebibliography}{100}
	\bibitem {Alseda 2001}
	L. Alseda, J. Llibre, M. Misiurewicz, \textit{Combinatorial dynamics and entropy in dimension one}, Second edition. Advanced Series in Nonlinear Dynamics, 5. World Scientific Publishing Co., Inc., River Edge, NJ, 2000. 
	
	\bibitem {J. Auslander}
	J. Auslander, Y. Katznelson, 
	\textit{Continuous maps of the circle without periodic points}, Israel J. Math. \textbf{32} (1979), no. 4, 375--381.
	
	

\bibitem{Blokh90a} A. M. Blokh,
\textit{Dynamical systems on one-dimensional branched manifolds I} (Russian),
Teor. Funktsi{\v\i} Funktsional. Anal. i Prilozhen. No. 46 (1986), 8--18; translation in
J. Soviet Math. \textbf{48} (1990), no. 5, 500--508.
	
	
	
	\bibitem {N. Franzova 1991}
	N. Franzova, J. Smital, \textit{Positive topological sequence entropy characterizes chaotic maps}, Proc. Amer. Math. Soc. \textbf{112} (1991), no. 4, 1083--1086.
	
	\bibitem{GY09} 
	E. Glasner, X. Ye, \emph{Local entropy theory}, Ergodic Theory Dynam. Systems \textbf{29} (2009), no. 2, 321--356.
	
	\bibitem {Boris Hasselblatt 2003}
	B. Hasselblatt, A. Katok, 
	\textit{A First Course in Dynamics: With a Panorama of Recent Developments},
	 Cambridge University Press, New York, 2003.
	
	\bibitem {R. Hric 2000}
	R. Hric, \textit{Topological sequence entropy for the maps of the circle},
	Comment.
	Math. Univ. Carolin. \textbf{41} (2000), no. 1, 53--59.
		
	\bibitem {Huang W Ye X}
	W. Huang, X. Ye, 
	\textit{Combinatorial lemmas and applications to dynamics}, Adv. Math. \textbf{220} (2009), no. 6, 1689--1716.

	
	\bibitem {Kerr 2007}
	D. Kerr, H. Li, 
	\textit{Independence in topological and $C^{*}$-dynamics},
	Math. Ann. \textbf{338} (2007), 869--926.
	

\bibitem {M. Kuchta 1990}
M. Kuchta, 
\textit{Characterization of chaos for continuous maps of the circle}, Comment. Math. Univ. Carolin. \textbf{31} (1990), no. 2, 383--390.

	
	\bibitem {M. Kuchta 1989}
	M. Kuchta, J. Smital, 
	\textit{Two-point scrambled set implies chaos},
	European Conference on Iteration Theory (Caldes
	de Malavella, 1987), 427--430, 
	World Sci. Publ., Teaneck, NJ, 1989.
	
	\bibitem{LM93} 
	J. Llibre, M. Misiurewicz, 
	\textit{Horseshoes, entropy and periods for graph maps}, 
	Topology \textbf{32} (1993), no. 3, 649--664. 

	\bibitem {Li J 2011}
	J. Li, 
	\textit{Chaos and Entropy for Interval Maps},
	 J. Dyn. Differ. Equ. \textbf{23} (2011), 333--352.
	
	\bibitem {Li J 2017}
	J. Li, P. Oprocha, Y. Yang, T. Zeng, 
	\textit{On dynamics of graph maps with zero topological entropy}, Nonlinearity \textbf{30} (2017), no. 12, 4260--4276.
	
	\bibitem{LOZ18}
	J. Li, P. Oprocha, G. Zhang,
	\textit{On dynamics of quasi-graph maps},
	preprint, arXiv:1809.05617.
	
	\bibitem{LY16} J. Li, X. Ye, \textit{Recent development of chaos theory in topological dynamics}, 
	Acta Math. Sin. (Engl. Ser.) \textbf{32} (2016), no. 1, 83--114.
	
	\bibitem{LT75} T. Li, J. Yorke, \textit{Period three implies chaos}, Am. Math. Monthly \textbf{82} (1975), no. 10, 985--992.
 
	

	
	\bibitem {Mai J 1997}
	J. Mai, 
	\textit{Some dynamical system properties of self-maps of the circle and conditions equivalent to them}, Adv. in Math. (China) \textbf{26} (1997), no. 3, 193--210.
	
	
	\bibitem{R17}
	S. Ruette, \textit{Chaos on the interval}, University Lecture Series, 67. American Mathematical Society, Providence, RI, 2017. 
	
	\bibitem {S. Ruette 2014}
	S. Ruette, L. Snoha, 
	\textit{For graph maps, one scrambled pair implies Li-Yorke chaos}, Proc. Amer. Math. Soc. \textbf{142} (2014), no. 6, 2087--2100.
	
	\bibitem {Smital 1986}
	J. Smital, \textit{Chaotic functions with zero topological entropy}, Trans. Amer. Math. Soc. \textbf{297} (1986), no. 1, 269--282.

\bibitem{TYZ10} F. Tan, X. Ye and R. Zhang,
\textit{The set of sequence entropies for a given space},
Nonlinearity, \textbf{23} (2010), 159--178.

\bibitem{W82} P. Walters,  \textit{An introduction to ergodic theory}, Graduate Texts in Mathematics, \textbf{79}(1982). Springer-Verlag, New York-Berlin.

 
	
\end{thebibliography}
\end{document}